\newtheorem{theorem}{Theorem}[section]
\newtheorem{lemma}[theorem]{Lemma}
\newtheorem{corollary}[theorem]{Corollary}
\newtheorem{proposition}[theorem]{Proposition}
\theoremstyle{definition}
\newtheorem{definition}[theorem]{Definition}
\theoremstyle{remark}
\newtheorem{remark}[theorem]{Remark}
\newtheorem*{ack}{Acknowledgements}
\newcommand{\be}{\begin{equation}}
\newcommand{\ee}{\end{equation}}
\newcommand{\ben}{\begin{equation*}}
\newcommand{\een}{\end{equation*}}
\newcommand{\bes}{\begin{eqnarray}}
\newcommand{\ees}{\end{eqnarray}}
\newcommand{\besn}{\begin{eqnarray*}}
\newcommand{\eesn}{\end{eqnarray*}}
\newcommand{\txt}{\textrm}
\newcommand{\mres}{\mathbin{\vrule height 1.6ex depth 0pt width
0.13ex\vrule height 0.13ex depth 0pt width 0.7ex}}
\def\Xint#1{\mathchoice
{\XXint\displaystyle\textstyle{#1}}%
{\XXint\textstyle\scriptstyle{#1}}%
{\XXint\scriptstyle\scriptscriptstyle{#1}}%
{\XXint\scriptscriptstyle\scriptscriptstyle{#1}}%
\!\int}
\def\XXint#1#2#3{{\setbox0=\hbox{$#1{#2#3}{\int}$ }
\vcenter{\hbox{$#2#3$ }}\kern-.57\wd0}}
\def\dashint{\Xint-}
\numberwithin{equation}{section}
\numberwithin{figure}{section}
\begin{document}
\title[Geometry of Rectifiable Sets with Carleson and Poincar\'{e} Conditions]{On the Geometry of Rectifiable Sets with Carleson and Poincar\'{e}-type Conditions}

\author{Jessica Merhej}
\thanks{The author was partially supported by NSF DMS-0856687 and DMS-1361823 grants.}
\keywords{Rectifiable set, Carleson-type condition, Poincar\'{e}-type condition, Ahlfors regular, bi-Lipschitz image, quasiconvexity}
\address{Department of Mathematics\\ University of Washington\\ Box 354350\\ Seattle, WA 98195}
\email{jem05@uw.edu, j.e.merhej@gmail.com}
\date{October 16, 2015}
\begin{abstract}
A central question in geometric measure theory is whether geometric properties of a set translate into analytical ones. In 1960, E. R. Reifenberg proved that if a closed subset $M$ of $\mathbb{R}^{n+k}$ is well approximated by $n$-planes at every point and at every scale, then $M$ is a locally bi-H\"older image of an $n$-plane. Since then, Reifenberg's theorem has been refined in several ways in order to ensure that $M$ is a bi-Lipschitz image of an $n$-plane. In this paper, we consider an $n$-Ahlfors regular rectifiable subset $M$ of $\mathbb{R}^{n+1}$ that satisfies a  Poincar\'{e}-type inequality. Then, we show that a Carleson-type condition on the oscillation of the unit normal of $M$ is sufficient to prove that $M$ is contained inside a bi-Lipschitz image of an $n$-plane. We also show that the Poincar\'{e}-type inequality encodes geometrical information about $M$; namely it implies that $M$ is quasiconvex. \end{abstract}

\maketitle

\setcounter{tocdepth}{1}
\tableofcontents

\section{Introduction} \label{intro}
The Plateau problem has played a fundamental role in the development of geometric measure theory and geometric analysis. In dimension two, it was solved (independently) by Douglas and Rad\'{o} (see \cite{Ra} and \cite{Do}) in 1930. It took time to make sense of the question in higher dimensions. Reifenberg \cite{Re} approached the question of regularity for solutions to the Plateau problem in 1960. His initial tool was the \emph{topological disk theorem}. In recent years, there has been renewed interest in this result and its proof. Roughly speaking, the topological disk theorem states that if an $n$-dimensional subset $M$ of $\mathbb{R}^{n+k}$ is well approximated by an $n$-plane at every point and at every scale, then locally, $M$ is a bi-H\"older  image of the unit ball in $\mathbb{R}^{n}$. To be more precise, we state the theorem here:

\begin{theorem} \label{TDT} (Topological Disk Theorem) \label{th1} \cite{Re} \cite{DT1}
For all choices of integers $n>0$ and $k>0$,  and $0 < \tau < 10^{-1}$, we can find $\epsilon>0$ such that the following holds: Let $M \subset \mathbb{R}^{n+k}$ be a closed set that contains the origin, and suppose that for $x \in M \cap B_{10}(0)$  and $0 < r \leq 10$ we can find an $n$-dimensional affine subspace $P(x,r)$ of $\mathbb{R}^{n+k}$ that contains $x$ such that 

\begin{equation} \label{eq1} 
\txt{dist}(y, P(x,r)) \leq \epsilon r \quad \txt{for} \,\,y \in M \cap B_{r}(x),
\end{equation} 
and 
\begin{equation} \label{eq2} 
\txt{dist}(y, M) \leq \epsilon r \quad \txt{for} \,\, y \in P(x,r) \cap B_{r}(x).
\end{equation} 

Then, there exists a bijective mapping $g: \mathbb{R}^{n+k} \rightarrow \mathbb{R}^{n+k}$ such that 
\begin{equation} \label{eq3} 
|g(x) - x| \leq \tau \quad \txt{for} \, \,x \in \mathbb{R}^{n+k},
\end{equation}

\begin{equation} \label{eq4} \frac{1}{4} |x-y|^{1+\tau} \leq |g(x) - g(y)| \leq 3 |x-y|^{1-\tau}, \end{equation}
for $x, \, y \in \mathbb{R}^{n+k}$ such that $|x-y| \leq 1$, and if we set $P = P(0,10)$, 
\begin{equation} \label{eq5} M \cap B_{1}(0) = g(P) \cap B_{1}(0).\end{equation}
\end{theorem}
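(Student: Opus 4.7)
The plan is to construct $g$ as an infinite composition of small dyadic-scale diffeomorphisms, following Reifenberg's original strategy in the streamlined form of David and Toro \cite{DT1}. Concretely, I set
\[
g=\lim_{N\to\infty}\sigma_N\circ\sigma_{N-1}\circ\cdots\circ\sigma_0,
\]
where each $\sigma_\ell\colon\mathbb{R}^{n+k}\to\mathbb{R}^{n+k}$ is a diffeomorphism that differs from the identity only in a $2^{-\ell}$-neighbourhood of $M$ and is designed so that $\Sigma_\ell:=\sigma_\ell\circ\cdots\circ\sigma_0(P)$, with $P=P(0,10)$, approximates $M$ in Hausdorff distance at scale $O(\epsilon\, 2^{-\ell})$ inside $B_{10}(0)$.

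The first step is to extract a coherent family of planes. At each dyadic scale $\ell\ge 0$, I fix a maximal $2^{-\ell}$-separated net $\{x_{j,\ell}\}_j\subset M\cap B_{20}(0)$ and the associated approximating planes $P_{j,\ell}:=P(x_{j,\ell},2^{-\ell+1})$ furnished by the hypothesis. The key geometric input is that the two-sided conditions \eqref{eq1}--\eqref{eq2}, applied at nearby centres and comparable scales, force the angle between any two planes $P_{j,\ell}$ and $P_{i,\ell+1}$ whose supporting balls overlap to be $O(\epsilon)$. Picking a smooth partition of unity $\{\psi_{j,\ell}\}_j$ subordinate to the cover $\{B(x_{j,\ell},C\, 2^{-\ell})\}_j$, I define $\sigma_\ell$ as a weighted average of orthogonal projections onto the $P_{j,\ell+1}$, truncated to the identity outside a tubular neighbourhood of $\Sigma_\ell$. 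Standard bump-function estimates, together with the small-angle bound, then yield $|\sigma_\ell(x)-x|\le C\epsilon\, 2^{-\ell}$ and $\|D\sigma_\ell-\mathrm{Id}\|\le C\epsilon$ for all $x$.

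With these bounds, setting $g_N=\sigma_N\circ\cdots\circ\sigma_0$, telescoping gives $\|g_N-g_{N-1}\|_\infty\lesssim\epsilon\, 2^{-N}$ on any compact set, hence uniform convergence to a continuous $g$ with $|g(x)-x|\le C\epsilon\le\tau$ after shrinking $\epsilon$, which handles \eqref{eq3}. For the bi-Hölder estimate \eqref{eq4} with $|x-y|=r\le 1$, only the scales $2^{-\ell}\gtrsim r$ contribute meaningful distortion, since at finer scales the diffeomorphisms act essentially as the identity on the segment between $x$ and $y$. Multiplying the $O(\log_2(1/r))$ relevant factors of $(1+C\epsilon)$ produces a total distortion of order $r^{-C\epsilon/\log 2}$, so choosing $\epsilon$ with $C\epsilon/\log 2\le\tau$ secures both the upper and lower Hölder bounds simultaneously (the lower bound uses the identical estimate for the inverses $\sigma_\ell^{-1}$). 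The identification \eqref{eq5} splits into two inclusions: $g(P)\cap B_1\subset M$ follows because $g_N(P)$ lies within $C\epsilon\, 2^{-N}$ of $M$ by construction, while the reverse inclusion uses \eqref{eq2} through a Brouwer-degree argument to rule out holes in $g(P)$.

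The main obstacle is the quantitative bookkeeping that matches $\epsilon$ to $\tau$: the small-angle estimate between consecutive planes, the derivative bounds for the bump functions, and the product of Lipschitz factors over dyadic scales each contribute absolute constants that must be absorbed uniformly into a single $\epsilon$ depending only on $\tau$, $n$, and $k$. A more subtle point, and the reason the statement is stronger than a mere Hölder approximation result, is the surjectivity in \eqref{eq5}: without the two-sided hypothesis \eqref{eq2}, the image $g(P)$ could conceivably miss portions of $M$, so one must exploit the ``thickness'' encoded in \eqref{eq2} to produce a preimage in $P$ for every point of $M\cap B_1(0)$.
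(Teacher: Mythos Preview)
The paper does not actually prove Theorem~\ref{TDT}; it is stated with attribution to Reifenberg~\cite{Re} and David--Toro~\cite{DT1} as a background result, and no proof appears anywhere in the manuscript. So there is no ``paper's own proof'' to compare against.

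That said, your outline is a faithful sketch of the construction in~\cite{DT1} (which itself streamlines Reifenberg's original argument): maximal nets at dyadic scales, small-angle compatibility of approximating planes forced by the two-sided flatness hypothesis, partition-of-unity averages of projections to build the $\sigma_\ell$, telescoping for the $C^0$ bound, and a product of $(1+C\epsilon)$ Lipschitz factors over $\log(1/r)$ scales for the bi-H\"older estimate. Two minor cosmetic points: David--Toro use scales $10^{-\ell}$ rather than $2^{-\ell}$, and the surjectivity in \eqref{eq5} is handled in~\cite{DT1} not via a Brouwer-degree argument per se but by showing directly that each partial composition $f_k$ restricted to $P$ is a bi-H\"older homeomorphism onto a Reifenberg-flat surface that stays $C\epsilon r_k$-close to $M$, so the limit surface coincides with $M$ inside $B_1(0)$; the two-sided condition \eqref{eq2} is indeed what prevents holes, as you note. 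As a high-level proof plan your proposal is correct and matches the cited source.
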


A set satisfying inequalities (\ref{eq1}) and (\ref{eq2}) is said to be an $\epsilon$-\emph{Reifenberg flat} set and the map $g$ constructed in the theorem above is called a \emph{Reifenberg parametrization} of $M$. Semmes \cite{Se1, Se2} uses a Reifenberg-type parametrization to get good parametrizations of chord arc surfaces with small constant. David, De Pauw, and Toro \cite{DDT} give a generalization of Reifenberg's theorem in $\mathbb{R}^{3}$. The works by David \cite{Da1,Da2}, partially generalizing Taylor's \cite{Ta} results rely on the Reifenberg-type parametrization constructed in \cite{DDT}. In \cite{To2}, Toro refines Reifenberg's condition in order to guarantee the existence of better parametrizations, and so do David and Toro in \cite{DT1}. David and Toro \cite{DT2} also use Reifenberg-type parametrization to get snowflake-like embeddings of flat metric spaces, a work related to the results Cheeger and Colding \cite{CC} who use a Reifenberg-type parametrization to parametrize the limits of manifolds with Ricci curvature bounded from below. Colding and Naber improve this latter result in \cite{CN}. Moreover, Naber and Valtorta \cite{NV1, NV2} use a variation of Reifenberg's parametrization to study the regularity of stationary and minimizing harmonic maps.\\

A question which motivated many of the papers mentioned above, is whether the map $g$ in Theorem \ref{TDT} is $K$-bi-Lipschitz, that is whether there exists a constant $K \geq 1$, such that for all $x, \,y \, \in \mathbb{R}^{n+k}$, we have
\begin{equation} \label{bilip} K^{-1} \, |x-y| \leq |g(x) - g(y)|\leq K \, |x-y|. \end{equation}
Notice that in Theorem \ref{TDT}, the smaller $\epsilon$ is, the closer the bi-H\"older  exponent is to 1, that is, the closer the map $g$ is to being bi-Lipschitz. Also, it is known that any Lipschitz domain with sufficiently small Lipschitz constant is Reifenberg flat, for a suitable choice of $\epsilon$ depending on the Lipschitz constant. However, the converse is not true in general. In fact, the Von Koch snowflake (with sufficiently small angle) is an example of a Reifenberg flat set which is not Lipschitz (see \cite{To1}). Finding bi-Lipschitz parametrizations of sets is a central question in areas of geometry and metric analysis. Bi-Lipschitz functions in metric spaces play the role played by diffeomorphisms in smooth manifolds (and Lipschitz functions play the role played by smooth functions). Moreover, many concepts in metric analysis, for instance metric dimensions, are invariant under bi-Lipschitz mappings. Another example where Lipschitz and bi-Lipschitz mappings are of utmost importance is the theory of rectifiability in geometric measure theory. An $n$-dimensional rectifiable subset of $\mathbb{R}^{n+k}$, up to a set of measure zero, is a set contained in a countable union of Lipschitz images of $\mathbb{R}^{n}$. Rectifiable sets are a measure theoretic generalization of smooth surfaces that provide the appropriate setting to study geometric variational problems. For a set to be rectifiable, it does not necessarily have to be smooth, but it inherits some characteristics of smooth surfaces. In particular, rectifiable sets are characterized by having \emph{approximate tangent planes} almost everywhere.  Moreover, if an Ahlfors regular set is a Lipschitz or bi-Lipschitz image of $\mathbb{R}^{n}$ in the ambient space $\mathbb{R}^{n+k}$ for some $k \geq 1$, then the set is uniformly rectifiable, where the latter is a quantitative version of rectifiability.\\

So, it is very interesting to know what conditions guarantee that the map $g$ in Theorem \ref{TDT} is bi-Lipschitz. David and Toro \cite{DT1} give several results, each providing sufficient conditions on the set $M$ so that $g$ is bi-Lipschitz. One of the conditions involves the Jones numbers 

\begin{equation} \label{eq66} \beta_{\infty}(x,r) = \frac{1}{r}  \inf_{P} \big \{ \sup \{\txt{dist}(y,P); \,\, y \in B_{r}(x) \} \big\}, \end{equation}
where $x \in M \cap B_{10}(0)$, $0 < r \leq 10$, and the infimum is taken over all $n$-dimensional affine subspaces $P$ of $ \mathbb{R}^{n+k}$, passing through $x$.\\

It is not surprising that the $\beta_{\infty}$ numbers play a role here. They  were introduced by Jones in the Traveling Salesman Problem \cite{J1}, and then used by Bishop and Jones in \cite{BJ1} and \cite{BJ2}, and by Lerman and many others in the context of Lipschitz or nearly Lipschitz parametrizations (see \cite{DS1,DS2,J2,J3,L,P}).\\

Now, consider the function $ J_{\infty}(x)= \displaystyle \sum_{k\geq 0}\beta^{2}_{\infty}(x,10^{-k})$, where $x \in M \cap B_{10}(0)$. David and Toro prove \cite{DT1} that if a set $M$ is $\epsilon$-Reifenberg flat, and if the function $J_{\infty}$ is uniformly bounded on $M \cap B_{10}(0)$, then $M$ is a bi-Lipschitz image of an $n$-dimensional affine subspace in $\mathbb{R}^{n+k}$. They also prove the same result while considering the possibly smaller \footnote{In the case where $M$ is locally Ahlfors regular with Ahlfors regularity constant $C_{M}$, we have $\beta_{1}(x,r) \leq C_{M} \, \beta_{\infty}(x,r)$.} numbers 
 $\beta_{1}$-numbers 

\begin{equation} \label{eq6} \beta_{1}(x,r) = \inf_{P} \frac{1}{r^n} \int_{M \cap B_{r}(x)} \frac{\txt{dist}(y,P)}{r}\, d \mathcal{H}^{n}(y) ,\end{equation}
where $x \in M \cap B_{10}(0)$, $0< r \leq 10$, $\mathcal{H}^{n}$ is the $n$-dimensional Hausdorff measure, and the infimum this time, is taken over all $n$-dimensional affine subspaces $P$ of $ \mathbb{R}^{n+k}$, passing through $B_{r}(x)$, (and not necessarily through $x$). One can think of the $\beta_{1}$-numbers as a weak version of the $\beta_{\infty}$ numbers. Analogous to the function $J_{\infty}$, consider the function $ J_{1}(x)= \displaystyle \sum_{k\geq 0}\beta^{2}_{1}(x,10^{-k})$, where $x \in M \cap B_{10}(0)$. Then, David and Toro prove 

\begin{theorem} \label{theorem 1.2} \footnote{Note that in this theorem, there is no apriori assumption of Ahlfors regularity.}\label{thTD} (see Theorem 1.4 in \cite{DT1})
Suppose that $n$, $k$, and $M$ are as in Theorem \ref{TDT}. Let $\epsilon >0$ small enough, depending on $n$ an $k$. Assume that for every $x \in M \cap B_{10}(0)$ and for every $0 < r \leq 10$, we can find an $n$-dimensional affine subspace $P(x,r)$ of $\mathbb{R}^{n+k}$ that contains $x$ such that (\ref{eq1}) and (\ref{eq2}) hold. Moreover, suppose there exists a positive number $N$ such that for all $x \in M \cap B_{10}(0)$, we have 
$J_{1}(x) := \displaystyle \sum_{k\geq 0}\beta^{2}_{1}(x,10^{-k}) \leq N$. Then, the mapping $g$ provided by Theorem \ref{TDT} is $K$-bi-Lipschitz, that is, (\ref{bilip}) holds, with the bi-Lipschitz constant $K$ depending only on $n$, $k$, and $N$.
\end{theorem}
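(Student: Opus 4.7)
The plan is to refine the quantitative analysis underlying the Reifenberg construction from Theorem \ref{TDT}. Recall that the parametrization $g$ is obtained as an infinite composition $g = \lim_{j \to \infty} f_j \circ \cdots \circ f_0$ of diffeomorphisms of $\mathbb{R}^{n+k}$, where $f_j$ interpolates, via a partition of unity on a $10^{-j}$-net in $M$, between the plane fields $\{P(x, 10^{-j})\}$ and $\{P(x, 10^{-j-1})\}$. Bi-H\"older control in Theorem \ref{TDT} comes from the uniform bound $\|Df_j - \mathrm{Id}\| \lesssim \epsilon$ at every scale, which under composition produces an exponent $1 \pm O(\epsilon)$. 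To upgrade to bi-Lipschitz, one must replace the crude constant $\epsilon$ by a point- and scale-dependent quantity $\epsilon_j(x)$ and arrange that $\sum_j \epsilon_j(x)^2$ is uniformly bounded.

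First I would prove the key angle estimate: for each $x \in M \cap B_{10}(0)$ and each $j \geq 0$,
\begin{equation*}
\angle\bigl(P(x, 10^{-j}), P(x, 10^{-j-1})\bigr) \lesssim \beta_1(x, C \cdot 10^{-j}),
\end{equation*}
with an analogous comparison for planes centered at nearby points of $M$. The proof exploits Reifenberg flatness: the set $M \cap B_r(x)$ is quantitatively spread out near any approximating plane, so an $L^1$-approximation of $M$ by a plane $P$ can be upgraded, at the cost of an absolute constant, to an $L^\infty$-type approximation on a definite portion of $M$, and two planes each close in this sense to the same portion of $M$ must themselves be close. Combined with the choice of $P(x,10^{-j})$ satisfying \eqref{eq1}--\eqref{eq2}, this yields the angle bound.

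Next I would feed these angle bounds into the David--Toro $\ell^2$ machinery, whose central point is that the distortion of the composition at step $j$ behaves \emph{quadratically} rather than linearly in $\epsilon_j(x)$, thanks to an orthogonality/cancellation argument between consecutive small rotations of nearly parallel planes. Square-summability of the $\epsilon_j(x)$ then suffices to control $\prod_j (1 + \text{error}_j)$ from both sides, which delivers \eqref{bilip} with $K = K(n, k, N)$. The principal obstacle is the angle estimate: $\beta_1$ is an $L^1$-average over $M$, whereas the Reifenberg construction requires pointwise information on the selected planes $P(x,r)$. Bridging this gap requires using the Reifenberg-flat hypothesis to obtain a lower bound on how much of $M$ lies in $B_r(x)$, which is precisely what allows one to trade a weak $L^1$ smallness of $\mathrm{dist}(\cdot, P)$ for control on a quantitatively large subset of $M$; once this passage is accomplished, substituting $\beta_1$ for $\beta_\infty$ in the David--Toro framework is largely routine.
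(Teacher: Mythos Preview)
The paper does not give its own proof of this statement: Theorem~\ref{thTD} is quoted from \cite{DT1} (Theorem~1.4 there) as background for the main results, and no argument for it appears in the body of the paper. What the paper does use from \cite{DT1} is the abstract machinery recorded later as Theorem~\ref{t2} and Corollary~\ref{cr1}: a CCBP yields a map $g$, and square-summability of the quantities $\epsilon'_k(f_k(z))$ in \eqref{nasty}--\eqref{89} upgrades $g$ to bi-Lipschitz. Your sketch is consistent with that framework---the composition of scale-$10^{-j}$ moves, the quadratic dependence of the distortion on the angle errors, and the need to convert the $L^1$ quantity $\beta_1$ into a pointwise plane comparison via Reifenberg flatness are exactly the ingredients one sees reflected in Theorem~\ref{t2}, Corollary~\ref{cr1}, and the way the present paper mimics them with the $\alpha$-numbers. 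So there is nothing here to compare against a proof in the paper; your outline matches the David--Toro strategy as it is invoked, and the only caveat is that it remains a high-level sketch rather than a proof.
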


It was very interesting to find a condition involving the $\beta_{1}$-numbers sufficient to guarantee a local bi-Lipschitz parametrization of $M$ (from Theorem \ref{thTD}), since a previous result by David and Semmes \cite{DS2} stated that for an $n$-Ahlfors regular subset $M$ of $\mathbb{R}^{n+k}$, a Carleson condition on the $\beta_{1}$-numbers
\begin{equation*} \int_{M \cap B_{r}(x)}\int_{0}^{r} \beta^{2}_{1}(x,r)\frac{dt}{t} \, d \mathcal{H}^{n}(y) \leq C_{0} \, r^{n}, \end{equation*}
where $x \in M$, $0 < r\leq 1$ and $C_{0}$ is a constant that depends only on $n$, $k$, and the Ahlfors regularity constant 
 is a necessary condition for $M$ to be 
(locally) a bi-Lipschitz image of an $n$-plane (see \cite{DT1}, remark 15.6). Carleson-type conditions which are sufficient for $M$ to admit a bi-Lipschitz parametrization have been studied (see \cite{To2}). In \cite{To2}, Toro studies a Carleson-type condition on the Reifenberg flatness (equations (\ref{eq1}) and (\ref{eq2})) which yields a bi-Lipschitz parametrization. As a corollary, she obtains an interesting result for a special type of \emph{chord arc surfaces with small constant}, that is CASSC.

\begin{definition}
Let $M$ be a connected $C^{2}$ hyper-surface in $\mathbb{R}^{n+1}$ such that $M \cup \{ \infty \}$ is a $C^{2}$ hyper-surface in $\mathbb{R}^{n+1} \cup \{ \infty \}$. Let $\nu(x)$ denote a choice of unit normal to $M$. Let $||\nu||_{*}$ denote the BMO norm of $\nu$, that is 
\begin{equation*} ||\nu||_{*} = \sup_{x \in M, r>0}  \frac{1}{\mathcal{H}^{n}(M \cap B_{r}(x))}\int_{M \cap B_{r}(x)}| \nu(y) - \nu_{x,r}| \, d \mathcal{H}^{n}(y),\end{equation*} 
where  $\nu_{x,r} =  \displaystyle \dashint_{B_{r}(x)} \nu(y) \, d \mathcal{H}^{n}(y) = \displaystyle \frac{1}{\mathcal{H}^{n}(M \cap B_{r}(x))}\int_{M \cap B_{r}(x)} \nu(y) \, d \mathcal{H}^{n}(y)$ denotes the average of the unit normal $\nu$ on the ball $B_{r}(x)$.

Suppose that there exists $\gamma >0$ small enough such that $||\nu||_{*} \leq \gamma$ and the following holds
\begin{equation*} |<x-y, \nu_{x,r}>| \leq \gamma \, r \quad \forall\,\, x \in M,\, 0 < r \leq 1 \,\, \txt{and} \,\, y \in M \cap B_{r}(x). \end{equation*}
Then, $M$ is called a \emph{chord arc surface with small constant}. 
\end{definition}

Thus, CASSC are $C^{2}$ hyper-surfaces in $\mathbb{R}^{n+1}$ that have small BMO norm, and at every point $x$ and scale $r$, they are close to the $n$-plane whose normal is $\nu_{x,r}$. These hyper-surfaces were introduced by Semmes \cite{Se1}. He proves that they can be locally parametrized by a $C^{0,\alpha}$ homeomorphism, for any $\alpha < 1$. It is then natural to ask if they admit a local bi-Lipschitz parametrization. In \cite{To2}, Toro proves the following theorem about CASSC:

\begin{theorem} \label{CASSC} (see Corollary 5.1 in \cite{To2})
Suppose $M$ is a CASSC. There exists $\delta >0$ and $\epsilon>0$, depending only on $n$ such that if $||\nu||_{*} \leq \delta$ and 
\begin{equation} \label{CC2} \int_{0}^{\infty} \sup_{x \in M} \left ( \,\,\dashint_{M \cap B_{2r}(x)}| \nu(y) - \nu_{x,2r}|^{p} \right )^{\frac{2}{p}} \frac{dr}{r} \leq \epsilon^{2}, \end{equation}
for some $p>n$, then $M$ admits a local $K$-bi-Lipschitz parametrization, with the bi-Lipschitz constant $K$ depending on $\epsilon$, $\delta$, and the dimension $n$.
\end{theorem}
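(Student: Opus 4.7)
The plan is to verify the hypotheses of Theorem \ref{thTD} and then apply that theorem directly. I must check that $M$ is $\epsilon$-Reifenberg flat with $\epsilon$ below the threshold of Theorem \ref{TDT}, and that $J_{1}(x) = \sum_{k\geq 0}\beta_{1}^{2}(x,10^{-k})$ is uniformly bounded on $M \cap B_{10}(0)$. Both conclusions should follow from the small-BMO norm and the Carleson-type integral (\ref{CC2}).

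For Reifenberg flatness, I would take $P(x,r)$ to be the affine hyperplane through $x$ with normal $\nu_{x,r}$. The CASSC bound $|\langle x-y,\nu_{x,r}\rangle| \leq \gamma r$ for $y \in M \cap B_{r}(x)$ is precisely (\ref{eq1}) with constant $\gamma = \delta$. The reverse inclusion (\ref{eq2}) — that every point of $P(x,r)\cap B_{r}(x)$ lies within $\epsilon r$ of $M$ — uses the $C^{2}$ hypersurface structure together with small $||\nu||_{*}$: following Semmes \cite{Se1}, $M$ locally separates $\mathbb{R}^{n+1}$ into two components each of which, under a suitable smallness of $\delta$, is close in Hausdorff distance to a half-space bounded by $P(x,r)$. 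A standard topological argument then yields (\ref{eq2}) with constant $\epsilon$ comparable to $\delta$.

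For the $J_{1}$ bound, I would show that the $L^{p}$ oscillation of $\nu$ at scale $r$ dominates $\beta_{1}(x,r)$. Since $\nu$ is the unit normal, for $y \in M \cap B_{r}(x)$ the height function $z \mapsto \langle z - x, \nu_{x,r}\rangle$ has tangential gradient whose size at $z \in M$ is controlled by $|\nu(z) - \nu_{x,r}|$. Integrating along a rectifiable path $\sigma$ in $M \cap B_{2r}(x)$ joining $x$ to $y$ — which exists thanks to local quasi-convexity of $M$ — gives
\ben
\txt{dist}(y,P(x,r)) \leq C \int_{0}^{\ell}|\nu(\sigma(t)) - \nu_{x,r}|\,dt.
\een
Combined with local Ahlfors regularity coming from the CASSC structure, averaging and H\"older's inequality with $p>n$ yield
\ben
\beta_{1}(x,r) \leq C \bigg( \dashint_{M \cap B_{2r}(x)}|\nu - \nu_{x,2r}|^{p}\,d\mathcal{H}^{n}\bigg)^{1/p}.
\een
Squaring, summing over $r = 10^{-k}$, and comparing the resulting series with the dyadic integral in (\ref{CC2}) gives $J_{1}(x) \leq C\epsilon^{2}$ uniformly in $x$. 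Choosing $\delta$ and $\epsilon$ small enough that the resulting Reifenberg constant and $J_{1}$-bound fall within the range of Theorem \ref{thTD}, the conclusion follows.

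The main obstacle I expect is the path-integral step: making rigorous the bound on $\txt{dist}(y, P(x,r))$ in terms of the oscillation of $\nu$ requires a quantitative quasi-convexity of $M$ at every scale so that one can join $x$ and $y$ inside $M \cap B_{2r}(x)$ by a curve of length comparable to $|x-y|$, with constants independent of scale. The small-BMO and CASSC conditions should supply this, but the bookkeeping must ensure that every implicit constant depends only on $n$ so that the final $\delta$, $\epsilon$ can be chosen universally.
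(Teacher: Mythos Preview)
The paper does not contain a proof of this theorem: Theorem~\ref{CASSC} is quoted from \cite{To2} (as Corollary~5.1 there) purely as motivation for the main result, and no argument for it is given in the present paper. So there is nothing here to compare your attempt against.

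That said, a brief remark on your approach versus the original one in \cite{To2}. Toro's actual proof does not go through Theorem~\ref{thTD} and the $\beta_{1}$-numbers; it predates that result and instead controls the $\beta_{\infty}$-numbers directly. The key point is that for a CASSC one can use a Morrey-type estimate: since $p>n$, the $L^{p}$ mean oscillation of $\nu$ on $B_{2r}(x)$ bounds the $L^{\infty}$ oscillation of the height function $y\mapsto\langle y-x,\nu_{x,r}\rangle$ on $M\cap B_{r}(x)$, giving a pointwise bound $\txt{dist}(y,P(x,r))\leq C\,r\,\big(\dashint_{B_{2r}(x)}|\nu-\nu_{x,2r}|^{p}\big)^{1/p}$. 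Squaring and integrating in $r$ against $dr/r$ produces exactly the Carleson condition on the flatness that \cite{To2} uses to obtain a bi-Lipschitz parametrization. This is morally what your path-integral heuristic is reaching for, but the Morrey inequality packages the passage from the line integral of $|\nu-\nu_{x,r}|$ to the $L^{p}$ average cleanly and is precisely where the hypothesis $p>n$ enters. Your route via $\beta_{1}$ and Theorem~\ref{thTD} would also work in principle, but the step you flag as the obstacle, turning the path integral into an honest bound on $\beta_{1}(x,r)$ with constants depending only on $n$, is exactly the step that the Morrey/Sobolev embedding handles; without it your argument is incomplete.
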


In this paper, we generalize Theorem \ref{CASSC}. We relax both the regularity condition imposed on the hyper-surface $M$, and the Carleson condition on the oscillation of the unit normal of $M$, and prove the existence of a local bi-Lipschitz parametrization for $M$. So what conditions do we want to start with? We consider rectifiable sets of co-dimension 1, which are Ahlfors regular, and satisfy a Poincar\'{e}-type inequality. As mentioned earlier, rectifiable sets are characterized by having \emph{approximate tangent planes} almost everywhere (see Definitions \ref{rect} and \ref{ats} for precise definitions of rectifiability and approximate tangent planes). Thus, if $M$ is rectifiable of co-dimension 1, it admits (generalized) unit normals almost everywhere. Notice that at every point in $M$ where an approximate tangent plane exists, there are two choices for the direction of the generalized unit normal. Later, we impose a Carleson-type condition that ensures a coherent choice of generalized unit normal for $M$.\\

Let $M$ be an $n$-dimensional rectifiable set in $\mathbb{R}^{n+1}$. Denote by $\mu$ the $n$-Hausdorff measure restricted to $M$, that is,  $\mu =$  \( \mathcal{H}^{n} \mres M\). Suppose that $M$ is $n$-Ahlfors regular with Ahlfors regularity constant $C_{M}$ (see Definition 2.11 for the definition of $n$-Ahlfors regular sets and the Ahlfors regularity constant). We note here that CASSC are in particular $n$-Ahlfors regular, (see \cite{Se1}). The Poincar\'{e}-type inequality we consider on $M$ is the following:\\ 
\\
For all $x \in M$, $r > 0$, and $f$ a locally Lipschitz function on $\mathbb{R}^{n+1}$, we have 
\be \label{eqp} \dashint_{B_{r}(x)} \left| f(y) - f_{x,r} \right| \, d \mu(y) \leq C_{P} \, r \, \left(\,\dashint_{B_{2r}(x)}|\nabla^{M}f(y)|^{2} \, d \mu(y) \right)^{\frac{1}{2}}, \ee
where $C_{P}$ denotes the Poincar\'{e} constant that appears here, $f_{x,r}$ is the average of the function $f$ on $B_{r}(x)$(see (\ref{average}) for precise definition),
and $\nabla^{M}f(y)$ denotes the tangential derivative of $f$ (see (\ref{td}) for the definition the tangential gradient). \\

We remark here that Semmes has proved in \cite{Se3} that the Poincar\'{e}-type inequality (\ref{eqp}) is satisfied by CASSC. In fact, this is the motivation behind our asking that the rectifiable set $M$ satisfies this Poincar\'{e}-type inequality. This inequality is different from the usual Poincar\'{e} inequality on Euclidean space (see \cite{EG} p. 141). For instance, in (\ref{eqp}), the average of the oscillation of $f$ is bounded by its tangential derivative and not the usual derivative; moreover, the ball on the right hand side of (\ref{eqp}) has twice the radius of the ball on the left hand side of (\ref{eqp}), which is not the case in the usual Poincar\'{e} inequality. However, (\ref{eqp}) fits perfectly with the Poincar\'{e} inequality that Riemannian manifolds with Ricci curvature bounded from below satisfy (see \cite{HK} p.46) once we take the metric $g$ to be the pullback of the Euclidean metric to the manifold. Semmes' proof that CASSC satisfy (\ref{eqp}) strongly depends on the fact that the surface is chord arc, and in particular, smooth. In this paper, we assume this inequality, and prove, in the last section, that not all rectifiable sets satisfy (\ref{eqp}). In fact, we prove that this Poincar\'{e}-type inequality (\ref{eqp}) gives connectivity information about $M$.\\

 We are ready to state the main result of this paper:\\

\begin{theorem} \label{MTT'}
Let $M \subset B_{1}(0)$ be an $n$-Ahlfors regular rectifiable set containing the origin, and let $\mu =$  \( \mathcal{H}^{n} \mres M\) be the Hausdorff measure restricted to $M$. Assume that $M$ satisfies the Poincar\'{e}-type inequality (\ref{eqp}). There exists $\epsilon_{0} = \epsilon_{0}(n, C_{M}, C_{P})>0$, such that if there exists a choice of unit normal $\nu$ to $M$ so that

\be \label{103} \int_{0}^{1} \left(\,\dashint_{B_{r}(x)} |\nu(y) - \nu_{x,r}|^{2} \, d \mu \right) \frac{dr}{r} < \epsilon_{0}^{2} \quad \txt{for} \,\, x \in M \cap B_{\frac{1}{10^{3}}}(0), \ee 
then there exists a bijective $K$-bi-Lipschitz map $g: \mathbb{R}^{n+1} \rightarrow \mathbb{R}^{n+1}$ where the bi-Lipschitz constant $K = K(n, C_{M}, C_{P})$, and an $n$-dimensional plane $\Sigma_{0}$, with the following properties:
\be \label{aa} g(z)= z \quad \txt{when} \,\,\, d(z, \Sigma_{0}) \geq 2, \ee
and
\be  \label{bb} |g(z)-z| \leq C_{0} \epsilon_{0} \quad \txt{for} \,\,\, z \in \mathbb{R}^{n+1}, \ee
where $C_{0} = C_{0}(n, C_{M}, C_{P})$. Moreover, 
\be \label{cc} g(\Sigma_{0})\,\, \txt{is a} \,\,\, C_{0} \epsilon_{0} \txt{-Reifenberg flat set}, \ee and 
\be M \cap B_{\frac{1}{10^{3}}}(0) \subset g(\Sigma_{0}). \ee
\end{theorem}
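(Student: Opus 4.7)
My approach is to verify the two hypotheses of Theorem~\ref{theorem 1.2} for $M$ -- namely $\epsilon$-Reifenberg flatness and a uniform bound on $J_{1}(x)$ -- so that the David--Toro parametrization may be invoked directly. The Poincar\'e inequality (\ref{eqp}) converts the Carleson-type smallness of $\nu$-oscillation in (\ref{103}) into smallness of the $\beta_{1}$-numbers, while Ahlfors regularity together with the connectivity (quasiconvexity) that (\ref{eqp}) is asserted to imply upgrades this to the pointwise Reifenberg condition.

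The central estimate is obtained by applying (\ref{eqp}) to the affine function $f(y) = \langle y - x, \nu_{x,r}\rangle$, which is Lipschitz on $\mathbb{R}^{n+1}$. Since $\nabla f \equiv \nu_{x,r}$, the tangential gradient at a point $y$ whose approximate tangent plane is $\nu(y)^{\perp}$ is the projection of $\nu_{x,r}$ onto $\nu(y)^{\perp}$, so
\[
|\nabla^{M}f(y)|^{2} \;=\; 1 - \langle \nu_{x,r}, \nu(y)\rangle^{2} \;\leq\; |\nu(y) - \nu_{x,r}|^{2}.
\]
The quantity $|f(y) - f_{x,r}|$ is precisely the distance from $y$ to the affine hyperplane $\{z : \langle z - x, \nu_{x,r}\rangle = f_{x,r}\}$, so (\ref{eqp}), Ahlfors regularity, and the bound $|\nu_{x,r} - \nu_{x,2r}|^{2} \lesssim \dashint_{B_{2r}(x)} |\nu - \nu_{x,2r}|^{2}\, d\mu$ combine to give
\[
\beta_{1}(x,r)^{2} \;\lesssim_{n,\,C_{M},\,C_{P}}\; \dashint_{B_{2r}(x)} |\nu(y) - \nu_{x,2r}|^{2}\, d\mu(y).
\]
Summing over the dyadic scales $r = 10^{-k}$ and comparing with the continuous integral in (\ref{103}) (using that the right-hand side is almost constant over dyadic windows, itself a consequence of Ahlfors regularity) yields both $\beta_{1}(x,r) \lesssim \epsilon_{0}$ at every scale $r \in (0, 1)$ and $J_{1}(x) \lesssim \epsilon_{0}^{2}$ uniformly on $M \cap B_{10^{-3}}(0)$.

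To pass from $\beta_{1}$-smallness to $\epsilon$-Reifenberg flatness I would use a standard ball-packing argument: if some $y_{0} \in M \cap B_{r/2}(x)$ lay at distance $h$ from the $\beta_{1}$-minimizing plane, then Ahlfors regularity forces $B_{h/4}(y_{0}) \cap M$ to contribute $\gtrsim h^{n+1}/r^{n+1}$ to $\beta_{1}(x,r)$, giving $\beta_{\infty}(x, r/2) \lesssim \beta_{1}(x,r)^{1/(n+1)}$ and hence condition (\ref{eq1}) after translating the plane to pass through $x$ (a negligible cost). The main obstacle is condition (\ref{eq2}) -- that $P(x,r) \cap B_{r}(x)$ lies within $\epsilon r$ of $M$ -- since Ahlfors regularity alone would permit $M$ to cover only half of $P(x,r)$. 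This is where the Poincar\'e inequality enters non-formally: it forces $M$ to be quasiconvex (as asserted in the introduction), and a macroscopic hole in $M$ across $P(x,r)$ would force any rectifiable path in $M$ between two points on opposite sides of the hole to detour by a definite fraction of $r$, contradicting quasiconvexity at the relevant scale.

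With both hypotheses of Theorem~\ref{theorem 1.2} verified, the David--Toro construction -- after a harmless rescaling sending $B_{10^{-3}}(0)$ to the $B_{10}(0)$ of that theorem -- produces a $K$-bi-Lipschitz map $g$ with $K$ depending only on $n$, $C_{M}$, and $C_{P}$. Taking $\Sigma_{0} = P(0, r_{0})$ for the largest admissible scale and tracking the quantitative estimates (\ref{eq3})--(\ref{eq5}) yields (\ref{aa})--(\ref{cc}) together with the inclusion $M \cap B_{10^{-3}}(0) \subset g(\Sigma_{0})$. The single hardest step is the verification of Reifenberg condition (\ref{eq2}); the Poincar\'e inequality contributes there in an essential, geometric way, whereas in the $\beta_{1}$ estimate it contributes only through a formal chain of inequalities.
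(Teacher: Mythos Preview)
Your central estimate --- applying (\ref{eqp}) to the affine function $f(y)=\langle y,\nu_{x,r}\rangle$ to bound $\beta_{1}(x,r)$ by the normal oscillation --- is exactly what the paper does (Theorem~\ref{t1}), and the summation argument giving $J_{1}(x)\lesssim\epsilon_{0}^{2}$ is also essentially the paper's Lemma~\ref{l5}. The divergence, and the genuine gap, is your attempt to verify the Reifenberg condition (\ref{eq2}) so as to invoke Theorem~\ref{theorem 1.2} directly.

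Your quasiconvexity argument for (\ref{eq2}) does not work. Quasiconvexity with constant $\kappa$ only says that paths have length at most $\kappa\,|x-y|$; it does \emph{not} rule out holes of size comparable to $r$. Concretely, take $M$ to be an $n$-plane with a ball of radius $r/10$ deleted: this set is Ahlfors regular, satisfies the Poincar\'e inequality (\ref{eqp}) (it is a John domain), is quasiconvex with a modest constant, and has $\nu$ constant so that (\ref{103}) holds with $\epsilon_{0}=0$ --- yet (\ref{eq2}) fails with any small $\epsilon$ at the scale $r$. Going around a hole of diameter $r/5$ costs only $O(r)$ extra length, which is perfectly compatible with $\kappa$-quasiconvexity. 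So the hypotheses of Theorem~\ref{MTT'} simply do not imply Reifenberg flatness of $M$, and Theorem~\ref{theorem 1.2} cannot be applied as a black box. This is also why the conclusion of Theorem~\ref{MTT'} is the \emph{inclusion} $M\cap B_{10^{-3}}(0)\subset g(\Sigma_{0})$ rather than the equality (\ref{eq5}) that Theorem~\ref{theorem 1.2} would give.

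The paper avoids this obstacle by working one level deeper inside the David--Toro machinery. Rather than verifying Reifenberg flatness of $M$, it builds a coherent collection of balls and planes (CCBP) directly: maximal nets $\{\tilde{x}_{jk}\}\subset M\cap B_{10^{-3}}(0)$ at each scale $r_{k}=10^{-k-4}$, and planes $P_{jk}$ obtained from Theorem~\ref{t1}. The compatibility conditions (\ref{74}), (\ref{75}) between nearby planes are verified via an ``effective spanning'' argument (Lemmas~\ref{l1} and~\ref{l3}) that uses only Ahlfors regularity and the $\beta_{1}$-type bound (\ref{73}); no two-sided Reifenberg condition on $M$ is ever needed. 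The abstract CCBP theorem (Theorem~\ref{t2}) then produces $g$, and $M\cap B_{10^{-3}}(0)$ lands in $g(\Sigma_{0})$ because every point of $M$ is an accumulation point of the nets. The bi-Lipschitz bound comes from Corollary~\ref{cr1} after controlling $\sum_{k}\epsilon'_{k}(f_{k}(z))^{2}$ by $\sum_{k}\alpha^{2}(\bar z,r_{k-4})$ for a nearby $\bar z\in M$.
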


It is worth mentioning here that the theorem states that $M$ \emph{is contained in} a bi-Lipschitz image of an $n$-plane instead of $M$ being exactly a bi-Lipschitz image of an $n$-plane, as proved in Theorems \ref{TDT}, \ref{thTD}, and \ref{CASSC}. However, this is very much expected, since when we drop the assumption of Reifenberg flatness on $M$, we have to deal with the fact that $M$ might be full of holes. \\

 The paper is structured as follows: in section \ref{Pre} we record several definitions and preliminaries. In section \ref{LAD}, we prove a couple of linear algebra lemmas needed to prove Theorem \ref{MTT'}. In section \ref{MBLI}, we prove Theorem \ref{MTT'}. This is done in several steps. First, we define the $\alpha$-numbers

\ben   \alpha(x,r) := \left(\,\dashint_{B_{r}(x)} |\nu(y) - \nu_{x,r}|^{2} \, d \mu \right)^{\frac{1}{2}}, \een
where $x \in M$, and $0 < r \leq 1$.\\
\\
These $\alpha$-numbers play the role that $\beta_{1}$-numbers played for Theorem \ref{thTD} (see Theorem \ref{t1}) \footnote{It is worth mentioning here that Theorem \ref{t1} is the place in which the Poincar\'e inequality is used, where it is applied only to a specific smooth function on $\mathbb{R}^{n+1}$. However, it is the heart of Theorem \ref{t1}, which in turn, is an integral step to proving Theorem \ref{MTT'}.}. Then we prove Theorem \ref{MTT'} using the $\alpha$-numbers, while handling the issue that $M$ might be have many holes. We finish this section by a remarking that the Carleson-type condition (\ref{103}) can be replaced by a more general condition on the $\alpha$ numbers which shows up naturally in the proof of Theorem \ref{MTT'} (see Lemma \ref{l1}, Remark \ref{cargen}, and Theorem \ref{MTT'again}), and Theorem \ref{MTT'} would still hold. \\

In section \ref{QCM}, we show that the Poincar\'{e}-type inequality satisfied by $M$ is interesting by itself, as it encodes some geometric information about $M$. In fact, we show that if a rectifiable set $M$ satisfies (\ref{eqp}), then $M$ is quasiconvex. A set is quasiconvex if any two points in the set are connected by a rectifiable curve, contained in the set, whose length is comparable to the distance between the two points. We finish this section by remarking that the Poincar\'e inequality (\ref{eqp}) is indeed equivalent to the other usual Poincar\'e-type inequalities found in literature that imply quasiconvexity (see \cite{Ch}, \cite{DJS}, \cite{K1} \cite{KM}) . Thus, Theorem \ref{MTT'} still holds if one replaces (\ref{eqp}) with any of those Poincar\'e-type inequalities.

\section{Preliminaries} \label{Pre}
Throughout this paper, our ambient space is $\mathbb{R}^{n+1}$. $B_{r}(x)$ denotes the open ball center $x$ and radius $r$ in $\mathbb{R}^{n+1}$, while $\bar{B}_{r}(x)$ denotes the closed ball center $x$ and radius $r$ in $\mathbb{R}^{n+1}$. $d(.,.)$ denotes the distance function from a point to a set. $\mathcal{H}^{n}$ is the $n$-Hausdorff measure. Finally, constants may vary from line to line, and the parameters they depend on will always be specified in a bracket. For example, $C(n,C_{M})$ will be a constant that depends on $n$ and the Ahlfors regularity constant, that may vary from line to line. \\

We begin by recalling the definition of a Lipschitz and a bi-Lipschitz function:

\begin{definition}
Let $M \subset \mathbb{R}^{n+1}$. A function $ f: M \rightarrow \mathbb{R}$ is called \emph{Lipschitz} if there exists a constant $K>0$, such that for all $x, \, y \in M$ we have
\begin{equation} \label{lip1}
|f(x) - f(y)| \leq K \, |x-y|.
\end{equation}
The smallest such constant is called the \emph{Lipschitz constant} and is denoted by $LIPf$.
\end{definition}

\begin{definition}
A function $ f: \mathbb{R}^{n+1} \rightarrow \mathbb{R}^{n+1}$ is called $K$-\emph{bi-Lipschitz} if there exists a constant $K>0$, such that for all $x, \, y \in \mathbb{R}^{n+1}$ we have
\begin{center}
$K^{-1} |x-y| \leq |f(x) - f(y)| \leq K \, |x-y|.$ 
\end{center}
\end{definition}

Next, we introduce the class of \emph{n-rectifiable} sets:
\begin{definition} \label{rect}
Let $M \subset \mathbb{R}^{n+1}$ be an $\mathcal{H}^{n}$-measurable set. $M$ is said to be countably \emph{n-rectifiable} if 
 \begin{center}$ M \subset M_{o} \cup \left(\displaystyle \bigcup_{i=1}^{\infty}f_{i}(A_{i})\right)$, \end{center}
where $ \mathcal{H}^{n}(M_{o}) = 0$, and $ f_{i} : A_{i} \rightarrow \mathbb{R}^{n+1}$ is Lipschitz, and $A_{i} \subset \mathbb{R}^{n}$, for  $i = 1, 2, \ldots$ 
\end{definition}

 $n$-rectifiable sets are characterized in terms of approximate tangent spaces which we now define:

\begin{definition} \label{ats}
If $M$ is an $\mathcal{H}^{n}$-measurable subset of $\mathbb{R}^{n+1}$. We say that the $n$-dimensional subspace $P(x)$ is the \emph{approximate tangent space of $M$ at $x$}, if
\begin{equation}   \lim_{\lambda \to 0}  \lambda^{-n} \int_{M} {f \left (\lambda^{-1}(y-x)\right) } \, d \mathcal{H}^{n}(y) = \int_{P(x)} f(y) \, d\mathcal{H}^{n}(y) \quad \forall f \in C^1_c(\mathbb{R}^{n+1}, \mathbb{R}).
\end{equation} 
\end{definition}

\begin{remark}
Notice that if it exists, $P(x)$ is unique. From now on, we shall denote the tangent space of $M$ at $x$ by $T_{x}M$.\end{remark}

The following theorem gives the important characterization of $n$-rectifiable sets in terms of approximate tangent spaces:\\
\begin{theorem} \label{atp} (see \cite{Si}; Theorem 11.6 \footnote{See the proof of the \emph{only if} part p. 62, to realize that Theorem \ref{atp} of this paper is a special case of Theorem 11.6 in \cite{Si}.})\\
Suppose $M$ is an $\mathcal{H}^{n}$-measurable subset of $\mathbb{R}^{n+1}$. Then $M$ is countably $n$-rectifiable \emph{if and only if} the approximate tangent space $T_{x}M$ exists for $\mathcal{H}^{n}$-a.e. $ x \in M$.
\end{theorem}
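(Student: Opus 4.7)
The plan is to prove the two directions of Theorem \ref{atp} separately, treating them as essentially independent statements of very different flavor.

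For the easier direction (rectifiable $\Rightarrow$ approximate tangent spaces exist $\mathcal{H}^n$-a.e.), I would begin from the definition: modulo a set of $\mathcal{H}^n$-measure zero, $M$ is covered by countably many Lipschitz images $f_i(A_i)$ with $A_i \subset \mathbb{R}^n$. By Rademacher's theorem each $f_i$ is differentiable $\mathcal{H}^n$-a.e. on $A_i$, and a standard application of the Whitney extension theorem (together with Lusin's theorem for the derivative) lets me refine the cover so that, up to a further null set, $M$ is contained in a countable union of embedded $C^1$ $n$-submanifolds $\Sigma_j \subset \mathbb{R}^{n+1}$. For $\mathcal{H}^n$-a.e.\ $x \in M$ one can then choose the (unique) $\Sigma_j$ containing $x$ at which $x$ has $\mathcal{H}^n$-density one inside $M \cap \Sigma_j$ and density zero inside $M \setminus \Sigma_j$ (this is the usual density dichotomy for rectifiable pieces). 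The classical tangent space $T_x \Sigma_j$ then serves as $T_x M$, because the defining integral in Definition \ref{ats} is controlled by $\mathcal{H}^n(M \cap B_r(x) \setminus \Sigma_j) = o(r^n)$ on one side and by the $C^1$ graph expansion of $\Sigma_j$ near $x$ on the other.

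For the harder direction (approximate tangent spaces a.e.\ $\Rightarrow$ rectifiable), the strategy is to convert the pointwise tangent hypothesis into a cone condition, and then extract Lipschitz graphs. If $T_x M$ exists, then testing the defining integral against bump functions supported outside a narrow cone around $T_x M$ shows that for every $\theta \in (0,1)$,
\begin{equation*}
\lim_{r \to 0} r^{-n}\, \mathcal{H}^n\bigl( M \cap B_r(x) \setminus X(x, T_x M, \theta) \bigr) = 0,
\end{equation*}
where $X(x, P, \theta) = \{ y : \mathrm{dist}(y-x, P) \leq \theta |y-x|\}$. For each $k \in \mathbb{N}$ let $E_k$ be the set of $x$ where the tangent space exists and the above density bound holds quantitatively: $\mathcal{H}^n(M \cap B_r(x) \setminus X(x, T_x M, 1/k)) \leq \tfrac{1}{k} r^n$ for all $r \leq 1/k$. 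By hypothesis $\mathcal{H}^n(M \setminus \bigcup_k E_k) = 0$, so it suffices to cover each $E_k$ by a countable union of Lipschitz $n$-graphs modulo null sets.

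The main obstacle is to make this extraction uniform on measurable subsets of $E_k$. The device I would use is a Lusin/Egorov reduction: on each $E_k$ the map $x \mapsto T_x M$ is $\mathcal{H}^n$-measurable, so I can decompose $E_k$ into countably many compact pieces $F$ on which $T_x M$ is continuous and the cone estimate above holds uniformly in $x$ and $r$. On such an $F$, pick any point $x_0 \in F$ and let $P = T_{x_0} M$; after passing to a sufficiently small ball $B_\rho(x_0)$ the tangent planes at all $y \in F \cap B_\rho(x_0)$ are nearly parallel to $P$, so the cone condition forces any two points $y_1, y_2 \in F \cap B_\rho(x_0)$ with $|\pi_P(y_1)-\pi_P(y_2)| < \theta |y_1-y_2|$ (say $\theta = 1/(2k)$) to contradict the density bound, hence $F \cap B_\rho(x_0)$ is contained in a $\theta$-Lipschitz graph over $P$. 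Extending this graph Lipschitz-continuously to all of $P$ by Kirszbraun gives an $f_{x_0,\rho}$ whose image contains $F \cap B_\rho(x_0)$. Running a Vitali-type covering of $F$ yields countably many such Lipschitz images covering $F$, and iterating over all $F$ and all $k$ finishes the proof. The subtle point requiring care is precisely the "no two $F$-points share a projection" step, since it is here that the existence of an approximate tangent plane is converted from an integral into a combinatorial graph property.
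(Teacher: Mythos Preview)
The paper does not give its own proof of this statement: Theorem~\ref{atp} is quoted from Simon's lecture notes (Theorem~11.6 in \cite{Si}), with a footnote explaining that the version here is the special case of multiplicity one. So there is no in-paper argument to compare against; your sketch is essentially a condensed version of the classical proof that appears in Simon (and in Mattila, Federer, etc.), namely: for the forward direction, reduce to $C^1$ pieces via Rademacher/Whitney and use the density dichotomy; for the converse, convert the tangent hypothesis into a uniform cone condition on compact pieces and extract Lipschitz graphs.

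One point in your converse direction deserves to be made explicit. When you argue that two points $y_1,y_2\in F\cap B_\rho(x_0)$ with $|\pi_P(y_1)-\pi_P(y_2)|<\theta|y_1-y_2|$ ``contradict the density bound,'' the cone estimate you recorded in the definition of $E_k$ is an \emph{upper} bound on the mass of $M$ outside the cone at $y_1$. A single point $y_2$ lying outside that cone is not by itself a contradiction; you need a \emph{lower} mass bound near $y_2$, i.e., $\mathcal{H}^n\bigl(M\cap B_{\delta|y_1-y_2|}(y_2)\bigr)\gtrsim (\delta|y_1-y_2|)^n$ for small $\delta$, so that a definite amount of $M$ sits outside the cone at $y_1$ at scale $|y_1-y_2|$. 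This lower bound does follow from Definition~\ref{ats} (the limit equals $\int_{P(x)}f$, hence the $n$-density at every tangent point is exactly $1$), but it is not encoded in your sets $E_k$ as written. You should either add a uniform lower-density clause to $E_k$ or invoke it separately at $y_2$; otherwise the graph-extraction step, which you correctly flag as the subtle one, does not close.
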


\begin{remark} \label{RN}
Notice that Theorem \ref{atp} and the fact that $M$ is co-dimension 1 guarantee the existence of a generalized unit normal to $M$ for $\mathcal{H}^{n}$-a.e. $ x \in M$. However, at each of these points, there are two choices for the direction of the generalized unit normal. 
\end{remark}

We also need to define the notion \emph{Reifenberg flatness}: 
\begin{definition} Let $M$ be an $n$-dimensional subset of $\mathbb{R}^{n+1}$. We say that $M$ is $\epsilon$-Reifenberg flat for some $\epsilon >0$, if for every $x \in M$   and $0 < r \leq \frac{1}{10^{4}}$, we can find an $n$-dimensional affine subspace $P(x,r)$ of $\mathbb{R}^{n+1}$ that contains $x$ such that 

\begin{equation*}
d(y, P(x,r)) \leq \epsilon r \quad \txt{for} \,\,y \in M \cap B_{r}(x),
\end{equation*} 
and 
\begin{equation*} 
d(y, M) \leq \epsilon r \quad \txt{for} \,\, y \in P(x,r) \cap B_{r}(x).
\end{equation*} 
 \end{definition}

\begin{remark}
Notice that the above definition is only interesting if $\epsilon$ is small, since any set is 1-Reifenberg flat. \end{remark}

\noindent In the proof of our theorems, we need to measure the distance between two $n$-dimensional planes. We do so in terms of normalized local Hausdorff distance:

\begin{definition}
Let $x$ be a point in $\mathbb{R}^{n+1}$ and let $r >0$. Consider two closed sets $E,\,F \subset \mathbb{R}^{n+1}$ such that both sets meet the ball $B_{r}(x)$. Then,
\begin{equation*} d_{x,r}(E,F) = \frac{1}{r} \, \txt{Max} \left\{ \sup_{y \in E \cap B_{r}(x)} \txt{dist}(y,F) \,\,; \sup_{y \in F \cap B_{r}(x)} \txt{dist}(y,E) \right\} \end{equation*}
is called the normalized Hausdorff distance between $E$ and $F$ in $B_{r}(x)$.
\end{definition}

Finally, we recall the definition of an $n$-Ahlfors regular measure and an $n$-Ahlfors regular set:

\begin{definition}
Let $M \subset \mathbb{R}^{n+1}$ be a closed, $\mathcal{H}^{n}$ measurable set, and let $\mu=$  \( \mathcal{H}^{n} \mres M\) be the $n$-Hausdorff measure restricted to $M$. We say that $\mu$ is $n$-Ahlfors regular if there exista a constant $C_{M} \geq 1$, such that for every $x \in M$ and $ 0< r < 1$, we have 
\begin{equation} \label{alfh}  C_{M}^{-1} \, r^{n} \leq \mu(B_{r}(x)) \leq C_{M} \, r^{n}.\end{equation}

In such a case, the set $M$ is called an $n$-Ahlfors regular set, and $C_{M}$ is referred to as the Ahlfors regularity constant.
\end{definition}

\section{Linear Algebra Digression} \label{LAD}
To prove our main theorem, we need the following two linear algebra lemmas. Since they are independent results, let us digress a little bit and prove them here. \\

\textbf{Notation:}\\
Let $V$ be an affine subspace of $\mathbb{R}^{n+1}$ of dimension $k$, $ k \in \{ 0, \dots, n-1\}$. 
Denote by $N_{\delta}(V)$, the $\delta$-neighbourhood of $V$, that is, 
\ben N_{\delta}(V) = \left\{ x \in \mathbb{R}^{n+1} \,\,\txt{such that} \,\, d(x,V) < \delta \right\}. \een

\begin{lemma}
\label{l1} Let $M$ be an $n$-Ahlfors regular subset of $\mathbb{R}^{n+1}$, and let $\mu =$  \( \mathcal{H}^{n} \mres M\) be the Hausdorff measure restricted to $M$. There exists a constant $c_{0} = c_{0}(n, C_{M}) \leq \displaystyle \frac{1}{2}$ such that the following is true: Fix $x_{0} \in M$, $r_{0} < 1$ and let $r = c_{0} \, r_{0}$. Then, for every $V$, an affine subspace of $\mathbb{R}^{n+1}$ of dimension $0 \leq k \leq n-1$,
there exists $x \in M \cap B_{r_{0}}(x_{0})$ such that $x \notin  N_{11 r}(V)$ and $B_{r}(x) \subset B_{2 r_{0}}(x_{0})$. 
\end{lemma}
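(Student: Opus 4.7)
The plan is a pure measure-comparison argument: show that the $\mu$-mass of the ``bad'' tube $N_{11r}(V)\cap B_{r_{0}}(x_{0})$ is strictly smaller than the Ahlfors lower bound $\mu(B_{r_{0}}(x_{0}))\geq C_{M}^{-1}r_{0}^{n}$, so that its complement in $M\cap B_{r_{0}}(x_{0})$ contains an admissible point. The inclusion $B_{r}(x)\subset B_{2r_{0}}(x_{0})$ will then come for free from $c_{0}\leq 1/2$, since for any $y\in B_{r}(x)$ we have $|y-x_{0}|\leq r+|x-x_{0}|<(1+c_{0})r_{0}<2r_{0}$.

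First I would cover $N_{11r}(V)\cap B_{r_{0}}(x_{0})$ efficiently by balls of radius comparable to $r$. Let $\pi_{V}$ be orthogonal projection onto the $k$-dimensional affine subspace $V$. Since $\pi_{V}$ is $1$-Lipschitz, $\pi_{V}\bigl(B_{r_{0}}(x_{0})\bigr)\subset B_{r_{0}}(\pi_{V}(x_{0}))\cap V$, and the latter is a $k$-dimensional ball, so it can be covered by at most $C(k)(r_{0}/r)^{k}$ points $z_{1},\dots,z_{N}\in V$ with $V\cap B_{r_{0}}(\pi_{V}(x_{0}))\subset\bigcup_{i}B_{r}(z_{i})$. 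For $y\in N_{11r}(V)\cap B_{r_{0}}(x_{0})$, if $\pi_{V}(y)\in B_{r}(z_{i})$ then $|y-z_{i}|\leq |y-\pi_{V}(y)|+|\pi_{V}(y)-z_{i}|\leq 12r$, so
\begin{equation*}
N_{11r}(V)\cap B_{r_{0}}(x_{0})\ \subset\ \bigcup_{i=1}^{N}B_{12r}(z_{i}).
\end{equation*}

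Second I would apply Ahlfors regularity on each $B_{12r}(z_{i})$. If $B_{12r}(z_{i})\cap M=\emptyset$ it contributes nothing; otherwise pick $x_{i}'\in M\cap B_{12r}(z_{i})$, so $B_{12r}(z_{i})\subset B_{24r}(x_{i}')$ and $\mu(B_{12r}(z_{i}))\leq C_{M}(24r)^{n}$. Summing and using $k\leq n-1$,
\begin{equation*}
\mu\bigl(N_{11r}(V)\cap B_{r_{0}}(x_{0})\bigr)\ \leq\ C(n)\,(r_{0}/r)^{k}\,C_{M}(24r)^{n}\ \leq\ C(n,C_{M})\,r_{0}^{n-1}\,r\ =\ C(n,C_{M})\,c_{0}\,r_{0}^{n}.
\end{equation*}

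Finally, choose $c_{0}=c_{0}(n,C_{M})\leq 1/2$ small enough that $C(n,C_{M})\,c_{0}<C_{M}^{-1}$. Then
\begin{equation*}
\mu\bigl(N_{11r}(V)\cap B_{r_{0}}(x_{0})\bigr)\ <\ C_{M}^{-1}r_{0}^{n}\ \leq\ \mu(B_{r_{0}}(x_{0})),
\end{equation*}
so $M\cap B_{r_{0}}(x_{0})\setminus N_{11r}(V)\neq\emptyset$, and any $x$ in this difference works. I do not anticipate a serious obstacle: the only mild subtlety is that the covering centers $z_{i}$ lie in $V$ rather than in $M$, which is handled by replacing $z_{i}$ with a nearby point of $M$ (at the cost of a harmless factor of $2$ in the ball radius); everything else reduces to bookkeeping of the exponent $n-k\geq 1$ that makes the tube genuinely thinner than $B_{r_{0}}(x_{0})$.
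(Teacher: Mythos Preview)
Your proof is correct and follows the same measure-comparison strategy as the paper: bound $\mu\bigl(N_{11r}(V)\cap B_{r_{0}}(x_{0})\bigr)$ from above via a covering and Ahlfors upper regularity, then compare with the lower bound $\mu(B_{r_{0}}(x_{0}))\geq C_{M}^{-1}r_{0}^{n}$. The only difference is in how the covering is organized. The paper runs a Vitali covering of $M\cap N_{11r}(V)\cap B_{r_{0}}(x_{0})$ by disjoint balls centered on $M$, then bounds the number of such balls by comparing Lebesgue volume against the containing cylinder $B^{k}_{r_{0}+r}(a)\times B^{n+1-k}_{12r}(a)$; it removes the $k$-dependence of the resulting constant at the end by taking a minimum over $k\in\{0,\dots,n-1\}$. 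You instead cover directly by projecting onto $V$ and re-center each covering ball on a nearby point of $M$ afterward, which avoids Vitali altogether, and your inequality $r_{0}^{k}r^{n-k}\leq r_{0}^{n-1}r$ (valid because $k\leq n-1$ and $r\leq r_{0}$) eliminates the $k$-dependence in one stroke. Both routes give the same estimate; yours is a bit more streamlined. One tiny bookkeeping point: to invoke $\mu(B_{24r}(x_{i}'))\leq C_{M}(24r)^{n}$ you need $24r<1$, i.e.\ $c_{0}<1/24$, but since you are choosing $c_{0}$ small anyway this is harmless.
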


\begin{proof}
Fix $x_{0} \in M$, $r_{0} < 1$, and $k \in \left\{ 0, \ldots, n-1 \right\}$. Let $V$ be an affine $k$-dimensional subspace of $\mathbb{R}^{n+1}$. Consider $N_{11 r}(V)$, where $r < r_{0}$ is to be determined later. The set \\ $ \mathcal{A} := \left\{ B_{ \frac{r}{5}}(x), \,\, x \in M \cap N_{11 r}(V) \cap B_{r_{0}}(x_{0}) \right\}$ forms a cover for $M \cap N_{11 r}(V) \cap B_{r_{0}}(x_{0})$, and thus by Vitali's theorem, there exists a finite disjoint subset of $\mathcal{A}$, say  $ \mathcal{A^{'}} := \left\{ B_{ \frac{r}{5}}(x_{i}) \right\}_{i=1}^{N}$, such that 
\be \label{24} M \cap N_{11 r}(V) \cap B_{r_{0}}(x_{0}) \subset \bigcup_{i=1}^{N} B_{r}(x_{i}). \ee

Let us start by getting an upper bound for the number of balls $N$, needed to cover  $M \cap N_{11 r}(V) \cap B_{r_{0}}(x_{0})$. Notice that
\be \label{25} \bigcup_{i=1}^{N} B_{\frac{r}{5}}(x_{i}) \subset B^{k}_{r+r_{0}}(a) \times B^{n+1-k}_{12r}(a), \ee
where $a = \pi_{V}(x_{0})$, the orthogonal projection of $x_{0}$ on $V$, $ B^{k}_{r+r_{0}}(a) = V \cap B_{r+r_{0}}(a)$, and $ B^{n+1-k}_{12r}(a) = V^{\perp} \cap  B_{12r}(a)$ where $V^{\perp}$ is the affine subspace, perpendicular to $V$ and passing through $a$. \\
\\
In fact, take $x \in \displaystyle \bigcup_{i=1}^{N} B_{r}(x_{i})$. Then there exists $x_{i} \in M \cap B_{r_{0}}(x_{0}) \cap N_{11r}(V)$, with $i \in \{ 1, \ldots, N\}$ such that $|x - x_{i}| \leq \frac{r}{5}$. Now, write $x$ as $x = (\pi_{V}(x),\pi_{V^{\perp}}(x))$. On one hand, we have  

\bes \label{sm} |\pi_{V}(x) - a| &=& |\pi_{V}(x) - \pi_{V}(x_{0})| \nonumber \\
&\leq&  |\pi_{V}(x) - \pi_{V}(x_{i})| +  |\pi_{V}(x_{i}) - \pi_{V}(x_{0})| \nonumber \\
&\leq& |x - x_{i}| + |x_{i} - x_{0}| \leq r + r_{0},\ees
where in the last step we used the facts that $x_{i} \in B_{r_{0}}(x_{0})$ and $|x - x_{i}| \leq \frac{r}{5}$.\\

On the other hand,
\bes \label{sm'} |\pi_{V^{\perp}}(x) - a| &\leq&  |\pi_{V^{\perp}}(x) - \pi_{V^{\perp}}(x_{i})| +  | \pi_{V^{\perp}}(x_{i}) -  a| \nonumber \\
&\leq& |x - x_{i}| + 11r \leq 12r,\ees
where in the step before the last we used the fact that $x_{i} \in N_{11r}(V)$, and in the last step we used that $|x - x_{i}| \leq \frac{r}{5}$.\\
\\
Combining (\ref{sm}) and (\ref{sm'}), we get (\ref{25}).\\
\\
Since the balls in $\mathcal{A^{'}}$ are disjoint, then by taking the Lebesgue measure on each side of (\ref{25}), we get 
\bes N \omega_{n+1} \left(\frac{r}{5} \right)^{n+1} &\leq& \omega_{k}\,(r_{0} + r)^{k}\, \omega_{n+1-k} \,({12 r})^{n+1-k} \nonumber \\
&\leq& C(n,k) \,(r_{0} + r)^{k} \, {r}^{n+1-k} \nonumber \\
&\leq&  C(n,k) \, r_{0}^{k} \, {r}^{n+1-k} 
\ees
 where in the last step, we used the fact that $r<r_{0}$. 
Thus,
\be \label{26} N \leq C(n,k) \, r_{0}^{k} \, r^{-k}. \ee

Now, we want to use the fact that $\mu$ is Ahlfors regular to compare the $\mu$-measures of the sets $N_{11 r}(V) \cap  B_{r_{0}}(x_{0})$ and $ B_{r_{0}}(x_{0})$.\\

On one hand, since $\mu$ is lower Ahlfors regular and $x_{0} \in M$, we have by (\ref{alfh})
\be \label{30} \mu\big(B_{r_{0}}(x_{0})\big) \geq C_{M}^{-1} \,r_{0}^{n}. \ee

On the other hand, by (\ref{24}), the fact that $\mu$ is upper Ahlfors regular and $x_{i} \in M$ for all $i \in \{ 1, \ldots, N \}$, and by (\ref{26}), we get

\bes \label{27} \mu\big(N_{11 r}(V) \cap  B_{r_{0}}(x_{0})\big) &=&  \mu\big(M \cap N_{11 r}(V) \cap  B_{r_{0}}(x_{0})\big) \nonumber \\  &\leq& \sum_{i=1}^{N} \mu\big(B_{r}(x_{i})\big) \nonumber \\ &\leq& C_{M} \, N\, r^{n} \nonumber\\
&\leq& C(n,k, C_{M}) \,r_{0}^{k} \, r^{n-k},  \ees
Let us denote by $C_{1}$ the constant $C(n,k, C_{M})$ we get from (\ref{27}). From now till the end of the proof, $C_{1}$ will stand for exactly this constant. Hence, (\ref{27}) becomes 

\be  \label{28} \mu\big(N_{11 r}(V) \cap  B_{r_{0}}(x_{0})\big) \leq C_{1} \,r_{0}^{k} \, r^{n-k}. \ee

Thus, if we pick $r$ such that 
\be \label{29} r^{n-k} < \frac{C_{M}^{-1}}{C_{1}} \, r_{0}^{n-k}, \ee
then
\be \label{31} C_{1} \,r_{0}^{k} \, r^{n-k} < C_{M}^{-1} \, r_{0}^{n} .\ee

Comparing (\ref{31}) with (\ref{30}) and (\ref{28}), we get 
\ben  \mu\big(N_{11r}(V) \cap  B_{r_{0}}(x_{0})\big) <  \mu\big(B_{r_{0}}(x_{0})\big), \een

and thus, there exists a point $x \in M \cap  B_{r_{0}}(x_{0})$ such that $ x \notin  N_{11 r}(V)$.\\
\\
Notice that the proof of the lemma would have been done if the statement allowed for $r = c(n,k, C_{M}) r_{0}$ \big(see(\ref{29})\big). In fact, we have shown that for every $k \in \left\{ 0, \ldots, n-1 \right\}$, and for every $V$,  an affine $k$-dimesional subspace of $\mathbb{R}^{n+1}$, there is a constant $c(n,k, C_{M})$ such that if $r \leq c(n,k, C_{M}) r_{0}$, then we can find a point $x \in M \cap  B_{r_{0}}(x_{0})$ such that $ x \notin  N_{11 r}(V)$.\\
\\
Now, take $r = c_{0}\, r_{0}$ where $c_{0} < \txt{min}\{ c(n,0, C_{M}), \ldots c(n,n-1, C_{M})\}$. First, notice that $c_{0}$ is a constant depending only on $n$ and $C_{M}$. Moreover, when $V$ is an affine $k$-dimensional subspace of $\mathbb{R}^{n+1}$, $k \in \left\{ 0, \ldots, n-1 \right\}$, we have $ r = c_{0}\, r_{0} \leq  C(n,k, C_{M}) r_{0}$. Thus, there exists a point $x \in M \cap  B_{r_{0}}(x_{0})$ such that $ x \notin  N_{11 r}(V)$.\\
Without loss of generality, we can assume that $c_{0} \leq \displaystyle \frac{1}{2}$.  The fact that $B_{r}(x) \subset B_{2 r_{0}}(x_{0})$ follows directly from the fact that $r < r_{0}$, and the proof is done.
\end{proof}

\begin{remark}
Let us note here that as stated in the lemma above, the dimension of the affine subspace $V$ is allowed to be 0. In fact, if $V$ is a single point, say $V = \left\{ y_{0} \right\}$, then $N_{\delta}(V)  = B_{\delta}(y_{0})$, and the proof follows exactly as above.\end{remark}

Moreover, the dimension  $k$ of $V$ has $n-1$ as an upper bound. This is because the lemma fails for $k=n$ (take $M = V= \mathbb{R}^{n}$ and let $x_{0} = 0$).

\begin{lemma} \label{l3}
Fix $R>0$, and let $\{u_{1}, \ldots u_{n} \}$ be $n$ vectors in $\mathbb{R}^{n+1}$. Suppose there exists a constant $K_{0} >0$ such that 

\be \label{44'} |u_{j}| \leq K_{0} \, R \quad \forall j \in \{1,\ldots, n\}. \ee
Moreover, suppose there exists a constant $0 < k_{0} < K_{0}$, such that 
\be \label{44}|u_{1}|\geq k_{0} \, R,\ee
and 
\be \label{45} u_{j} \notin N_{k_{0}R}\big(span\{u_{1}, \ldots u_{j-1}\} \big)  \quad \forall j \in \{2,\ldots, n\}. \ee

Then, for every vector $v \in V:= span\{u_{1}, \ldots u_{n}\} $, $v$ can be written uniquely as 
\be v = \sum_{j=1}^{n} \beta_{j}u_{j},\ee
where \be \label{46} |\beta_{j}| \,\leq K_{1}\frac{1}{R} \, |v|, \quad \forall j \in \{1,\ldots, n\} \ee
with $K_{1}$ being a constant depending only on $n$, $k_{0}$, and $K_{0}$.
\end{lemma}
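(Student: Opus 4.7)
\textbf{Proof Plan for Lemma \ref{l3}.}

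The plan is to peel off the coefficients $\beta_n, \beta_{n-1}, \ldots, \beta_1$ one at a time by exploiting the quantitative linear independence encoded in (\ref{45}). The key observation is that (\ref{45}) says exactly that the orthogonal projection of $u_j$ onto the orthogonal complement of $V_{j-1} := \mathrm{span}\{u_1, \ldots, u_{j-1}\}$ has norm at least $k_0 R$, since for any subspace $W$, $d(u_j, W)$ equals the norm of the component of $u_j$ orthogonal to $W$.

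First I would establish uniqueness by showing $\{u_1, \ldots, u_n\}$ is linearly independent: if $\sum \alpha_j u_j = 0$ with $\alpha_j \neq 0$ for some largest index $j$, then $u_j \in V_{j-1}$, contradicting (\ref{45}) (which forces $d(u_j, V_{j-1}) \geq k_0 R > 0$). Since $\dim V = n$ and we have $n$ linearly independent vectors in $V$, $\{u_j\}$ is a basis, so the coefficients $\beta_j$ exist and are unique.

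For the bound, I would proceed by downward induction. Set $w_n := v$ and assume inductively that $w_j \in V_j$. Writing $w_j = \sum_{i=1}^{j} \beta_i u_i$ and applying the orthogonal projection $\pi_{j-1}^{\perp}$ onto $V_{j-1}^{\perp}$, the terms with $i < j$ vanish, leaving
\[
\pi_{j-1}^{\perp}(w_j) = \beta_j \, \pi_{j-1}^{\perp}(u_j).
\]
By (\ref{45}), $|\pi_{j-1}^{\perp}(u_j)| = d(u_j, V_{j-1}) \geq k_0 R$, so $|\beta_j| \leq |w_j|/(k_0 R)$. (For $j=1$, $V_0 = \{0\}$ and $|\pi_0^{\perp}(u_1)| = |u_1| \geq k_0 R$ by (\ref{44}).) Then set $w_{j-1} := w_j - \beta_j u_j \in V_{j-1}$ and use the triangle inequality together with (\ref{44'}) to get $|w_{j-1}| \leq |w_j| + |\beta_j| |u_j| \leq |w_j| \, (1 + K_0/k_0)$.

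Iterating the size bound gives $|w_j| \leq |v| \, (1 + K_0/k_0)^{n-j}$, and therefore
\[
|\beta_j| \leq \frac{|w_j|}{k_0 R} \leq \frac{1}{k_0 R} \left(1 + \frac{K_0}{k_0}\right)^{n-1} |v|,
\]
which is (\ref{46}) with $K_1 := k_0^{-1}(1 + K_0/k_0)^{n-1}$ depending only on $n$, $k_0$, $K_0$. There is no real obstacle; the only thing to be careful about is getting the recursion to terminate correctly at $j=1$, which is handled by the base hypothesis (\ref{44}) that plays the role of (\ref{45}) for $j=1$.
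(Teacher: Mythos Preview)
Your proof is correct and takes a genuinely different route from the paper. The paper runs Gram--Schmidt explicitly, writes the change-of-basis matrix $A$ from $\{u_j\}$ to the orthonormal $\{e_j\}$ as an upper triangular matrix with diagonal entries $\lambda_j^j = d(u_j, V_{j-1}) \geq k_0 R$ and all entries bounded by $K_0 R$, and then bounds $A^{-1}$ via the adjugate formula $A^{-1} = (\det A)^{-1}\,\mathrm{adj}(A)$; this yields $K_1 = \sqrt{n}\,(n-1)!\,K_0^{n-1}/k_0^n$. Your approach instead peels off the top coefficient by projecting onto $V_{j-1}^\perp$ and recurses on the residual $w_{j-1} = w_j - \beta_j u_j$, which is essentially back-substitution through the same triangular system but without ever writing the matrix down. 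Both methods rest on the same quantitative input, namely $|\pi_{j-1}^\perp(u_j)| = d(u_j, V_{j-1}) \geq k_0 R$, but your argument is more elementary (no determinants or cofactor bounds) and produces the sharper constant $K_1 = k_0^{-1}(1 + K_0/k_0)^{n-1}$.
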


\begin{proof}
Since the vectors $\{u_{1}, \ldots u_{n} \}$ are linearly independent (by (\ref{45})), then by the Gram-Schmidt process, we construct $n$ orthonormal vectors, $\{e_{1}, \ldots e_{n} \}$ such that 
\be \label{47} span\{u_{1} , \ldots u_{j} \} =  span\{e_{1} , \ldots e_{j} \} \quad \forall j \in \{1,\ldots, n\} ,  \ee
and 
\be \label{48} u_{j} = \sum_{i=1}^{j} \lambda_{j}^{i} \, e_{i} \quad \forall j \in \{1,\ldots, n\}.  \ee

Let us first consider $j=1$. By (\ref{48}), (\ref{44'}), (\ref{44}), and the fact that $e_{1}$ is a unit vector, we have
\be \label{49} u_{1} = \lambda_{1}^{1} e_{1} \quad \txt{with} \,\,\, k_{0} \, R \leq\ |\lambda_{1}^{1}| \leq K_{0}\, R. \ee
For $i=2$, (\ref{48}), (\ref{45}), and (\ref{47}) tell us that
\ben u_{2} = \lambda_{2}^{1} e_{1} + \lambda_{2}^{2} e_{2}, \een
with
\ben u_{2} \notin N_{k_{0}R}\big( span\{u_{1} \}\big) =  N_{k_{0}R}\big( span\{e_{1} \}\big). \een
This means that 
\be \label{137} |\lambda_{2}^{2}| = d\big( u_{2},span\{e_{1}\} \big) \geq k_{0} \, R. \ee
Moreover, from (\ref{44'}) and the fact that the $\{ e_{1}, e_{2}\}$ is a set of orthonormal vectors, we have
\ben |u_{2}| = \sqrt{ (\lambda_{2}^{1})^{2}  + (\lambda_{2}^{2})^{2} } \leq K_{0} \, R ,\een
that is
\ben | \lambda_{2}^{i}| \leq K_{0} \, R \,\,\,\,\,\, i \in \{1,2 \}. \een
Continuing in a similar manner, we get for every $ j \in \{1, \ldots, n\}$,
\be \label{50} |\lambda_{j}^{j}| =  d\big( u_{j},span\{e_{1}, \ldots , e_{j-1}\} \big) \geq k_{0}\, R, \ee
and
\be \label{50'}  | \lambda_{j}^{i}| \leq K_{0} \, R \,\,\,\, \forall i \in \{1, \ldots j \}. \ee

Let $A$ be the $n \times n$ matrix whose $j$-th column is $u_{j}$ written in the orthonormal basis $\{e_{1}, \ldots , e_{n} \}$. Notice that by construction, $A$ is an upper triangular matrix, whose $ij$-th entry is $\lambda_{j}^{i}$, for every $ i \leq j$. \\
\\
Moreover, $A$ is invertible (since all its diagonal entries are non-zero by (\ref{50})), and is the change of basis matrix from the basis $\{u_{1}, \ldots , u_{n} \}$ to the  basis $\{e_{1}, \ldots , e_{n} \}$.\\
\\ 
Now, consider a vector $v \in  V:= span\{u_{1}, \ldots u_{n}\} =  span\{e_{1}, \ldots e_{n}\} $.  Denoting by $v_{u}$ and $v_{e}$ the representation of the vector $v$ in the bases  $\{u_{1}, \ldots , u_{n} \}$ and $\{e_{1}, \ldots , e_{n} \}$ respectively, let us set
\be \label{53} v = \sum_{j=1}^{n} \beta_{j}u_{j} = \sum_{j=1}^{n} \alpha_{j}e_{j}\ee

We know that $v_{e} = A \cdot v_{u}$, that is
\be \label{51} v_{u} = A^{-1} \cdot v_{e}.\ee
Substituting (\ref{53}) in equality (\ref{51}), we get 
\be \label{52} (\beta_{1}, \ldots, \beta_{n}) =  A^{-1} \cdot (\alpha_{1}, \ldots, \alpha_{n}).  \ee

Let us recall here that 
\be \label{m} A^{-1} = \frac{1}{det(A)} adj(A),\ee

where $adj(A)$ is the adjoint matrix of $A$.\\

Now, if we denote by $(\txt{row})_{l}$, the $l$-th row of $adj(A)$, $l \in \{1 \dots n \}$, then by (\ref{50'}) and unravelling the definition of $adj(A)$, we get
\be \label{141} |(\txt{row})_{l}| \leq \sqrt{n} \, K_{0}^{n-1} \,(n-1)! \, R^{n-1} \quad \forall l \in \{ 1 \ldots n \}.  \ee

Moreover, since  $A$ is an upper triangular matrix, whose $j$-th diagonal entry is $\lambda_{j}^{j}$, then by (\ref{50})
\be \label{mm} det(A) = \lambda_{1}^{1} \ldots \lambda_{n}^{n} \geq  k_{0}^{n} \, R^{n}.\ee

We are now ready to get an upper bound on the $\beta_{j}$'s:\\

From (\ref{52}) and (\ref{m}), we can see that for every $j \in \{1, \ldots, n \}$

\be \label{153} \beta_{j} =\frac{1}{det(A)} \, (\txt{row})_{j} \, \cdot (\alpha_{1}, \ldots, \alpha_{n}). \ee 

Thus, by (\ref{153}),  (\ref{mm}), (\ref{141}), (\ref{53}),  and the fact that $\{ e_{1}, \ldots, e_{n} \}$ are an orthonormal set of vectors, we get
\bes \label{154} | \beta_{j}| &\leq& \frac{1}{k_{0}^{n}\, R^{n}} \,  |(\txt{row})_{j}| \, |  (\alpha_{1}, \ldots, \alpha_{n})| \nonumber \\
&\leq&  \frac{1}{k_{0}^{n}\, R^{n}} \,  \sqrt{n} \, K_{0}^{n-1} \,(n-1)! \, R^{n-1} \, |v| \nonumber \\
&=& K_{1} \frac{1}{R} \, |v|,\ees
where $K_{1}$ is a constant depending on $n$, $k_{0}$, and $K_{0}$.
This completes the proof of the lemma.

\end{proof}

\section{M is contained in a bi-Lipschitz image of an $n$-plane} \label{MBLI}

Throughout the rest of the paper, $M$ denotes an $n$-Ahlfors regular rectifiable subset of $\mathbb{R}^{n+1}$ and $\mu =$  \( \mathcal{H}^{n} \mres M\) denotes the Hausdorff measure restricted to $M$. The average of a function $f$ on the ball $B_{r}(x)$ is denoted by 
\begin{equation} \label{average} f_{x,r}= \dashint_{B_{r}(x)} f  \, d \mu(y) =  \displaystyle \frac{1}{\mu(M \cap B_{r}(x))}   \int_{B_{r}(x)} f \, d \mu(y). \end{equation}

Finally, for a locally Lipschitz function $f$ on $\mathbb{R}^{n+1}$, $\nabla^{M}f(y)$ denotes the tangential derivative of $f$ at the point $y \in M$. More precisely, 

\begin{equation} \label{td'} \nabla^{M}f(y) = \nabla (f|_{L}) (y)\end{equation}
where $L := y + T_{y}M$, $f|_{L}$ is the restriction of $f$ on the affine subspace $L$, and $\nabla(f|_{L})$ is the usual gradient of $f|_{L}$.\\

In the special case when $f$ is a smooth function on $\mathbb{R}^{n+1}$, we have
\begin{equation} \label{td} \nabla^{M}f(y) = \pi_{T_{y}M} (\nabla f (y)),\end{equation}
where $\pi_{T_{y}M}$ is the orthogonal projection of $\mathbb{R}^{n+1}$ on $T_{y}M$, and $\nabla f$ is the usual gradient of $f$.\\

In this section, we prove Theorem \ref{MTT'}, the main theorem of this paper. Recall that Theorem \ref{MTT'} states that if there is a choice of unit normal to $M$ such that the Carleson-type condition (\ref{103}) on the oscillation of the unit normal to $M$ is satisfied, and if $M$ satisfies the Poincar\'e-type condition (\ref{eqp}), then $M$ lives inside a bi-Lipschitz image of an $n$-dimensional plane.\\

Let us highlight the main steps needed to prove this theorem.  First, we define what we call the $\alpha$-numbers
\be \label{a'} \alpha(x,r) := \left(\,\dashint_{B_{r}(x)} |\nu(y) - \nu_{x,r}|^{2} \, d \mu \right)^{\frac{1}{2}}, \ee
where $x \in M$, and $0 < r \leq \displaystyle \frac{1}{10}$.\\
These numbers are the most important ingredient to proving our theorem. In Lemma \ref{l5}, we show that  the Carleson condition (\ref{103}) implies that these numbers are small. Moreover, for every point $x \in M$, and series $\displaystyle \sum_{i=1}^{\infty} \alpha^2(x,  10^{-j})$ is finite. Then, in Theorem \ref{t1}, we show that the Poincar\'e-type inequality allows us to construct an $n$-plane $P_{x,r}$ at every point $x \in M$ and every scale $0<r \leq\frac{1}{20}$ where the distance (in integral form) from $M \cap B_{r}(x)$ to $P_{x,r}$ is bounded by $\alpha(x,2r)$. This means, by Lemma \ref{l5}, that those distances are small, and for a fixed point $x$, when we add these distances at the scales $ 10^{-j}$ for $j \in \mathbb{N}$, this series is finite \footnote{ Theorem \ref{t1} implies that the series $\displaystyle \sum_{i=1}^{\infty} \beta_{1}^2(x,  10^{-j})$ is finite. The possible existence of holes in $M$ is the main obstacle, at this point, that does not allow the direct application of Theorem \ref{theorem 1.2}.}. Theorem \ref{t1} is the key point that allows us to use the bi-Lipschitz parametrization that G. David and T. Toro construct in \cite{DT1}. In fact, what they do is construct approximating $n$-planes, and prove that at any two points that are close together, the two planes associated to these points at the same scale, or at two consecutive scales are close in the Hausdorff distance sense. From there, they construct a bi-H\"older  parametrization for $M$. Then, they show that the sum of these distances at scales $ 10^{-j}$ for $j \in \mathbb{N}$ is finite (uniformly for every $x \in M$). This is what is needed for their parametrization to be bi-Lipschitz (see Theorem \ref{t2} below and the definition before it). Thus, the rest of this section is devoted to using Theorem \ref{t1} in order to prove the compatibility conditions between the approximating planes mentioned above, while handling the issue that our set $M$ might be full of holes. \\

Let us begin with the two lemmas that explore the Carleson condition (\ref{103}).

\begin{lemma} \label{l5}
Let $M \subset B_{1}(0)$ be an $n$-Ahlfors regular rectifiable set containing the origin, and let $\mu =$  \( \mathcal{H}^{n} \mres M\) be the Hausdorff measure restricted to $M$. Let $\epsilon > 0$, and suppose that there is a choice of unit normal $\nu$ to $M$ such that \be \label{0} \int_{0}^{1} \left(\,\dashint_{B_{r}(x)} |\nu(y) - \nu_{x,r}|^{2} \, d \mu \right) \frac{dr}{r} < \epsilon^{2}, \quad \forall x \in M. \ee
Then, for every $x \in M$, we have 
\be \label{a} \sum_{j=1}^{\infty} \alpha^{2}(x, 10^{-j}) \leq C \, \epsilon^{2} ,\ee where the $\alpha$-numbers are as defined in (\ref{a'}) and $C = C(n, C_{M})$ is a constant that depends only on $n$ and $C_{M}$.\\
Moreover, for every $x \in M$ and $0 < r \leq \displaystyle \frac{1}{10}$, we have
\be \label{a''} \alpha(x,r) \leq C \, \epsilon ,\ee
 where $C = C(n, C_{M})$. \end{lemma}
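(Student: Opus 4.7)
The plan is to dyadically discretize the Carleson integral from \eqref{0} at the scales $10^{-j}$, and then to exploit Ahlfors regularity together with the minimizing property of averages in order to compare $\alpha(x, r)$ with $\alpha(x, 10^{-j})$ whenever $r$ lies in a short interval around $10^{-j}$. Both conclusions of the lemma will follow from this comparison.

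For the first assertion, I would fix $x \in M$ and $j \geq 1$, and let $r \in [10^{-j}, 10^{-j+1}]$. Since $B_{10^{-j}}(x) \subset B_r(x)$, and since $\nu_{x, 10^{-j}}$ is the mean (and therefore minimizes $c \mapsto \dashint_{B_{10^{-j}}(x)}|\nu(y) - c|^{2}\,d\mu$), I would write
\ben
\alpha^{2}(x, 10^{-j}) \;\leq\; \frac{1}{\mu(B_{10^{-j}}(x))} \int_{B_{10^{-j}}(x)} |\nu(y) - \nu_{x,r}|^{2}\,d\mu \;\leq\; \frac{\mu(B_{r}(x))}{\mu(B_{10^{-j}}(x))}\,\alpha^{2}(x, r).
\een
By the two-sided Ahlfors regularity bound \eqref{alfh}, the measure ratio is dominated by $C_M^{2}\cdot 10^{n}$, a constant $C(n, C_M)$. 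Integrating the pointwise inequality $\alpha^{2}(x, 10^{-j}) \leq C(n, C_M)\,\alpha^{2}(x, r)$ against $dr/r$ on $[10^{-j}, 10^{-j+1}]$ produces
\ben
(\log 10)\,\alpha^{2}(x, 10^{-j}) \;\leq\; C(n, C_M) \int_{10^{-j}}^{10^{-j+1}} \alpha^{2}(x, r)\,\frac{dr}{r}.
\een
Summing over $j \geq 1$ and applying \eqref{0} immediately yields \eqref{a}.

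For the second assertion, given $0 < r \leq 1/10$, I would choose the unique $j \geq 1$ with $10^{-j-1} \leq r \leq 10^{-j}$, and run the same comparison in the reversed direction: now $B_{r}(x) \subset B_{10^{-j}}(x)$, so using that $\nu_{x,r}$ minimizes the $L^{2}$-variance over $B_{r}(x)$,
\ben
\alpha^{2}(x, r) \;\leq\; \frac{\mu(B_{10^{-j}}(x))}{\mu(B_{r}(x))}\,\alpha^{2}(x, 10^{-j}) \;\leq\; C(n, C_M)\,\alpha^{2}(x, 10^{-j}).
\een
Combined with the already-established bound $\alpha^{2}(x, 10^{-j}) \leq C(n, C_M)\,\epsilon^{2}$ from \eqref{a}, this gives \eqref{a''}.

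There is no real obstacle here — the only subtle point is applying the minimizing property of the mean correctly when switching between the balls $B_{r}(x)$ and $B_{10^{-j}}(x)$, which is what permits the comparison of $\alpha$ at different but comparable scales without having to control the difference of the vectors $\nu_{x,r}$ and $\nu_{x, 10^{-j}}$ directly. Everything else is bookkeeping with Ahlfors regularity and a standard dyadic decomposition of the Carleson integral.
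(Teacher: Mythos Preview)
Your proof is correct and follows essentially the same approach as the paper's own proof: both exploit the minimizing property of the mean to compare $\alpha$ at the dyadic scale $10^{-j}$ with $\alpha(x,r)$ for $r$ in a neighboring interval, use Ahlfors regularity to control the measure ratio, and then integrate against $dr/r$ and sum. The only cosmetic difference is the indexing convention (the paper takes $r \in (10^{-j-1},10^{-j}]$ and compares with the smaller ball $B_{10^{-j-1}}(x)$, whereas you take $r \in [10^{-j},10^{-j+1}]$ and compare with $B_{10^{-j}}(x)$), which amounts to a shift of index.
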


\begin{proof}
Let $\epsilon > 0$ and $\nu$ be as described above. Fix $x \in M$. For all $a \in \mathbb{R}^{n+1}$, and for all $0 < r_{0} \leq 1$, we have
\be \label{10} \dashint_{B_{r_{0}}(x)} |\nu(y) - \nu_{x,r_{0}}|^{2} \, d \mu \leq  \,\dashint_{B_{r_{0}}(x)} |\nu(y) - a|^{2} \, d \mu,\ee
since the average $\nu_{x,r_{0}}$ of $\nu$ in the ball $B_{r_{0}}(x)$ minimizes the integrand on the right hand side of (\ref{10}).\\

To prove (\ref{a}), we start by showing
 \be \label{17} \sum_{j=1}^{\infty}  \dashint_{B_{ 10^{-j}}(x)} |\nu(y) -  \nu_{x, 10^{-j}}|^{2} \, d \mu  \leq C(n, C_{M}) \, \sum_{j=0}^{\infty} \int_{ 10^{-j-1}}^{ 10^{-j}} \dashint_{B_{r}(x)} |\nu(y) - \nu_{x,r}|^{2} \, d \mu \, \frac{dr}{r}. \ee

Fix $j \in \mathbb{N}$ and let $r$ be such that 
\be \label{113}  10^{-j-1} < r \leq  10^{-j}, \quad \txt{that is} \quad \frac{1}{ 10^{-j}} \leq \frac{1}{r} < \frac{1}{ 10^{-j-1}}. \ee  

Using (\ref{10}) for $a = \nu_{x,r}$ and $r_{0} =  10^{-j-1}$, (\ref{113}), and the fact that $\mu$ is Ahlfors regular, we get
\bes \label{20} \dashint_{B_{ 10^{-j-1}}(x)} |\nu(y) - \nu_{x, 10^{-j-1}}|^{2} \, d \mu &\leq&  \dashint_{B_{ 10^{-j-1}}(x)} |\nu(y) - \nu_{x,r}|^{2} \, d \mu \nonumber \\
&\leq& C(n, C_{M})\, \dashint_{B_{r}(x)} |\nu(y) - \nu_{x,r}|^{2} \, d \mu , \ees

Dividing both sides of (\ref{20}) by $r$ and then integrating from $ 10^{-j-1}$ to $ 10^{-j}$, we get

\be \label{21} \int^{ 10^{-j}}_{ 10^{-j-1}} \dashint_{B_{ 10^{-j-1}}(x)} |\nu(y) - \nu_{x, 10^{-j-1}}|^{2} \, d \mu \, \frac{dr}{r}\leq C(n, C_{M}) \, \int^{ 10^{-j}}_{ 10^{-j-1}} \dashint_{B_{r}(x)} |\nu(y) - \nu_{x,r}|^{2} \, d \mu  \, \frac{dr}{r} .\ee

Using (\ref{113}) on the left hand side of (\ref{21}) gives us
 \ben \frac{1}{ 10^{-j}} \int^{ 10^{-j}}_{ 10^{-j-1}} dr \dashint_{B_{ 10^{-j-1}}(x)} |\nu(y) - \nu_{x, 10^{-j-1}}|^{2} \, d \mu \leq C(n, C_{M}) \int^{ 10^{-j}}_{ 10^{-j-1}} \dashint_{B_{r}(x)} |\nu(y) - \nu_{x,r}|^{2} \, d \mu  \, \frac{dr}{r},\een
and thus
\be \label{22} \dashint_{B_{ 10^{-j-1}}(x)} |\nu(y) - \nu_{x, 10^{-j-1}}|^{2} \, d \mu \leq C(n, C_{M}) \, \int^{ 10^{-j}}_{ 10^{-j-1}} \dashint_{B_{r}(x)} |\nu(y) - \nu_{x,r}|^{2} \, d \mu  \, \frac{dr}{r}.\ee

Taking the sum over $j$ from $0$ to $\infty$ on both sides of (\ref{22}), we get

\be \label{23} \sum_{j=0}^{\infty} \dashint_{B_{ 10^{-j-1}}(x)} |\nu(y) - \nu_{x, 10^{-j-1}}|^{2} \, d \mu \leq C(n, C_{M}) \sum_{j=0}^{\infty} \int^{ 10^{-j}}_{ 10^{-j-1}} \dashint_{B_{r}(x)} |\nu(y) - \nu_{x,r}|^{2} \, d \mu  \, \frac{dr}{r}, \ee

that is,

\be \label{23'}  \sum_{j=1}^{\infty} \dashint_{B_{ 10^{-j}}(x)} |\nu(y) - \nu_{x, 10^{-j}}|^{2} \, d \mu \leq C(n, C_{M}) \,\sum_{j=0}^{\infty} \int^{ 10^{-j}}_{ 10^{-j-1}} \dashint_{B_{r}(x)} |\nu(y) - \nu_{x,r}|^{2} \, d \mu  \, \frac{dr}{r} \ee
hence finishing the proof of (\ref{17}).\\ 

But, it is trivial to check that 
\be \label{baa} \sum_{j=0}^{\infty} \int_{ 10^{-j-1}}^{ 10^{-j}} \dashint_{B_{r}(x)} |\nu(y) - \nu_{x,r}|^{2} \, d \mu \, \frac{dr}{r} = \int_{0}^{1} \left(\,\dashint_{B_{r}(x)} |\nu(y) - \nu_{x,r}|^{2} \, d \mu \right) \frac{dr}{r}.\ee

Thus, plugging (\ref{baa}), (\ref{a'}), and (\ref{0}) in (\ref{23'}), we get 

\ben \sum_{j=1}^{\infty} \alpha^{2}(x, 10^{-j}) \leq C(n, C_{M}) \, \epsilon^{2} ,\een which is exactly (\ref{a}).

To prove inequality (\ref{a''}), fix $x \in M$ and $0 < r \leq \displaystyle \frac{1}{10}$. Then, there exists $j \geq 1$ such that \be \label{111}  10^{-j-1} < r \leq  10^{-j}, \quad \txt{that is} \quad \frac{1}{ 10^{-j}} \leq \frac{1}{r} < \frac{1}{ 10^{-j-1}}. \ee

Now, using inequality (\ref{10}) for $a = \nu_{x, 10^{-j}}$ and $r_{0} = r$, (\ref{111}), and the fact that $\mu$ is Ahlfors regular, we get (by the same steps used to get (\ref{20})) that

\ben \dashint_{B_{r}(x)} |\nu(y) - \nu_{x,r}|^{2} \, d \mu \leq C(n, C_{M}) \, \dashint_{B_{ 10^{-j}}(x)} |\nu(y) -  \nu_{x, 10^{-j}}|^{2} \, d \mu, \een 

that is, (by (\ref{a'})),

\be \label{baaaa} \alpha^{2}(x,r) \leq C(n, C_{M}) \,  \alpha^{2}(x,  10^{-j}).\ee

Taking the square root on both sides of (\ref{baaaa}) and using (\ref{a}) finishes the proof of (\ref{a''})
\end{proof}

\vspace{0.4cm}
In the next lemma, we use Lemma \ref{l5} to prove that there is a uniform lower bound on $|\nu_{x,r}|$ for all $x \in M$ and $0 < r \leq \frac{1}{10}$.

\begin{lemma} \label{l4}
Let $M \subset B_{1}(0)$ be an $n$-Ahlfors regular rectifiable set containing the origin, and let $\mu =$  \( \mathcal{H}^{n} \mres M\) be the Hausdorff measure restricted to $M$. There exists $\epsilon_{1} = \epsilon_{1}(n, C_{M}) >0$ such that for every $0 < \epsilon \leq \epsilon_{1}$,  if there is a choice of unit normal $\nu$ to $M$ such that \be \label{b} \int_{0}^{1} \left(\,\dashint_{B_{r}(x)} |\nu(y) - \nu_{x,r}|^{2} \, d \mu \right) \frac{dr}{r} < \epsilon^{2}, \,\,\,\,\, \forall x \in M, \ee
then, for all $x \in M$ and $0< r \leq\displaystyle \frac{1}{10}$, 
\be \label{124} |\nu_{x,r}| \geq \frac{1}{2}.\ee
\end{lemma}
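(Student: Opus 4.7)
The plan rests on a very clean algebraic identity: since $\nu$ is a unit normal, $|\nu(y)| = 1$ for $\mu$-a.e.\ $y \in M$, and by the definition of the average $\nu_{x,r}$ we also have $\dashint_{B_r(x)} \nu(y)\, d\mu = \nu_{x,r}$. Expanding the squared norm inside the $\alpha$-number gives
\begin{equation*}
\alpha^2(x,r) = \dashint_{B_r(x)} |\nu(y) - \nu_{x,r}|^2 \, d\mu = \dashint_{B_r(x)}\!\bigl(|\nu(y)|^2 - 2\,\nu(y)\cdot\nu_{x,r} + |\nu_{x,r}|^2\bigr)\, d\mu = 1 - |\nu_{x,r}|^2.
\end{equation*}
So the quantity we want to bound from below is controlled \emph{exactly} by the $\alpha$-numbers: $|\nu_{x,r}|^2 = 1 - \alpha^2(x,r)$.

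Now I would simply invoke the second conclusion of Lemma \ref{l5}, namely the pointwise estimate \eqref{a''}, which says that under the Carleson hypothesis \eqref{b} one has $\alpha(x,r) \leq C(n,C_M)\, \epsilon$ for all $x \in M$ and $0 < r \leq \frac{1}{10}$. Let $C = C(n,C_M)$ denote this constant. Combining with the identity above gives
\begin{equation*}
|\nu_{x,r}|^2 \geq 1 - C^2 \epsilon^2.
\end{equation*}

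Therefore, choosing $\epsilon_1 = \epsilon_1(n,C_M) > 0$ so small that $C^2 \epsilon_1^2 \leq \tfrac{3}{4}$ — for instance $\epsilon_1 = \sqrt{3}/(2C)$ — yields $|\nu_{x,r}|^2 \geq \tfrac{1}{4}$, hence $|\nu_{x,r}| \geq \tfrac{1}{2}$ for every $0 < \epsilon \leq \epsilon_1$, every $x \in M$, and every $0 < r \leq \tfrac{1}{10}$. There is no real obstacle here: the entire argument is a one-line expansion plus a quantitative invocation of the preceding lemma, and the dependence of $\epsilon_1$ on $n$ and $C_M$ comes only through the constant $C$ produced by Lemma \ref{l5}.
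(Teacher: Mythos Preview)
Your proof is correct and takes a genuinely different, much shorter route than the paper's. The key observation you make --- the variance identity
\[
\alpha^{2}(x,r)=\dashint_{B_{r}(x)}|\nu(y)-\nu_{x,r}|^{2}\,d\mu
=\dashint_{B_{r}(x)}|\nu(y)|^{2}\,d\mu-|\nu_{x,r}|^{2}=1-|\nu_{x,r}|^{2},
\]
valid because $|\nu(y)|=1$ for $\mu$-a.e.\ $y$ --- reduces the lemma to the pointwise bound \eqref{a''} from Lemma~\ref{l5}, and the choice of $\epsilon_{1}$ follows immediately.

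The paper instead argues via maximal functions: it sets
$\nu^{*}_{x,r}(y)=\sup_{0<\rho<r}\dashint_{B_{\rho}(y)}|\nu(z)-\nu_{x,2r}|\,d\mu$,
dominates this by the Hardy--Littlewood maximal function of $|\nu-\nu_{x,2r}|\chi_{B_{2r}(x)}$, applies the $L^{2}$ maximal theorem on spaces of homogeneous type to get $\big(\dashint_{B_{r}(x)}|\nu^{*}_{x,r}|^{2}\big)^{1/2}\leq C\,\alpha(x,2r)$, and then locates a Lebesgue point $y_{0}$ at which $\nu^{*}_{x,r}(y_{0})\leq C\epsilon$; letting $\rho\to 0$ gives $|\nu(y_{0})-\nu_{x,2r}|\leq C\epsilon$, hence $\big||\nu_{x,2r}|-1\big|\leq C\epsilon$. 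Your argument bypasses the maximal theorem, the Lebesgue differentiation step, and the auxiliary point $y_{0}$ entirely; the paper's approach does yield the extra piece of information that some concrete $\nu(y_{0})$ is close to $\nu_{x,2r}$, but this is not used elsewhere, so nothing is lost by your simplification.
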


\begin{proof}
Fix $x \in M$ and $0 <r \leq \displaystyle \frac{1}{20}$. Let $0 < \epsilon \leq \epsilon_{1}$ (with $\epsilon_{1}$ to be be determined later), and suppose there is a choice of unit normal $\nu$ to $M$ such that (\ref{b}) holds. Define
\ben \nu^{*}_{x,r}(y) = \sup_{\rho \in (0,r)} \, \dashint_{B_{\rho}(y)} | \nu(z) - \nu_{x,2r}| \, d \mu(z), \,\,\,\,\,\,\,\, y \in M. \een
Let $M\left(|\nu - \nu_{x,2r}| \chi_{B_{2r}(x)}\right)$ be the Hardy-Littlewood maximal function (see \cite{CW}, p. 624, for the extension of the  Hardy-Littlewood maximal function to spaces of homogeneous type). By definition,
\ben M \left(|\nu - \nu_{x,2r}| \chi_{B_{2r}(x)}\right)(y) = \sup_{\rho > 0} \,\dashint_{B_{\rho}(y)} | \nu(z) - \nu_{x,2r}|  \chi_{B_{2r}(x)}(z) \, d \mu(z), \,\,\,\,\,\,\,\,\, y \in M. \een
Notice that
\be \label{7} \nu^{*}_{x,r}(y) \leq  M\left(|\nu - \nu_{x,2r}| \chi_{B_{2r}(x)}\right)(y) \,\,\,\,\,\,\,\,\,\,\, \forall y \in M \cap B_{r}(x). \ee
Using (\ref{7}), the fact that $||M(f)||_{L^2} \leq C(C_{M}) ||f||_{L^2}$ (see \cite{CW}, p. 624, 625), Ahlfors regularity of $\mu$, and Lemma \ref{l5}, we have
\bes \label{120}
\left( \, \dashint_{B_{r}(x)} |\nu^{*}_{x,r}|^{2} \, d \mu(y) \right)^{\frac{1}{2}} &\leq& \left(\, \dashint_{B_{r}(x)} | M\left(|\nu - \nu_{x,2r}| \chi_{B_{2r}(x)}\right)(y)|^{2} \, d \mu(y) \right)^{\frac{1}{2}} \nonumber\\
&=& \frac{1}{\mu(B_{r}(x))^{\frac{1}{2}}} \left(\int_{B_{r}(x)} | M\left(|\nu - \nu_{x,2r}| \chi_{B_{2r}(x)}\right)(y)|^{2} \, d \mu(y) \right)^{\frac{1}{2}} \nonumber\\ 
&\leq& \frac{1}{\mu(B_{r}(x))^{\frac{1}{2}}} \left(\int | M\left(|\nu - \nu_{x,2r}| \chi_{B_{2r}(x)}\right)(y)|^{2} \, d \mu(y) \right)^{\frac{1}{2}} \nonumber\\ 
&\leq&  \frac{C(C_{M})}{\mu(B_{r}(x))^{\frac{1}{2}}} \left(\int \left(|\nu - \nu_{x,2r}| \chi_{B_{2r}(x)})(y)\right)^{2} \, d \mu(y) \right)^{\frac{1}{2}} \nonumber\\
&=&  \frac{C(C_{M})}{\mu(B_{r}(x))^{\frac{1}{2}}} \left(\int_{B_{2r}(x)} |\nu(y) - \nu_{x,2r}|^{2} \, d \mu(y) \right)^{\frac{1}{2}}\nonumber\\
&=& C(C_{M}) \, \left(\,\dashint_{B_{2r}(x)} |\nu(y) - \nu_{x,2r}|^{2} \, d \mu(y) \right)^{\frac{1}{2}}\nonumber\\
&=& C(C_{M}) \, \alpha(x,2r)\leq  C(n, C_{M}) \, \epsilon := C_{1} \, \epsilon. \ees

From (\ref{120}), the fact that $M$ is a rectifiable set (so the normal exists $\mu$-a.e. in $M$), and the fact that $\mu$-a.e. point is a Lebesgue point of the function $f(z) =| \nu(z) - \nu_{x,2r}|$ with respect to $\mu$, it is easy to check that there must exist a point $y_{0} \in M \cap B_{r}(x)$, such that $y_{0}$ is a density point for $f(z)$ with respect to $\mu$, $\nu(y_{0})$ exists, and $\nu^{*}_{x,r}(y_{0}) \leq C_{1} \, \epsilon$.\\
\\
So, by definition of $\nu^{*}_{x,r}(y_{0})$, we get
\ben  \sup_{\rho \in (0,r)}\, \dashint_{B_{\rho}(y_{0})} | \nu(z) - \nu_{x,2r}| \, d \mu(z) \leq C_{1} \, \epsilon, \een
that is,
\be \label{8}  \dashint_{B_{\rho}(y_{0})} | \nu(z) - \nu_{x,2r}| \, d \mu(z) \leq C_{1} \, \epsilon, \,\,\,\,\,\,\,\,\,\,\,\, \forall \rho < r. \ee
Taking the limit as $\rho$ approaches $0$, and using the facts that by construction, $y_{0}$ is a density point of  $f(z) =| \nu(z) - \nu_{x,2r}|$ with respect to $\mu$, and $\nu(y_{0})$ exists,  we get
\be \label{9}|\nu(y_{0}) - \nu_{x,2r}| = \lim _{\rho \to 0} \,\,  \dashint_{B_{\rho}(y_{0})} | \nu(z) - \nu_{x,2r}| \, d \mu(z) \leq C_{1} \, \epsilon. \ee
But $\nu(y_{0})$ is a unit vector, and thus by (\ref{9}) and remembering that $\epsilon \leq \epsilon_{1}$, we get
\be \label{b'} \big| |\nu_{x,2r}| - 1 \big| =  \big| |\nu_{x,2r}| -|\nu(y_{0})|  \big| \leq  \big| \nu_{x,2r} -\nu(y_{0})  \big| \leq C_{1} \, \epsilon  \leq C_{1} \, \epsilon_{1}. \ee

Choosing $\epsilon_{1}$ small enough (such that 
$ C_{1} \, \epsilon_{1} \leq \frac{1}{2}$), (\ref{b'}) becomes
$ \big| |\nu_{x,2r}| - 1 \big| \leq \frac{1}{2},$
that is, $ |\nu_{x,2r}| \geq \displaystyle \frac{1}{2}.$ Since $x \in M$ and $0 < r \leq \displaystyle \frac{1}{20}$ were arbitrary, the proof is done.
\end{proof}

As we mentioned before, the construction of the bi-Lipschitz map relies heavily on finding good approximating $n$-planes to $M$. By that, we mean that for a point $x \in M$, and a scale $0<r<\frac{1}{20}$, we would like to find an $n$-plane $P(x,r)$ (not necessarily passing through $x$) such that $M \cap B_{r}(x)$ is close to $P(x,r)$. In the following theorem, with the help of Lemma \ref{l4} and the Poincar\'e-type inequality, we construct a plane $P_{x,r}$ that turn out to be, up to a small translation (as we see later), the plane $P(x,r)$ that we aim to get. \\

\begin{theorem}
\label{t1} Let $M \subset B_{1}(0)$ be an $n$-Ahlfors regular rectifiable set containing the origin, and let $\mu =$  \( \mathcal{H}^{n} \mres M\) be the Hausdorff measure restricted to $M$. Assume that $M$ satisfies the Poincar\'{e}-type inequality (\ref{eqp}).
Let $\epsilon_{1}$ be as in Lemma \ref{l4}, and let $0 < \epsilon \leq \epsilon_{1}$. Suppose there exists a choice of unit normal $\nu$ to $M$ such that 
 \ben \int_{0}^{1} \left(\,\dashint_{B_{r}(x)} |\nu(y) - \nu_{x,r}|^{2} \, d \mu \right) \frac{dr}{r} < \epsilon^{2}, \quad \forall x \in M.\een
Then, for every $x \in M$ and $0< r \leq \displaystyle \frac{1}{20}$, there exists an affine $n$-dimensional plane $P_{x,r}$, whose normal is $\nu_{x,2r}$, and such that 
\be \label{121} \dashint_{B_{r}(x)} \frac{d(y,P_{x,r})}{r} \, d \mu(y) \leq 4 C_{P} \, \alpha(x,2r),\ee
\end{theorem}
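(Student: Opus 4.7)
My plan is to apply the Poincar\'e-type inequality (\ref{eqp}) to the affine ``signed-distance'' function associated to the direction $\nu_{x,2r}$, and then read off the plane $P_{x,r}$ as a level set of this function. Since $r \leq \tfrac{1}{20}$, we have $2r \leq \tfrac{1}{10}$, so Lemma \ref{l4} (after choosing $\epsilon_{1}$ small enough) yields $|\nu_{x,2r}| \geq \tfrac{1}{2}$. I can therefore set $\tilde{\nu} := \nu_{x,2r}/|\nu_{x,2r}|$, a well-defined unit vector with $1/|\nu_{x,2r}| \leq 2$, and define the Lipschitz (in fact linear) function $\tilde{f}:\mathbb{R}^{n+1} \to \mathbb{R}$ by $\tilde{f}(y) := \langle y, \tilde{\nu}\rangle$. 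The level sets of $\tilde{f}$ are affine $n$-planes orthogonal to $\tilde{\nu}$, i.e.\ with normal in the direction of $\nu_{x,2r}$, as demanded by the theorem.

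The key computation is the tangential gradient. Since $\tilde{f}$ is smooth with constant Euclidean gradient $\tilde{\nu}$, formula (\ref{td}) gives $\nabla^{M}\tilde{f}(y) = \pi_{T_{y}M}(\tilde{\nu})$. Because $\nu(y)$ is the unit normal to $T_{y}M$, we have $\pi_{T_{y}M}(\nu(y)) = 0$, and so
\[ |\nabla^{M}\tilde{f}(y)| = \frac{1}{|\nu_{x,2r}|}\bigl|\pi_{T_{y}M}\bigl(\nu_{x,2r} - \nu(y)\bigr)\bigr| \leq \frac{1}{|\nu_{x,2r}|}|\nu(y) - \nu_{x,2r}| \leq 2\,|\nu(y) - \nu_{x,2r}|. \]
Plugging this bound into (\ref{eqp}) at the point $x$ with radius $r$, and recognizing the definition of $\alpha(x,2r)$, I get
\[ \dashint_{B_{r}(x)} |\tilde{f}(y) - \tilde{f}_{x,r}|\, d\mu(y) \leq C_{P}\, r \left(\,\dashint_{B_{2r}(x)} |\nabla^{M}\tilde{f}|^{2}\, d\mu\right)^{1/2} \leq 2 C_{P}\, r\, \alpha(x,2r). \]

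To conclude, I would take $P_{x,r}$ to be the affine hyperplane $\{z \in \mathbb{R}^{n+1} : \langle z, \tilde{\nu}\rangle = \tilde{f}_{x,r}\}$; it has unit normal $\tilde{\nu}$, hence its normal direction agrees with $\nu_{x,2r}$. Since $\tilde{\nu}$ is a unit vector, one checks that $d(y,P_{x,r}) = |\tilde{f}(y) - \tilde{f}_{x,r}|$ for every $y \in \mathbb{R}^{n+1}$, so dividing the previous estimate by $r$ yields (\ref{121}) with constant $2C_{P} \leq 4 C_{P}$. There is really no obstacle here: the whole argument is a one-line application of Poincar\'e to the correct linear test function, and the only delicate point is guaranteeing $|\nu_{x,2r}| \geq 1/2$ so that the normalization $\tilde{\nu}$ is available, which is exactly what Lemma \ref{l4} and the scale restriction $r \leq \tfrac{1}{20}$ provide.
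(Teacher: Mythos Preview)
Your proof is correct and follows essentially the same route as the paper: apply the Poincar\'e inequality (\ref{eqp}) to the linear function $y\mapsto\langle y,\nu_{x,2r}\rangle$ (or its normalization), use $\pi_{T_yM}(\nu(y))=0$ to bound the tangential gradient by $2|\nu(y)-\nu_{x,2r}|$, and read off $P_{x,r}$ as the level set through the center of mass $c_{x,r}$. The only cosmetic difference is that you normalize $\nu_{x,2r}$ at the outset while the paper normalizes at the end when converting $|\langle y-c_{x,r},\nu_{x,2r}\rangle|$ into $d(y,P_{x,r})$; your ordering in fact yields the sharper constant $2C_P$.
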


\begin{proof}
Fix $x \in M$ and $r \leq \displaystyle \frac{1}{20}$. Let $\epsilon_{1}$, $\epsilon$, and $\nu$ be as above. Consider the function $f$ on $\mathbb{R}^{n+1}$ defined by 
\ben f(y) = \left<y, \nu_{x,2r}\right>, \,\,\,\,\,\,  y \in \mathbb{R}^{n+1}.\een
Notice that $f$ is a smooth function on $\mathbb{R}^{n+1}$, and for every point $y \in M$ where the unit normal $\nu(y)$ exists, (which is almost everywhere in $M$ by Theorem \ref{atp} and the fact that $M$ is rectifiable), we have \be \label{1}|\nabla^{M}f(y)| \leq 2 \, | \nu_{x,2r} - \nu(y)|. \ee In fact,
\ben \nabla^{M}f(y) = \nabla f(y) - \left<\nabla f(y) , \nu(y) \right> \nu(y). \een
But $\nabla f(y) = \nu_{x,2r}$, so
 \besn |\nabla^{M}f(y)| &=& |\nu_{x,2r} - \left<\nu_{x,2r} , \nu(y) \right> \nu(y)| \\
& = & |\nu_{x,2r} - \nu(y) - \left<\nu_{x,2r} - \nu(y), \nu(y) \right> \nu(y)| \\
& \leq& 2 \, | \nu_{x,2r} - \nu(y)|, \eesn
where in the last two steps, we used the fact that $\nu(y)$ is a unit vector.\\

Now, applying the Poincar\'{e} inequality on the function $f$ and the ball $B_{r}(x)$, and using (\ref{1}), we get 

\be \label{2} \frac{1}{r}  \,\dashint_{B_{r}(x)}  \left| \left<y,\nu_{x,2r}\right> - \,\dashint_{B_{r}(x)} \left<z ,\nu_{x,2r}\right> \, d \mu(z)\right| d \mu(y)  \leq 2 C_{P} \left( \,\dashint_{B_{2r}(x)} \left|\nu_{x,2r} - \nu(y)\right|^{2} d \mu(y)\right)^{\frac{1}{2}}.\ee

But $\nu_{x, 2r}$ is a constant vector, so (\ref{2}) can be rewritten as
\be \label{3} \frac{1}{r}  \,\dashint_{B_{r}(x)}  \left| \left<y,\nu_{x,2r}\right> - \left< \,\dashint_{B_{r}(x)} z \, d \mu(z),\nu_{x,2r}\right>\right| d \mu(y) \leq 2 C_{P} \left( \,\dashint_{B_{2r}(x)} \left|\nu_{x,2r} - \nu(y)\right|^{2} d \mu(y) \right)^{\frac{1}{2}} \ee

that is,
\be \label{4} \frac{1}{r} \, \dashint_{B_{r}(x)} \left|\left<y - \, \dashint_{B_{r}(x)} z \, d\mu(z) ,\nu_{x,2r}\right> \right| d \mu(y)  \leq 2 C_{P} \left( \,\dashint_{B_{2r}(x)} \left|\nu_{x,2r} - \nu(y)\right|^{2} d \mu(y) \right)^{\frac{1}{2}}. \ee

We are now ready to choose our plane $P_{{x,r}}$. Let us notice first, that since $x \in M$ and $2r \leq \displaystyle \frac{1}{10}$, (\ref{124}) in Lemma \ref{l4} says that 
\be \label{123} |\nu_{x,2r}|  \geq \frac{1}{2}.\ee

Now, take $P_{x,r}$ to be the plane passing through the point $ c_{x,r} := \,\dashint_{B_{r}(x)} z \, d \mu(z) $, the centre of mass of $\mu$ in the ball $B_{r}(x)$, and whose normal is $\nu_{x,2r}$ (which is possible by (\ref{123})). \\
\\
Then, using (\ref{123}), we have that for every $y \in B_{r}(x)$
\bes \label{5} d(y, P_{x,r}) &=& \left| \left< y - c_{x,r} , \frac{\nu_{x,2r}}{|\nu_{x,2r}|}\right> \right| \nonumber \\
& \leq & 2 \, \left|  \left< y - c_{x,r} , \nu_{x,2r} \right>\right| \nonumber \\
&=& 2 \, \left|  \left< y - \,\dashint_{B_{r}(x)} z \, d \mu(z) , \nu_{x,2r} \right>\right|. \ees

Dividing by $r$ and taking the average over $B_{r}(x)$ on both sides of (\ref{5}), we get 

\besn  \dashint_{B_{r}(x)} \frac{d(y, P_{x,r})}{r} \, d \mu(y)  &\leq & \, 2 \, \frac{1}{r} \,\, \dashint_{B_{r}(x)}  \left|  \left< y - \,\dashint_{B_{r}(x)} z \, d \mu(z)  , \nu_{x,2r} \right>\right|\, d \mu(y) \\
& \leq &  4 C_{P} \, \left(\, \dashint_{B_{2r}(x)} \left|\nu_{x,2r} - \nu(y)\right|^{2} d \mu\right)^{\frac{1}{2}},   \eesn
where the last inequality comes from (\ref{4}).\\

Thus, by the definition of $\alpha(x,2r)$ (see (\ref{a'})), we get (\ref{121}) and the proof is done.
\end{proof}

To start the proof of Theorem \ref{MTT'}, we want to use the construction of the bi-Lipschitz map given by David and Toro in their paper \cite{DT1}. For that, we need to introduce what we call a \textbf{coherent collection of balls and planes}. Here we follow the steps given by David and Toro (see \cite{DT1}, chapter 2). \\

First, set $r_{k} = 10^{-k-4}$ for $ k \in \mathbb{N}$, and let $\epsilon$ be a small number (will be chosen later) that depends only on $n$. Choose a collection $\{x_{jk} \}, \, \, j \in J_{k}$ of points in $\mathbb{R}^{n+1}$, so that

\be \label{59} |x_{jk} - x_{ik}| \geq r_{k} \,\,\,\,\, \txt{for}\,\,\, i,j \in J_{k}, \, i \neq j. \ee

Set $B_{jk} := B_{r_{k}}(x_{jk})$ and $V_{k}^{\lambda} := \displaystyle \bigcup_{j \in J_{k}} \lambda B_{jk} =  \displaystyle \bigcup_{j \in J_{k}}  B_{\lambda r_{k}}(x_{jk}),\,$ for $\lambda > 1$.\\

We also ask for our collection  $\{x_{jk} \}, \, \, j \in J_{k}$ and $k \geq 1$ to satisfy
\be \label{60} x_{jk} \in V_{k-1}^{2} \,\,\,\,\,\,\, \txt{for}\,\,\, k \geq 1 \,\,\, \txt{and} \,\,\, j \in J_{k}. \ee

Suppose that our initial net $\{x_{j0} \}$ is close to an $n$-dimensional plane $\Sigma_{0}$, that is
\be \label{101} d(x_{j0},\Sigma_{0}) \leq \epsilon \,\,\,\,\,\,\,\,\, \forall\, j \in J_{0}.\ee

For each $k \geq 0$ and $j \in J_{k}$, suppose you have an $n$-dimensional plane $P_{jk}$, passing through $x_{jk}$ such that the following compatibility conditions hold:\\

\be \label{102} d_{x_{i0},100r_{0}}(P_{i0}, \Sigma_{0}) \leq \epsilon \,\,\,\, \txt{for} \,\, i \in J_{0}, \ee

\be \label{74} d_{x_{ik},100r_{k}}(P_{ik},P_{jk}) \leq \epsilon \,\,\,\, \txt{for}\, k\geq 0 \,\,\,\txt{and} \,\,\, i,j \in J_{k} \,\,\, \txt{such that} \,\,\, |x_{ik} - x_{jk}| \leq 100r_{k},\ee

and

\be \label{75} d_{x_{ik},20r_{k}}(P_{ik},P_{j,k+1}) \leq \epsilon \,\,\, \txt{for}\, k\geq 0 \,\,\,\txt{and} \,\,\, i \in J_{k},\,j \in J_{k+1} \,\, \txt{such that} \,\,\, |x_{ik} - x_{j,k+1}| \leq 2r_{k}.\ee

We can now define a \textbf{coherent collection of balls and planes}:

\begin{definition}
A \textbf{coherent collection of balls and planes}, (in short a CCBP), is a triple $(\Sigma_{0}, \{B_{jk} \}, \{P_{jk}\})$ where the properties (\ref{59}) up to (\ref{75}) above are satisfied, with a prescribed $\epsilon$ that is small enough, and depends only on $n$.
\end{definition}

\begin{theorem} \label{t2}  (see Theorems 2.4 in \cite{DT1})
Let  $(\Sigma_{0}, \{B_{jk} \}, \{P_{jk}\})$ be a CCBP, and assume $\epsilon$ is small enough, depending on $n$. Then, there exists a bijection $g: \mathbb{R}^{n+1} \rightarrow \mathbb{R}^{n+1}$ with the following properties:
\be g(z)= z \quad \txt{when} \,\,\, d(z, \Sigma_{0}) \geq 2, \ee
and
\be |g(z)-z| \leq C_{0}^{'} \epsilon \quad \txt{for} \,\,\, z \in \mathbb{R}^{n+1}, \ee
where $C_{0}^{'}$ is a constant depending only on $n$. \\
Moreover, $g(\Sigma_{0})$ is a $C_{0}^{'} \epsilon$-Reifenberg flat set that contains the accumulation set 
\besn E_{\infty} = &\{x&  \in \mathbb{R}^{n+1}; \,\, x\,\, \txt{can be written as} \\ &x& = \lim_{m \to \infty} x_{j(m),k(m)}, \,\, \txt{with}\,\,k(m) \in \mathbb{N}, \\ &\txt{and}& \,\, j(m) \in J_{k_{m}} \,\, \txt{for}\,\, m \geq 0 \,\, \txt{and} \,\, \lim_{m \to \infty} k(m) = \infty\} .\eesn
\end{theorem}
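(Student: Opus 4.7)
The plan is to construct $g$ as the uniform limit of an infinite composition $g = \lim_m \sigma_m \circ \cdots \circ \sigma_0$ of smooth $O(\epsilon)$-perturbations of the identity, one per dyadic scale $k$. The geometric input is that the compatibility conditions (\ref{101})--(\ref{75}) force consecutive local approximating planes (at comparable scale $r_k$) to differ by at most $O(\epsilon r_k)$ in normalized Hausdorff distance, which makes the scale-by-scale corrections summable; this is the scheme of David--Toro.

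First, I would fix a partition of unity $\{\theta_{jk}\}_{j \in J_k}$ at each scale $k$, subordinate to a fixed dilate of the balls $B_{jk}$, with $|\nabla \theta_{jk}| \leq C r_k^{-1}$ and bounded overlap (from the separation (\ref{59})). Define
\be \sigma_k(z) = z + \sum_{j \in J_k} \theta_{jk}(z)\bigl(\pi_{j,k+1}(z) - \pi_{jk}(z)\bigr), \ee
where $\pi_{jk}$ is orthogonal projection onto $P_{jk}$. Conditions (\ref{74}) and (\ref{75}) then yield $|\pi_{j,k+1}(z) - \pi_{jk}(z)| \leq C\epsilon r_k$ uniformly on $\mathrm{supp}(\theta_{jk})$, whence $\|\sigma_k - \mathrm{id}\|_\infty \leq C\epsilon r_k$ and $\|D\sigma_k - I\|_\infty \leq C\epsilon$, with $\sigma_k = \mathrm{id}$ outside a fixed dilate of $V_k$. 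For $\epsilon$ small, $\sigma_k$ is a $C^1$-diffeomorphism. Setting $g_m = \sigma_m \circ \cdots \circ \sigma_0$, uniform convergence follows from $\|g_m - g_{m-1}\|_\infty \leq C\epsilon r_m$ and $\sum_k r_k < \infty$, and telescoping yields $|g(z) - z| \leq C_0'\epsilon$. The identity $g(z) = z$ for $d(z, \Sigma_0) \geq 2$ follows by propagating (\ref{101}) through (\ref{60}) and (\ref{75}) to obtain $d(x_{jk}, \Sigma_0) \leq C\epsilon \leq 1$ for all $j,k$, so such a $z$ lies outside every $\mathrm{supp}(\theta_{jk})$ and every $\sigma_k$ fixes it.

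The delicate point is bijectivity. Each $g_m$ is a diffeomorphism, but the naive bound $\|Dg_m - I\| \leq C\epsilon m$ diverges as $m \to \infty$. The way out is to localize: for a fixed $z$, only those scales $k$ at which $g_{k-1}(z)$ lies within $O(r_k)$ of the accumulation set of $\{x_{jk}\}$ contribute nontrivially to $Dg_m(z)$, and their contributions decay. This produces bi-H\"older bounds
\ben c|z-w|^{1+C\epsilon} \leq |g(z) - g(w)| \leq C|z-w|^{1-C\epsilon}, \een
which suffice for $g$ to be a homeomorphism; surjectivity then follows from properness plus a degree argument, since $g$ equals the identity outside a bounded set. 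This localization is the technical heart of the proof and the main obstacle.

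Finally, for the Reifenberg flatness of $g(\Sigma_0)$ and the inclusion $E_\infty \subset g(\Sigma_0)$: given $x \in g(\Sigma_0)$ and scale $r$, pick $k$ with $r_k \sim r$ and $j \in J_k$ with $|x_{jk} - x| \leq C r$ (available because $\{x_{jk}\}_j$ is an $r_k$-net of the accumulation set and $|g - \mathrm{id}| \leq C_0'\epsilon$). The design of the $\sigma_\ell$'s, combined with the bound $|g - \mathrm{id}| \leq C_0'\epsilon$, shows $g(\Sigma_0) \cap B_r(x)$ lies within $C_0'\epsilon r$ of $P_{jk}$, giving Reifenberg flatness with constant $C_0'\epsilon$. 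For $E_\infty$: any $x = \lim_m x_{j(m),k(m)}$ has preimages $y_m := g_{k(m)}^{-1}(x_{j(m),k(m)})$, which lie near $\Sigma_0$ because each $\sigma_k$ sends $P_{jk}$ near $P_{j,k+1}$ and $P_{j0}$ is within $\epsilon$ of $\Sigma_0$; by the uniform bi-H\"older bounds on $g_m^{-1}$ the sequence $y_m$ is Cauchy with limit $y \in \Sigma_0$ satisfying $g(y) = x$.
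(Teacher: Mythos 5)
The paper does not actually prove Theorem~\ref{t2}; it cites David and Toro's Theorem~2.4 and only records, just after the statement, the shape of the construction, namely $\sigma_k(y)=y+\sum_{j\in J_k}\theta_{jk}(y)\,[\pi_{jk}(y)-y]$ as in~(\ref{sigmas}), with $g$ obtained as the extension of $f=\lim_k f_k$ from $\Sigma_0$ to all of $\mathbb{R}^{n+1}$. Your high-level plan (infinite composition of $O(\epsilon)$-perturbations, the compatibility conditions forcing summability, telescoping, degree argument for surjectivity, and then Reifenberg flatness of the image) is indeed the David--Toro scheme, so you have the right reference argument in mind. But the formula you write for $\sigma_k$ is wrong and does not typecheck: you set $\sigma_k(z)=z+\sum_{j\in J_k}\theta_{jk}(z)\,(\pi_{j,k+1}(z)-\pi_{jk}(z))$, yet $j$ ranges over $J_k$ while $P_{j,k+1}$ is only defined for $j\in J_{k+1}$ --- a different index set, with no natural bijection supplied by~(\ref{59})--(\ref{60}). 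The correct map pushes toward the \emph{current}-scale plane: $\sigma_k(y)=y+\sum_{j\in J_k}\theta_{jk}(y)[\pi_{jk}(y)-y]$.

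Second, even for the correct $\sigma_k$, your uniform bound $\|\sigma_k-\mathrm{id}\|_\infty\le C\epsilon r_k$ is false as stated. On $\mathrm{supp}(\theta_{jk})\subset 10B_{jk}$ one only has $|\pi_{jk}(y)-y|=d(y,P_{jk})\le 10 r_k$. The step size is $O(\epsilon r_k)$ only for $y$ on the already-pushed surface $f_k(\Sigma_0)$, which lies within $C\epsilon r_k$ of the planes $P_{jk}$ by induction using~(\ref{75}); that is also why, in the reference, the estimates are obtained on the moving set and $g$ is defined globally by an extension step rather than by applying the composition to all of $\mathbb{R}^{n+1}$. Your derived bound $\|D\sigma_k-I\|_\infty\le C\epsilon$ inherits the same restriction.

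Finally, on injectivity: your sketch says only those scales where $g_{k-1}(z)$ is close to the accumulation set contribute and that ``their contributions decay.'' That conflates two different mechanisms. For points at distance $\delta$ from the accumulation set only scales with $r_k\gtrsim\delta$ act, giving a genuinely Lipschitz bound there; but for pairs of points near the accumulation set all scales act and one accumulates a multiplicative factor $\prod_k(1+C\epsilon)$, which is precisely what produces the bi-H\"older exponent $1\pm C\epsilon$ rather than a Lipschitz bound. Nothing ``decays''; what saves the argument is the localization of which $\sigma_k$ move which points, and that is the delicate part you correctly identify but do not supply. As a guide to the David--Toro argument your proposal is recognizable, but the $\sigma_k$ formula must be corrected, the uniform step-size bound must be localized to $f_k(\Sigma_0)$, and the bi-H\"older estimate needs to be proved rather than gestured at.
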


In \cite{DT1}, David and Toro give a sufficient condition for $g$ to be bi-Lipschitz that we want to use in our proof. However, in order to state this condition, we need some technical details from the construction of the map $g$ from Theorem \ref{t2}. So, let us briefly discuss the construction here: David and Toro defined a mapping $f$ whose goal is to push a small neighbourhood of $\Sigma_{0}$ towards a final set, which they proved to be Reifenberg flat. They obtained $f$ as a limit of the composed functions $f_{k} = \sigma_{k-1} \circ \ldots \sigma_{0}$
where each $\sigma_{k}$ is a smooth function that moves points near the planes $P_{jk}$ at the scale $r_{k}$. More precisely, 
\be \label{sigmas} \sigma_{k} (y) = y + \sum_{j \in J_{k}} \theta_{jk}(y)[\pi_{jk}(y)-y], \ee
where $\{\theta_{jk}\}_{j \in J_{k}, k\geq 0}$ is a partition of unity with each $\theta_{jk}$ supported on $10B_{jk}$, and $\pi_{jk}$ denotes the orthogonal projection from $\mathbb{R}^{n}$ onto the plane $P_{jk}$.\\

\noindent Since $f$ in their construction was defined on $\Sigma_{0}$, $g$ was defined to be the extension of $f$ on the whole space.\\
 
\begin{corollary} \label{cr1} (see Proposition 11.2 in \cite{DT1}) Suppose we are in the setting of Theorem \ref{t2}. Define the quantity
\be  \label{nasty} \begin{split} \epsilon^{'}_{k}(y) &= \\ &sup\{d_{x_{im},100 r_{m}}(P_{jk},P_{im}); \,\,\, j \in J_{k}, \,\, i \in J_{m},\,\,\, m \in \{k,k-1\}, \,\,\txt{and}\,\, y \in 10B_{jk} \cap 11B_{im} \} \end{split} \ee
for $k \geq 1 \,\, \txt{and} \,\,y \in V_{k}^{10}$, and $\epsilon_{k}^{'}(y)=0 \,\, \txt{when}\,\, y \in \mathbb{R}^{n+1} \setminus V_{k}^{10}$ (when there are no pairs $(j,k)$ as above). 
If there exists $N > 0$ such that
 \be \label{89} \sum_{k=0}^{\infty} \epsilon^{'}_{k}(f_{k}(z)) ^{2} < N,  \ee
 then the map $g$ constructed in Theorem \ref{t2} is $K$-bi-Lipschitz, where the bi-Lipschitz constant $K$ depends only on $n$ and $N$.
\end{corollary}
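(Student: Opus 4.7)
The plan is to follow the bi-Lipschitz criterion that David and Toro establish in \cite{DT1}, Section 11. Since $g$ extends the limit $f = \lim_k f_k$ with uniform bounds (Theorem \ref{t2}), and since each $f_k$ is a smooth composition, the bi-Lipschitz property of $g$ reduces to uniform control of $\|Df_k(z)\|$ and $\|Df_k(z)^{-1}\|$ over all $k$ and $z$. By the chain rule these are products $\prod_{j < k} D\sigma_j(f_j(z))$, so the crux is a pointwise derivative estimate for each $\sigma_k$.

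The first step is to differentiate (\ref{sigmas}) and observe that on the support of any $\theta_{jk}$ the map $\sigma_k$ is an average of orthogonal projections onto the nearby planes $P_{ik}$, plus terms involving $\nabla \theta_{jk}$. The compatibility condition (\ref{74}) forces these planes to agree up to angle $\epsilon'_k(y)$, so $D\sigma_k(y)$ decomposes as an orthogonal projection onto a single approximate $n$-plane $P$ plus an error whose operator norm is $\lesssim \epsilon'_k(y)$. In particular, the restriction of $D\sigma_k$ to directions tangent to $P$ is within $\epsilon'_k$ of an isometry, while its action on normal directions is uniformly small.

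The second step, where the $\ell^2$ strength of the hypothesis (\ref{89}) becomes essential in place of the stronger $\ell^1$ summability, is to exploit the between-scale compatibility (\ref{75}). When $D\sigma_k$ is composed with the previous layer $Df_{k-1}$, the first-order tilt that $\sigma_k$ produces relative to the plane $P_{j,k-1}$ already used by $\sigma_{k-1}$ is orthogonal, to leading order, to the direction in which $Df_{k-1}$ has moved; consequently the increment $\log \|Df_k(z)\| - \log \|Df_{k-1}(z)\|$ is quadratic rather than linear in $\epsilon'_k(f_k(z))$, with constant depending only on $n$. Telescoping in $k$ and using (\ref{89}) gives
\[
\big|\log \|Df_k(z)\|\big| \;\leq\; C(n) \sum_{j=0}^{k-1} \epsilon'_j(f_j(z))^2 \;\leq\; C(n)\, N,
\]
and the same bound holds for $(Df_k(z))^{-1}$, producing a uniform bi-Lipschitz constant $K = K(n, N)$ for every $f_k$ and hence, in the limit, for $g$.

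The main technical obstacle is precisely this quadratic cancellation in the third paragraph: extracting $\epsilon'_k{}^2$ rather than $\epsilon'_k$ requires a careful linearization of $\sigma_k$ about the plane used at scale $k-1$, together with a symmetry argument showing that the symmetric (projection-like) part of $D\sigma_k$ contributes only at second order to $\log\|Df_k\|$ after composition. This is exactly the content of Proposition 11.2 of \cite{DT1}. Since in our situation we are already assuming that $(\Sigma_0, \{B_{jk}\}, \{P_{jk}\})$ is a CCBP and that (\ref{89}) holds uniformly in $z$, the cited proposition applies verbatim and delivers the bi-Lipschitz conclusion with the stated dependence of the constant on $n$ and $N$.
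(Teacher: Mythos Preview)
The paper does not prove this corollary at all; it is stated purely as a citation of Proposition~11.2 in \cite{DT1} and used as a black box in the proof of Theorem~\ref{MTT'}. Your final paragraph, which simply invokes that proposition once the CCBP hypotheses and (\ref{89}) are in hand, is already exactly what the paper does. The preceding three paragraphs of your proposal go further than the paper by sketching the David--Toro mechanism (derivative control of each $\sigma_k$ and the quadratic cancellation that allows $\ell^2$ rather than $\ell^1$ summability); that sketch is a faithful outline of the argument in \cite{DT1}, but it is additional exposition, not something the present paper supplies or requires.
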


We are finally ready to prove Theorem \ref{MTT'}.

\textbf{\underline{\textit{Proof of Theorem \ref{MTT'}:}}}

\begin{proof}
Let $\epsilon_{0} > 0$ (to be determined later), and suppose there is a choice of unit normal $\nu$ to $M$ such that (\ref{103}) holds. We would like to apply Theorem \ref{t2} and then Corollary \ref{cr1}. So our first goal is to construct a CCBP, and we do that in several steps:\\
 Let us start with a collection $\{\tilde{x}_{jk}\},\, j \in J_{k}$ of points in $M \cap B_{\frac{1}{10^{3}}}(0)$ that is maximal under the constraint \be \label{62}|\tilde{x}_{jk} - \tilde{x}_{ik}| \geq \displaystyle\frac {4r_{k}}{3} \,\,\,\,\txt{when}\,\, i,j \in J_{k}\,\,\, \txt{and}\,\,\,i \neq j.\ee
Of course, we can arrange matters so that the point $0$ belongs to our initial maximal set, at scale $r_{0}$. Thus, $0 = \tilde{x}_{i_{0},0} $ for some $i_{0} \in J_{0}$. Notice that for every $k \geq 0$, we have 
\be \label{63} M  \cap B_{\frac{1}{10^{3}}}(0)\subset \displaystyle \bigcup_{j \in J_{k}}\bar{B}_{\frac{4r_{k}}{3}}(\tilde{x}_{jk}).\ee
\\
Later, (see (\ref{68})), we choose \be \label{61} x_{jk} \in M \cap B_{ \frac{r_{k}}{6}}(\tilde{x}_{jk}), \,\,\,\,\,\, j \in J_{k} .\ee

By (\ref{63}) and (\ref{61}), we can see

\be \label{63'} M  \cap B_{\frac{1}{10^{3}}}(0)\subset \displaystyle \bigcup_{j \in J_{k}}\bar{B}_{\frac{4r_{k}}{3}}(\tilde{x}_{jk}) \subset \displaystyle \bigcup_{j \in J_{k}}B_{\frac{3r_{k}}{2}}(x_{jk}) .\ee

 Let us prove that such a collection $\{x_{jk} \},\,\,\, j \in J_{k}$ satisfies 
(\ref{59}) and (\ref{60}):\\

To see (\ref{59}), we proceed by contradiction. Suppose $|x_{jk} - x_{ik}| < r_{k}$ for some \,$i,j \in J_{k}, \, \, \txt{with} \,\,i\neq j$
Then, by (\ref{61}),
\ben | \tilde{x}_{jk} - \tilde{x}_{ik}| \leq |\tilde{x}_{jk} - x_{jk}| + |x_{jk} - x_{ik}| + |x_{ik} - \tilde{x}_{ik}| <  \frac{r_{k}}{6} + r_{k} +  \frac{r_{k}}{6} = \frac{4r_{k}}{3} \een
which contradicts (\ref{62}). This proves (\ref{59}). \\
\\

To see (\ref{60}), fix $x_{j,k+1} $ with $k\geq 0$ and $j \in J_{k+l}$. By construction and (\ref{63'}), we have
\be \label{61'}\tilde{x}_{j,k+1} \in M \cap B_{\frac{1}{10^{3}}}(0)\subset \displaystyle \bigcup_{i \in J_{k}}B_{\frac{3r_{k}}{2}}(x_{ik}) .\ee 

Using (\ref{61}) and (\ref{61'}), we get 

\ben x_{j,k+1} \in \displaystyle \bigcup_{i \in J_{k}}B_{2r_{k}}(x_{ik}) = V_{k}^{2}.\een
\\
Thus, (\ref{60}) is satisfied.\\

Next, we choose our planes $P_{jk}$ and our collection $\{ x_{jk} \}$, for $k \geq 0$ and $ j \in J_{k}$.
\\
Fix $k\geq 0$ and $j \in J_{k}$. Let $\epsilon_{1}$ be the constant from Lemma \ref{l4}. For
\be \label{c} \epsilon_{0} \leq \epsilon_{1}, \ee
we apply Theorem \ref{t1} to the point $\tilde{x}_{jk}$ (by construction  $\tilde{x}_{jk} \in M$) and radius $120r_{k}$ (notice that $120\,r_{k} \leq \frac{1}{20}$) to get an $n$-plane $P_{\tilde{x}_{jk},120r_{k}}$, denoted in this proof by $ P^{'}_{jk}$ for simplicity reasons, whose normal is $\nu_{\tilde{x}_{jk},240r_{k}}$ (recall from lemma \ref{l4} that $|\nu_{\tilde{x}_{jk},240r_{k}}| \geq \displaystyle \frac{1}{2}$) such that 

\be \label{66} \dashint_{B_{120r_{k}}(\tilde{x}_{jk})} \frac{d(y, P^{'}_{jk} )}{120r_{k}} \, d \mu  \leq  \, C(C_{P}) \, \alpha(\tilde{x}_{jk}, 240r_{k}). \ee

Thus, by (\ref{66}) and the fact that $\mu$ is Ahlfors regular, there exists $x_{jk} \in M \cap B_{ \frac{r_{k}}{6}}(\tilde{x}_{jk})$ such that 

\bes \label{68} d(x_{jk},P^{'}_{jk}) &\leq& \dashint_{B_{ \frac{r_{k}}{6}}(\tilde{x}_{jk})}d(y, P^{'}_{jk} ) \, d \mu \nonumber \\ &\leq&  C(n, C_{M}) \, \dashint_{B_{120r_{k}}(\tilde{x}_{jk})} d(y, P^{'}_{jk} ) \, d \mu \leq \, C(n, C_{M}, C_{P}) \, \alpha(\tilde{x}_{jk}, 240r_{k}) \,r_{k}. \ees

Let $P_{jk}$ be the plane parallel to $P^{'}_{jk}$ and passing through $x_{jk}$. Thus, $P_{jk}$ has normal line  $\nu_{\tilde{x}_{jk},240r_{k}}$ and passes through $x_{jk}$. From (\ref{68}) and the fact that the two planes are parallel, it is clear that 
\be \label{69} d_{\tilde{x}_{jk},240r_{k}}(P_{jk},P^{'}_{jk}) \leq C(n, C_{M}, C_{P})\,  \alpha(\tilde{x}_{jk}, 240r_{k}). \ee
\\
Moreover, for every $y \in B_{120r_{k}}(\tilde{x}_{jk})$, we have by the triangle inequality and (\ref{69})
\bes \label{70} d(y,P_{jk}) &\leq& d(y,P^{'}_{jk}) + c \, d_{\tilde{x}_{jk},240r_{k}}(P_{jk},P^{'}_{jk})\, r_{k} \nonumber \\
&\leq& d(y,P^{'}_{jk}) + C(n, C_{M}, C_{P})\,  \alpha(\tilde{x}_{jk}, 240r_{k}) \, r_{k}.\ees

Dividing both sides of (\ref{70}) by $120r_k$ and taking the average over $B_{120r_{k}}(\tilde{x}_{jk})$, we get 
\be \label{72} \dashint_{ B_{120r_{k}}(\tilde{x}_{jk})} \frac{d(y,P_{jk})}{120r_{k}} \, d \mu \leq  \dashint_{ B_{120r_{k}}(\tilde{x}_{jk})} \frac{d(y,P^{'}_{jk})}{120r_{k}} \, d \mu+  C(n, C_{M}, C_{P})\,  \alpha(\tilde{x}_{jk}, 240r_{k}), \ee
which by (\ref{66}) becomes 
\be \label{73} \dashint_{ B_{120r_{k}}(\tilde{x}_{jk})} \frac{d(y,P_{jk})}{120r_{k}} \, d \mu \leq  C(n, C_{M}, C_{P})\,  \alpha(\tilde{x}_{jk}, 240r_{k}). \ee

To summarize what we did so far, we have chosen $n$-dimensional planes $P_{jk}$ for $k\geq 0$ and $j \in J_{k}$ where each $P_{jk}$ has normal line is $\nu_{\tilde{x}_{jk},240r_{k}}$, passes through $x_{jk}$, and satisfies (\ref{73}).\\

We proceed by proving (\ref{102}), (\ref{74}), and (\ref{75}), starting with (\ref{74}) and (\ref{75}) .\\

We prove (\ref{74}) and (\ref{75}) simultaneously here. So, let us fix $k \geq 0$ and $j \in J_{k}$; let $m \in \{k, k-1 \}$ and $i \in J_m$ such that 
\be \label{76} |x_{jk} - x_{im}| \leq 100r_{m}.\ee

We want to show that $P_{jk}$ and $P_{im}$ are close together. To do that, we construct $n$ linearly independent vectors that ``effectively'' span $P_{jk}$, that is, these vectors span $P_{jk}$, and they are far away from each other (in a uniform quantitative manner).  Then, we show that $P_{im}$ is close to each of these vectors. This idea is very similar to the ``effectively'' spanning idea found in \cite{NV1} (see p. 26-28).\\
Let us start by proving the existence of such vectors in the following claim. Here is where we use lemma \ref{l1}.\\
\\\
 \textbf{\textit{Claim:}} 
Denote by  $\pi_{jk}$ is the orthogonal projection of $\mathbb{R}^{n+1}$ on the plane $P_{jk}$. Let $r = c_{0} \, r_{k}$, where $c_{0} \leq \displaystyle \frac{1}{2}$ is the constant from Lemma \ref{l1} depending only on $n$ and $C_{M}$. Then, there exists a sequence of $n+1$ balls $\{B_{r}(y_{l})\}_{l=0}^{n}$,  such that 
\begin{enumerate}
\item $\forall \, l \in \{ 0, \ldots n\}$, we have $y_{l} \in M$ and $B_{r}(y_{l}) \subset B_{2r_{k}}(\tilde{x}_{jk}).$
\item  $q_{1} - q_{0} \notin B_{5r}(0)$, and $\forall \,  l \in \{ 2, \ldots n\}$, we have $q_{l} - q_{0} \notin N_{5r}\big(span \{q_{1} - q_{0}, \ldots, q_{l-1} - q_{0}  \}\big),$
\end{enumerate}
where $q_{l} = \pi_{jk}(p(y_{l}))$ and $p(y_{l}) = \dashint_{B_{r}(y_{l})}z \, d\mu(z)$ is the centre of mass of $\mu$ in the ball $B_{r}(y_{l})$.\\

We prove this claim by induction:\\

For $l=0$, take $y_{0} = \tilde{x}_{jk}$ (recall that both $k$ and $j$ are fixed here). In this case, item 1 is trivial, and item 2 is not applicable. Thus, we have our points $y_{0}, \, p(y_{0}), \,$ and $q_{0}$.\\
\\
Now, let $r = c_{0} \, r_{k}$ as in Lemma \ref{l1}, where we have applied the lemma on $x_{0} =\tilde{x}_{jk}$ and $r_{0} = r_{k}$. Recall that the constant $c_{0}$ we get from Lemma \ref{l1} is as desired (that is $c_{0} \leq \displaystyle \frac{1}{2}$ depending only on $n$ and $C_{M}$.)
For $i =1$, we apply Lemma \ref{l1} for $V = \{\tilde{x}_{jk}\}$, to get a point $y_{1} \in M \cap B_{r_{k}}(\tilde{x}_{jk})$ such that $y_{1} \notin B_{11\,r}(\tilde{x}_{jk})$ and $B_{r}(y_{1}) \subset B_{2r_{k}}(\tilde{x}_{jk})$. So item 1 is satisfied, and now we have our points $p(y_{1})$ and $q_{1}$.\\ 
\\
For item 2, we need to prove that 
\be \label{38} |q_{1} - q_{0}| \geq 5r.\ee
In fact, we have for $l \in \{0,1\}$, by the definition of $p(y_{l})$, Jensen's inequality applied on the convex function $\phi(.) = d(.,P_{jk})$, the fact that $\mu$ is Ahlfors regular, $B_{r}(y_{l}) \subset B_{2r_{k}}(\tilde{x}_{jk})$, $r = c_{0}\, r_{k}$, and (\ref{73}), that 

\bes \label{33} d\big(p(y_{l}),P_{jk}\big) &=& d\bigg( \, \dashint_{B_{r}(y_{l})}z \, d\mu(z), P_{jk} \bigg) \nonumber \\
&\leq& \dashint_{B_{r}(y_{l})} d(z, P_{jk} ) \, d \mu(z) \nonumber \\ 
&\leq& C(n, C_{M}) \, \dashint_{B_{120\,r_{k}}(\tilde{x}_{jk})} d(z, P_{jk} ) \, d \mu(z) 
\leq  C(n, C_{M}, C_{P}) \, \alpha(\tilde{x}_{jk},240 \,r_{k}) \, r_{k}.\ees

Also, by the definition of the center of mass, we know that 
\be \label{34} |y_{l} - p(y_{l})| \leq r \,\,\,\,\,\,\,\,\,\, l \in \{0,1\}. \ee

Thus, by the triangle inequality, (\ref{34}), and (\ref{33}), we get for $\,l \in \{0,1\}$
\bes \label{35''} |y_{l}-q_{l}| &\leq& |y_{l}- p(y_{l})| + |p(y_{l})-q_{l}| \nonumber\\
&=& |y_{l}- p(y_{l})| + d\big(p(y_{l}),P_{jk}\big) \nonumber\\
&\leq& r + C(n, C_{M}, C_{P})\, \alpha(\tilde{x}_{jk},240 \,r_{k}) r_{k}.\ees

Notice now, that by (\ref{103}), (\ref{a''}) in Lemma \ref{l5}, the fact that $\tilde{x}_{jk} \in M \cap B_{\frac{1}{10^{3}}}(0)$ and $240r_{k} \displaystyle \leq \frac{1}{10}$, we have 
\be \label{132}  \alpha(\tilde{x}_{jk},240r_{k}) \leq C(n, C_{M}) \, \epsilon_{0}. \ee

Plugging (\ref{132}) in (\ref{35''}), and using the fact that $r = c_{0}\, r_{k}$,  we get for $\,l \in \{0,1\}$
\be \label{35'} |y_{l}-q_{l}|
\leq r + C(n, C_{M}, C_{P})\,\epsilon_{0} \,r_{k} 
=  r + C(n, C_{M}, C_{P})\, \epsilon_{0} \, r.\ee

Let us denote by $C_{1}$ the constant $C(n, C_{M}, C_{P})$ from the last step of (\ref{35'}). Then, rewriting (\ref{35'}), we get
$ |y_{l}-q_{l}| \leq  r + C_{1}\, \epsilon_{0} \, r $. For $\epsilon_{0}$ such that $ C_{1} \,\epsilon_{0} < 1$, we get

\be \label{37} |y_{l}-q_{l}| \leq 2r \,\,\,\,\,\,\,\,\,\,\,\, l \in \{0,1\}. \ee

We are now ready to prove (\ref{38}):\\

Let us proceed by contradiction. Suppose that $ |q_{1} - q_{0}| < 5r$, then by (\ref{37}), we get
\besn |y_{1} - y_{0}| &\leq& |y_{1} - q_{1}| +  |q_{1} - q_{0}| + |y_{0} - q_{0}| \\
&\leq& 2r + 5r + 2r = 9r.\eesn
But $y_{1} \notin B_{11r}(\tilde{x}_{jk}) = B_{11r}(y_{0})$ by construction. Thus, we get a contradiction, and (\ref{38}) is proved.\\
\\
For our induction step, assume the statement is true for $l-1$, and let's prove it for $l$. Consider the $(l-1)$-dimensional affine subspace 
\ben V^{l-1} = span \{ q_{1} - q_{0}, \ldots q_{l-1} - q_{0}\} + q_{0}. \een

Notice that our last induction process is when we have $n$ points and want to construct the $(n+1)^{st}$ point. Thus, $l-1 \leq n-1$, and we can apply Lemma \ref{l1}, on the subspace $V^{l-1}$, to get a point $y_{l} \in M \cap B_{r_{k}}(\tilde{x}_{jk})$ such that $y_{l} \notin N_{11\,r}(V^{l-1})$. 
So, we have that
\be \label{39} y_{l} - q_{0} \notin   N_{11r}\big( span \{ q_{1} - q_{0}, \ldots q_{l-1} - q_{0}\}\big).\ee
Item 1 is clearly true. To prove item 2, we show that
\be \label{40} q_{l} - q_{0} \notin  N_{5r}\big( span \{ q_{1} - q_{0}, \ldots q_{l-1} - q_{0}\}\big). \ee

In fact, by the exact same calculations as above \big(see (\ref{33}), (\ref{34}), and (\ref{37})\big), we see that

\be \label{41} d\big(p(y_{l}),P_{jk}\big) \leq  C(n, C_{M}, C_{P}) \, \alpha(\tilde{x}_{jk},240\,r_{k})\,r_{k},\ee

\be \label{42} |y_{l}-p(y_{l})| \leq r,  \ee
and

\be \label{43} |y_{l}-q_{l}| \leq 2r .\ee

Let us now prove (\ref{40}) by contradiction:\\
Suppose that $q_{l} - q_{0} \in  N_{5r}\big( span \{ q_{1} - q_{0}, \ldots q_{l-1} - q_{0}\}\big)$, then, using (\ref{43}), we get

\ben \begin{split}
d\big(y_{l} - q_{0}, span \{ q_{1} - q_{0}, \ldots &, q_{l-1} - q_{0}\}\big ) \\ &\leq d(y_{l} - q_{0}, q_{l}-q_{0}) + d\big(q_{l} - q_{0}, span \{ q_{1} - q_{0}, \ldots q_{l-1} - q_{0}\}\big )\\
&= |y_{l}-q_{l}|+ d\big(q_{l} - q_{0}, span \{ q_{1} - q_{0}, \ldots q_{l-1} - q_{0}\}\big )\\
&\leq 2r+5r = 7r < 11r .\end{split}
\een

which is a contradiction by (\ref{39}). Thus, induction process is complete, and so is the proof of the claim $\blacksquare$\\

From the construction in the claim above, notice that 
\be \label{78} P_{jk} - q_{0} = span  \{q_{1} - q_{0} , \ldots, q_{n} - q_{0}  \} .\ee

Also, by (\ref{33}) and (\ref{41}), we have  $\forall \, l \in \{ 0, \ldots n\}$
\be \label{77} d\big(p(y_{l}),P_{jk}\big) \leq  C(n, C_{M}, C_{P}) \, \alpha(\tilde{x}_{jk},240r_{k})\,r_{k},\ee

and by (\ref{37}), and recalling that $y_{0} = \tilde{x}_{jk}$ we have
\be \label{86} |y_{0}-q_{0}| = |\tilde{x}_{jk}- q_{0}| \leq 2r .\ee
\\
Let us remember that our goal is to prove that $P_{jk}$ and $P_{im}$ are close to each other. In the claim, we constructed an ``effective'' spanning set for $P_{jk}$, $ \{q_{1} - q_{0}, \ldots, q_{n} - q_{0}  \}  $. Now, we can get a nice upper bound on the distance from each $q_{l}$ to $P_{im}$, for $\, l \in \{ 0, \ldots n\}$.\\

In fact, by the definition of the center of mass, Jensen's formula, the fact that $\mu$ is Ahlfors regular, $B_{r}(y_{l}) \subset B_{120r_{m}}(\tilde{x}_{im})$ (see item 1, (\ref{76}), (\ref{61}), and recall that $r < r_{k} \leq r_m$), $r = c_{0} \, r_{k}$ and $r_{m} \in \{ r_{k}, 10r_{k}\}$, and (\ref{73}) for $P_{im}$, to get that for every $ \, l \in \{ 0, \ldots n\}$

\bes \label{79} d\big(p(y_{l}), P_{im}\big) &=& d\bigg( \, \dashint_{B_{r}(y_{l})}z \, d\mu(z),P_{im}\bigg) \nonumber \\
&\leq& \dashint_{B_{r}(y_{l})} d(z, P_{im} ) \, d \mu(z) \nonumber \\ 
&\leq& C(n, C_{M})\dashint_{B_{120r_{m}}(\tilde{x}_{im})} d(z, P_{im} ) \, d \mu(z)
\leq  C(n, C_{M}, C_{P}) \, \alpha(\tilde{x}_{im},240r_{m}) \, r_{m}.\ees

Combining (\ref{77}) and (\ref{79}), we get by the triangle inequality that for every $ \, l \in \{ 0, \ldots n\}$

\bes \label{80} d\big(q_{l},P_{im}\big) &\leq& |q_{l} - p(y_{l})| + d\big(p(y_{l}),P_{im}\big) \nonumber \\
&=&  d\big(p(y_{l}), P_{jk}\big) + d\big(p(y_{l}),P_{im}\big) \nonumber \\
&\leq& C(n, C_{M}, C_{P}) \, \big( \alpha(\tilde{x}_{jk},240r_{k})\,r_{k} + \alpha(\tilde{x}_{im},240r_{m}) \, r_{m}\big).
\ees

We are finally ready to compute the distance between $P_{jk}$ and $P_{im}$. Let $y \in P_{jk} \cap B_{\rho}(x_{im})$ where $\rho \in \{20r_m, 100r_{m}\}$.
By (\ref{78}), $y$ can be written uniquely as $ y -q_{0} = \displaystyle \sum_{l=1}^{n} \beta_{l}(q_{l} - q_{0})$, that is
\be \label{82} y  = q_{0} + \sum_{l=1}^{n} \beta_{l}(q_{l} - q_{0}).\ee 

We want to apply Lemma \ref{l3}, for $u_{l} = q_{l} - q_{0}$, $R = r$, and $v = y - q_{0}$. In fact, (\ref{44}) and (\ref{45}) are satisfied directly from item 2 for $k_{0} = 5$. To see (\ref{44'}), note that by (\ref{43}), (\ref{37}), the fact that $y_{0}$ and $y_{l} \in B_{2r_{k}}(\tilde{x}_{jk})$ from item 1, for $ l \in \{ 1, \ldots , n \}$, and $r = c_{0} \, r_{k}$, we have
\be \label{mm''} |q_{l} - q_{0}| \leq  |q_{l} - y_{l}| + |y_{l} - y_{0}| + |y_{0} - q_{0}| \leq 4r + 2r_{k} \leq C_{2}\, r ,\ee
where $C_{2}$ is a (fixed) constant depending only on $n$ and $C_{M}$. \\

For $K_{0} = C_{2}$ where $K_{0}$ the constant in the statement of Lemma \ref{l3}, we get by Lemma \ref{l3} that
\be \label{84} |\beta_{l}| \leq K_{1} \, \frac{1}{r}\,|y-q_{0}| \,\,\,\,\, \forall \, l \in \{ 1, \ldots n\}.\ee

However, by (\ref{76}), (\ref{61}), (\ref{86}), and remembering that $r< r_{k} \leq r_{m}$, we have

\bes \label{85} |y- q_{0}| &\leq& |y- x_{im}| + |x_{im} - x_{jk}| + |x_{jk} - \tilde{x}_{jk}| + |\tilde{x}_{jk} - q_{0}| \nonumber\\
&\leq& \rho + 100 r_{m} +  \frac{r_{k}}{6} + 2r \leq 203 r_m .
 \ees

But $r = c_{0}\, r_{k}$, and $r_{m} = \{r_{k}, 10r_{k}\}$, and thus, combining (\ref{84}) and (\ref{85}), we get
\be \label{87}  |\beta_{l}| \leq C(n, C_{M}). \ee

So, using (\ref{82}), (\ref{87}), and (\ref{80}), we get
\bes  d\big(y, P_{im}\big) &\leq& (1 +  \sum_{l=1}^{n} |\beta_{l}|) \,d\big(q_{0}, P_{im}\big) + \sum_{l=1}^{n} |\beta_{l}| \, d\big(q_{l}, P_{im}\big) \nonumber\\ 
&\leq& C(n, C_{M})\, \bigg( d\big(q_{0}, P_{im}\big) + \sum_{l=1}^{n}   d\big(q_{l}, P_{im}\big) \bigg) \nonumber\\ 
&\leq& C(n, C_{M}, C_{P}) \, \bigg( \alpha(\tilde{x}_{jk},240r_{k})\,r_{k} + \alpha(\tilde{x}_{im},240r_{m}) \, r_{m}\bigg)
\ees

Thus, 
\be \label{88}d_{x_{im}, \rho} (P_{jk}, P_{im}) \leq  C(n, C_{M}, C_{P})\, \bigg( \alpha(\tilde{x}_{jk},240r_{k}) + \alpha(\tilde{x}_{im},240r_{m})\bigg)\,\,\,\,\, \rho \in\{20r_m, 100r_{m}\}. \ee
And so, our planes $P_{jk}$ and $P_{im}$ are close. In fact, by (\ref{132}), we know that  
\be \label{133} \alpha(\tilde{x}_{jk},240r_{k}) \leq C(n, C_{M})\, \epsilon_{0}. \ee
Similarly, we have 
\be \label{134} \alpha(\tilde{x}_{im},240r_{m}) \leq C(n, C_{M}) \, \epsilon_{0}. \ee
Plugging (\ref{133}) and (\ref{134}) in (\ref{88}), we get 
\be \label{135} d_{x_{im}, \rho} (P_{jk}, P_{im}) \leq  C(n, C_{M}, C_{P}) \epsilon_{0}, \quad \rho \in\{20r_m, 100r_{m}\}. \ee

So, we have shown that there exists two constants $C_{3}$ and $C_{4}$, each depending on $n$, $C_{M}$ and $C_{P}$, such that
\be \label{74'} d_{x_{ik},100r_{k}}(P_{ik},P_{jk}) \leq C_{3} \,\epsilon_{0} \,\,\,\, \txt{for}\, k\geq 0 \,\,\,\txt{and} \,\,\, i,j \in J_{k} \,\,\, \txt{such that} \,\,\, |x_{ik} - x_{jk}| \leq 100r_{k},\ee

and

\be \label{75'} d_{x_{ik},20r_{k}}(P_{ik},P_{j,k+1}) \leq C_{4} \,\epsilon_{0} \,\,\, \txt{for}\, k\geq 0 \,\,\,\txt{and} \,\,\, i \in J_{k},\,j \in J_{k+1} \,\, \txt{such that} \,\,\, |x_{ik} - x_{j,k+1}| \leq 2r_{k}.\ee

For 
\be \label{c'''} C_{3} \,\epsilon_{0} \leq \epsilon \quad \txt{and} \quad C_{4} \,\epsilon_{0} \leq \epsilon, \ee
we get (\ref{74}) and (\ref{75}). \\
\\
We now prove (\ref{102}). Recall that $0 = \tilde{x}_{i_{0},0}$ for some $i_{0} \in J_{0}$. Choose $\Sigma_{0}$ to be the plane $P_{i_{0},0}$ described above (that is $P_{i_{0},0}$ has normal $\nu_{\tilde{x}_{i_{0},0},240r_{0}}$ and passes through $x_{i_{0},0}$, where $r_{0} = 10^{-4}$). Then, what we need to prove is 

\be \label{102'} d_{x_{j0},100r_{0}}(P_{j0}, P_{i_{0},0}) \leq \epsilon \,\,\,\, \txt{for} \,\, j \in J_{0}. \ee

Fix $j \in J_{0}$, and take the corresponding $x_{j0}$. Since by construction $|\tilde{x}_{j0}| < \displaystyle \frac{1}{10^{3}}$ and since (\ref{61}) says that $ |x_{j_{0},0} -  \tilde{x}_{j_{0},0}| \leq \displaystyle \frac{r_{0}}{6}$, then, we have
\be \label{1''}|x_{j0}| \leq \frac{r_{0}}{6} + \frac{1}{10^{3}},\,\,\,\,\,\, j \in J_{0}.\ee
Moreover, by (\ref{61}) and the fact that $0 = \tilde{x}_{i_{0},0}$ , we have
\be \label{2''} |x_{i_{0},0} -  \tilde{x}_{i_{0},0}| = |x_{i_{0},0}| \leq \frac{r_{0}}{6}.\ee
Combining (\ref{1''}) and (\ref{2''}), and using the fact that $r_{0} = 10^{-4}$ we get
\be |x_{j0} - x_{i_{0},0} | \leq \frac{r_{0}}{6} +  \frac{1}{10^{3}} + \frac{r_{0}}{6} \leq \frac{r_{0}}{6} +  10 r_{0} + \frac{r_{0}}{6} \leq 100r_{0}. \ee
Thus, by (\ref{74}) for $x_{ik} = x_{j0}$, $P_{ik} = P_{j0}$, and $P_{jk} = P_{i_{0},0}$, we get exactly (\ref{102'}), hence finishing the proof for (\ref{102}).\\
\\
It remains to prove (\ref{101}), that is 
\be \label{101'} d(x_{j0}, P_{i_{0},0}) \leq \epsilon,\quad \txt{for}\,\, j \in J_{0}. \ee

By Markov's inequality, we know that
\ben \label{3''} \mu \left ( x \in B_{120r_{0}}(\tilde{x}_{i_{0},0}); d(x,P_{i_{0}0})\geq \alpha^{\frac{1}{2}} (\tilde{x}_{i_{0},0}, 240r_{0}) \right) \leq \frac{1}{ \alpha^{\frac{1}{2}}(\tilde{x}_{i_{0},0}, 240r_{0})} \int_{B_{120r_{0}}(\tilde{x}_{i_{0},0})} d(y,P_{i_{0}0}) \een

But since $\tilde{x}_{i_{0},0} = 0$, and by using (\ref{73}) with the fact that $\mu$ is Ahlfors regular, and (\ref{103}) with (\ref{a''}) from Lemma \ref{l5} and the fact that $240r_{0} \leq \frac{1}{10}$, we get 

\bes \label{4''}   \mu \left(x \in B_{120r_{0}}(0) ; \,\,\, d(x,P_{i_{0}0}) \geq \alpha^{\frac{1}{2}}(0,240r_{0})\right) &\leq& \frac{1}{ \alpha^{\frac{1}{2}}(0,240r_{0})} \int_{B_{120r_{0}}(0)} d(y,P_{i_{0}0})\, d \mu \nonumber \\
&=& \frac{\mu(B_{120r_{0}}(0))}{ \alpha^{\frac{1}{2}}\left(0,240r_{0}\right)} \,\, \dashint_{B_{120r_{0}}(0)} d(y,P_{i_{0}0}) \nonumber \\
&\leq& C(n, C_{M}, C_{P}) \,  \alpha^{\frac{1}{2}}\left(0,240r_{0}\right) \nonumber \\ &\leq& C(n, C_{M}, C_{P})\, \epsilon_{0} ^{\frac{1}{2}}.
\ees

Now, take a point $z \in M \cap B_{120r_{0}}(0)$. We consider two cases:\\
Either 
\be \label{5'''} d(z ,P_{i_{0}0}) \leq \alpha^{\frac{1}{2}}\left(0, 240r_{0}\right) \ee
or
\be \label{5'} d(z ,P_{i_{0}0}) > \alpha^{\frac{1}{2}}\left(0, 240r_{0}\right). \ee

In the first case, combining (\ref{5'''}) with (\ref{103}) and (\ref{a''}), we get 

\be \label{6} d(z ,P_{i_{0}0}) \leq C(n, C_{M}) \,\epsilon_{0}^{\frac{1}{2}} .\ee

In case of (\ref{5'}), let $\rho$ be the biggest radius such that \ben B_{\rho}(z) \subset  \left\{ x \in B_{120r_{0}}(0) ; \,\,\, d(x,P_{i_{0}0}) > \alpha^{\frac{1}{2}}\left(0, 240r_{0}\right)\right\}. \een

Now, since $z \in M$ and $\mu$ is Ahlfors regular, we get using (\ref{4''}) that

\be \label{8''} C_{M} \, \rho^{n} \leq \mu(B_{\rho}(z)) \leq C(n, C_{M}, C_{P}) \,\epsilon_{0}^{\frac{1}{2}}. \ee

Thus, relabelling, (\ref{8''}) becomes

\be \label{9''} \rho \leq C(n, C_{M}, C_{P}) \, \epsilon_{0}^{\frac{1}{2n}}.\ee

On the other hand, since $\rho$ is the biggest radius such that $B_{\rho}(z) \subset \\ \left\{ x \in B_{120r_{0}}(0) ; d(x,P_{i_{0}0}) > \alpha^{\frac{1}{2}}\left(0, 240r_{0}\right)\right\}$, then there exists $x_{0} \in \partial B_{\rho}(z)$ such that 
\be \label{5''} d(x_{0} ,P_{i_{0}0}) \leq \alpha^{\frac{1}{2}}\left(0, 240r_{0}\right). \ee

Thus, by (\ref{5''}), (\ref{9''}) and (\ref{103}) together with (\ref{a''}), we get

\bes \label{7''} d(z ,P_{i_{0}0}) &\leq& |z-x_{0}| +  d(x_{0} ,P_{i_{0}0}) \nonumber \\
&=& \rho +  d(x_{0} ,P_{i_{0}0}) \leq  C(n, C_{M}, C_{P}) \, \epsilon_{0}^{\frac{1}{2n}}+ \alpha^{\frac{1}{2}}\left(0, 240r_{0}\right) 
\leq  C(n, C_{M}, C_{P})  \epsilon_{0}^{\frac{1}{2n}}. \ees

Combining (\ref{6}) and (\ref{7''}), we get that 
\be  \label{10''''}  d(z ,P_{i_{0}0}) \leq C_{5} \,  \epsilon_{0}^{\frac{1}{2n}} \quad \txt{for}\,\, z \in M \cap B_{120r_{0}}(0), \ee
where $C_{5}$ is a (fixed) constant depending only on $n$, $C_M$, and $C_{P}$.

We are now ready to prove (\ref{101'}). Fix $j \in J_{0}$, and take the corresponding $x_{j0}$. Since by construction $|\tilde{x}_{j0}| < \displaystyle \frac{1}{10^{3}}$ and since (\ref{61}) says that $ |x_{j_{0},0} -  \tilde{x}_{j_{0},0}| \leq \displaystyle \frac{r_{0}}{6}$, then, remembering that $r_{0} = 10^{-4}$, we have
\ben |x_{j0}| \leq \frac{r_{0}}{6} + \frac{1}{10^{3}} \leq 11 r_{0},\quad j \in J_{0}. \een
Thus,
\be \label{1'} x_{j0} \in M \cap B_{11r_{0}}(0) \subset M \cap B_{120r_{0}}(0).  \ee

For $z = x_{j0}$ in (\ref{10''''}), and for $ C_{5} \,  \epsilon_{0}^{\frac{1}{2n}} \leq \epsilon,$
we get

\be \label{11'}  d(x_{j0} ,P_{i_{0}0}) \leq \epsilon \,\,\,\,\,\,\,\,j \in J_{0}, \ee
which is exactly (\ref{101'}).\\

Fix $\epsilon_{0}$ such that (\ref{c}), (\ref{c'''}), the line before (\ref{37}), and the line before (\ref{11'}) are all satisfied. Then, we finally have our CCBP. Now, by the proof of Theorem \ref{t2} (see paragraph above (\ref{sigmas})) we get the smooth maps $\sigma_{k} \,\, \txt{and} \,\, f_{k} = \sigma_{k-1} \circ \ldots \sigma_{0} \,\, \txt{for} \,\, k \geq 0$, and then the map $f = \displaystyle \lim_{k \to \infty} f_{k}$ defined on $\Sigma_{0}$, and finally the map $g$ that we want.
 
Moreover, by Theorem \ref{t2}, we know that $g: \mathbb{R}^{n+1} \rightarrow \mathbb{R}^{n+1}$ is a bijection with the following properties:
\be \label{aa1} g(z)= z \,\,\, \,\,\, \txt{when} \,\,\, d(z, \Sigma_{0}) \geq 2, \ee
\be \label{bb1} |g(z)-z| \leq C_{0}^{'} \epsilon \,\,\,\,\,\,\, \txt{for} \,\,\, z \in \mathbb{R}^{n+1}, \ee
and \be \label{cc1} g(\Sigma_{0}) \,\, \txt{is a } \,\,\, C_{0}^{'} \epsilon \txt{-Reifenberg flat set}. \ee 

Notice that by the choice of $\epsilon_{0}$, we can write 
 $\epsilon_{0} = c_{6}\, \epsilon$, 
where $c_{6}$ is a constant depending only on $n$, $C_{M}$, and $C_{P}$. Hence, from (\ref{aa1}), (\ref{bb1}), (\ref{cc1}), we directly get (\ref{aa}), (\ref{bb}), and (\ref{cc}).
\\

We next show that 
\be \label{12''} M \cap B_{\frac{1}{10^{3}}}(0) \subset  g(\Sigma_{0}). \ee Fix $x \in  M \cap B_{\frac{1}{10^{3}}}(0) $. Then, by (\ref{63'}), we see that for all $k \geq 0$, there exists a point $x_{jk}$ such that $|x - x_{jk}| \leq \displaystyle \frac{3r_{k}}{2}$, and hence $x \in E_{\infty} \subset g(\Sigma_{0})$ ($E_{\infty}$ is the set defined in Theorem \ref{t2}). Since $x$ was an arbitrary point in $M \cap B_{\frac{1}{10^{3}}}(0)$, (\ref{12''}) is proved.\\

We still need to show that $g$ is bi-Lipschitz. By Corollary \ref{cr1}, it suffices to show (\ref{89}).In order to do that, we need the following inequality from \cite{DT1} (see inequality (6.8) page 27 in \cite{DT1} \footnote{Inequality (6.8) in \cite{DT1} has a $C\epsilon$ in front of $r_{k}$; however, $\epsilon$ was later chosen so that $C \epsilon \leq 1$ which gives us our inequality (\ref{in}) above.}).\\

\be \label{in} |f(z) - f_{k}(z)| \leq r_{k} \quad \txt{for}\,\, k \geq 0\,\, \txt{and} \,\, z \in \Sigma_{0}.\ee

Let $z \in \Sigma_{0}$, and choose $\bar{z} \in M \cap B_{\frac{1}{10^{3}}}(0)$ such that 
\be \label{90} |\bar{z} - f(z)| \leq 2 \, d(f(z),M \cap B_{\frac{1}{10^{3}}}(0)). \ee
Fix $k \geq 0$, and consider the index $m  \in \{k,k-1\}$ and the indices $j \in J_{k}$ and $i \in J_{m}$ such that $f_{k}(z) \in 10B_{jk} \cap 11B_{im}$. We show that
\be \label{91}  d_{x_{im},100 r_{m}}(P_{jk},P_{im}) \leq C(n, C_{M}, C_{P}) \, \alpha(\bar{z},r_{k-4}) \quad \txt{for} \,\, k \geq 1. \ee

Notice that by (\ref{90}) and (\ref{in}), and since $\tilde{x}_{jk} \in M \cap B_{\frac{1}{10^{3}}}(0)$, $|\tilde{x}_{jk} - x_{jk}| \leq \displaystyle  \frac{r_{k}}{6}$, and $f_{k}(z) \in 10B_{jk}$, we have 
\bes \label{94} |\bar{z} - f_{k}(z)| &\leq& |\bar{z} - f(z)| + |f(z) - f_{k}(z)| \nonumber \\
&\leq& 2 \,d(f(z),M \cap B_{\frac{1}{10^{3}}}(0)) + |f(z) - f_{k}(z)| \nonumber \\
&\leq& 2 \, d(f_{k}(z),M \cap B_{\frac{1}{10^{3}}}(0)) + 3|f(z) - f_{k}(z)| \nonumber \\
&\leq& 2 \, |f_{k}(z)-\tilde{x}_{jk}| + 3r_{k} \nonumber \\
&\leq& 2 \, |f_{k}(z)-x_{jk}| + |\tilde{x}_{jk} - x_{jk}|+ | + 3r_{k} \nonumber \\
&\leq&20r_{k} +  \frac{r_{k}}{6} + 3r_{k} \leq 24r_{k}.
\ees

Thus, \be \label{93} B_{240r_{k}}(\tilde{x}_{jk}) \subset B_{r_{k-4}}(\bar{z}). \ee
In fact, for $a \in B_{240r_{k}}(\tilde{x}_{jk}) $, we have by (\ref{61}), the fact that $f_{k}(z) \in 10B_{jk}$, and (\ref{94}), that 
\besn |a - \bar{z}| &\leq& |a - \tilde{x}_{jk}| + |\tilde{x}_{jk} - x_{jk}| + |x_{jk} - f_{k}(z)| + |f_{k}(z) - \bar{z}| \\
&\leq& 240r_{k} +  \frac{r_{k}}{6} + 10r_{k} + 24r_{k} \leq r_{k-4}. \eesn

Similarly, we can show that 
 \be \label{95} B_{240r_{m}}(\tilde{x}_{im}) \subset B_{r_{k-4}}(\bar{z}). \ee

Thus, by (\ref{93}) and (\ref{95}), we have
 \be \label{96} B_{240r_{m}}(\tilde{x}_{im}) \cup  B_{240r_{k}}(\tilde{x}_{jk}) \subset B_{r_{k-4}}(\bar{z}). \ee

But, using (\ref{10}) for $ a = \nu_{\bar{z},r_{k-4}}$, (\ref{96}), and the fact that $\mu$ is Ahlfors regular
\besn \alpha^{2}(\tilde{x}_{jk}, 240r_{k}) &=& \dashint_{B_{240r_{k}}(\tilde{x}_{jk})} |\nu(y) - \nu_{\tilde{x}_{jk},240r_{k}}|^{2}\, d \mu\\ 
&\leq& \dashint_{B_{240r_{k}}(\tilde{x}_{jk})}  |\nu(y) - \nu_{\bar{z},r_{k-4}}|^{2}\, d \mu \\
&\leq& C(n, C_{M})\, \dashint_{B_{r_{k-4}}(\bar{z})}  |\nu(y) - \nu_{\bar{z},r_{k-4}}|^{2}\, d \mu 
= C(n, C_{M})\, \alpha^{2}(\bar{z},r_{k-4}),
\eesn 
and thus,
\be \label{97}  \alpha(\tilde{x}_{jk}, 240r_{k})  \leq C(n, C_{M})\,  \alpha(\bar{z},r_{k-4}). \ee

Similarly, we can show that 
\be \label{98}  \alpha(\tilde{x}_{im}, 240r_{m})  \leq C(n, C_{M})\,  \alpha(\bar{z},r_{k-4}). \ee

Plugging (\ref{97}) and (\ref{98}) in (\ref{88}) for $\rho = 100r_{m}$, we get
\be \label{99}  d_{x_{im},100 r_{m}}(P_{jk},P_{im}) \leq C(n, C_{M}, C_{P}) \, \alpha(\bar{z},r_{k-4}), \quad \forall k \geq 1. \ee

This finishes the proof of (\ref{91}).\\
\\
Hence, we have shown that 
$\epsilon^{'}_{k}(f_{k}(z)) \leq C(n, C_{M}, C_{P}) \,  \alpha(\bar{z},r_{k-4})$ for every $k\geq 1$, that is
\be \label{136}  \epsilon^{'}_{k}(f_{k}(z))^{2} \leq C(n, C_{M}, C_{P}) \,  \alpha^{2}(\bar{z},r_{k-4}), \quad \forall k\geq 1\ee
Summing both sides of (\ref{136}) over $k \geq 0$, and using (\ref{a}) in Lemma \ref{l5} together with the fact that $\bar{z} \in M\cap B_{\frac{1}{10^{3}}}(0)$, we get
\be \label{100} \sum_{k=0}^{\infty} \epsilon^{'}_{k}(f_{k}(z))^{2} \leq 1 + C(n, C_{M}, C_{P}) \, \sum_{k=10}^{\infty} \alpha^{2}(\bar{z},r_{k-4}) \leq 1 + C(n, C_{M}, C_{P}) \, \epsilon^{2}_{0} \,\, := N.
\ee
Inequality (\ref{89}) is proved, and our theorem follows.

\end{proof}

\begin{remark} \label{cargen}
Notice that to prove Theorem \ref{MTT'}, the Carleson condition (\ref{103}) was needed to get the two inequalites
 
\be \label{a''again} \alpha(x,r) \leq C \, \epsilon_{0} \quad \forall \, x \in M \cap B_{\frac{1}{10^{3}}}(0) \quad \txt{and} \quad \forall \, 0 <  r \leq \frac{1}{10}\ee

and 

 \be \label{aagain} \sum_{j=1}^{\infty} \alpha^{2}(x, 10^{-j}) \leq C \, \epsilon^{2}_{0} , \quad \forall \, x \in M \cap B_{\frac{1}{10^{3}}}(0), \ee 

where $C = C(n, C_{M})$.
While (\ref{a''again}) was used all through this section, the only place (\ref{aagain}) was used was for inequality (\ref{100}). Now, notice that proving (\ref{100}) did not require that $ \displaystyle \sum_{j=1}^{\infty} \alpha^{2}(x, 10^{-j})$ be small, but just be finite (with the same upper bound for all points $\bar{z} \in M \cap B_{\frac{1}{10^{3}}}(0)$). Thus, Theorem \ref{MTT'} could be restated as follows: 
\end{remark}

\begin{theorem} \label{MTT'again}
Let $M \subset B_{1}(0)$ be an $n$-Ahlfors regular rectifiable set containing the origin, and let $\mu =$  \( \mathcal{H}^{n} \mres M\) be the Hausdorff measure restricted to $M$. Assume that $M$ satisfies the Poincar\'{e}-type inequality (\ref{eqp}). There exists $\epsilon_{0} = \epsilon_{0}(n, C_{M}, C_{P})>0$, such that if there exists a choice of unit normal $\nu$ to $M$ so that

\be \label{103again} \alpha(x,r)  < \epsilon_{0}  \quad \forall \, x \in M \cap B_{\frac{1}{10^{3}}}(0) \quad \txt{and} \quad \forall \, 0 <  r \leq \frac{1}{10},\ee
 then there exists a bijective map $g: \mathbb{R}^{n+1} \rightarrow \mathbb{R}^{n+1}$, and an $n$-dimensional plane $\Sigma_{0}$, with the following properties:
\ben g(z)= z \quad \txt{when} \,\,\, d(z, \Sigma_{0}) \geq 2, \een
and
\ben |g(z)-z| \leq C_{0} \epsilon_{0} \quad \txt{for} \,\,\, z \in \mathbb{R}^{n+1}, \een
where $C_{0} = C_{0}(n, C_{M}, C_{P})$,
\ben g(\Sigma_{0})\,\, \txt{is a} \,\,\, C_{0} \epsilon_{0} \txt{-Reifenberg flat set}, \een and 
\ben M \cap B_{\frac{1}{10^{3}}}(0) \subset g(\Sigma_{0}). \een

Moreover, if in addition to the assumptions above, there exists $N \in \mathbb{N}$ such that
\ben \sum_{j=1}^{\infty} \alpha^{2}(x, 10^{-j}) \leq N, \quad \forall \, x \in M \cap B_{\frac{1}{10^{3}}}(0), \een 
 then the map $g$ is $K$ bi-Lipschitz, where the bi-Lipschitz constant $K = K(n, C_{M}, C_{P}, N)$.

\end{theorem}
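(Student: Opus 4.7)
The plan is to run the proof of Theorem \ref{MTT'} essentially verbatim, substituting the two hypotheses of Theorem \ref{MTT'again} for the two consequences of the Carleson condition that were actually used, as diagnosed in Remark \ref{cargen}. First I would verify that Lemma \ref{l4} still gives $|\nu_{x,r}| \geq 1/2$: its proof only needed $\alpha(x,2r) \leq C(n,C_M)\,\epsilon$ for the particular $x, r$ at hand, which is now supplied directly by hypothesis (\ref{103again}) provided I take $\epsilon_0 \leq \epsilon_1$. Consequently, Theorem \ref{t1} continues to apply and produces, for every $x \in M$ and $r \leq 1/20$, an $n$-plane $P_{x,r}$ through the center of mass $c_{x,r}$ with normal $\nu_{x,2r}$ satisfying the key integral estimate (\ref{121}).

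Next I would build the CCBP exactly as before: pick a maximal $\tfrac{4}{3}r_k$-separated net $\{\tilde{x}_{jk}\}$ in $M \cap B_{1/10^3}(0)$ (forcing $0 = \tilde{x}_{i_0,0}$), perturb each $\tilde{x}_{jk}$ to $x_{jk} \in B_{r_k/6}(\tilde{x}_{jk}) \cap M$ so that (\ref{59}) and (\ref{60}) hold, and take $P_{jk}$ parallel to $P_{\tilde{x}_{jk},120r_k}$ through $x_{jk}$. The claim producing the effective spanning set $\{q_l - q_0\}_{l=1}^n$ via Lemma \ref{l1}, and the subsequent bound (\ref{80}) via Lemma \ref{l3}, go through unchanged; the only input at the moment we convert estimates into a uniform constant is the inequality $\alpha(\tilde{x}_{jk},240r_k) \leq C(n,C_M)\,\epsilon_0$, which is now immediate from (\ref{103again}) (noting $240 r_k \leq 1/10$ and $\tilde{x}_{jk} \in M \cap B_{1/10^3}(0)$). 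The compatibility conditions (\ref{74}), (\ref{75}), and (\ref{102}) follow as before after choosing $\epsilon_0$ small enough that $C_3\epsilon_0, C_4\epsilon_0 \leq \epsilon$. The derivation of (\ref{101}) likewise reduces to the same Markov-inequality argument, with $\alpha(0,240r_0) \leq C\epsilon_0$ coming directly from (\ref{103again}). Thus Theorem \ref{t2} gives the map $g$ with the stated Reifenberg-flat image and the containment $M \cap B_{1/10^3}(0) \subset g(\Sigma_0)$.

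For the bi-Lipschitz strengthening under the additional hypothesis, I would invoke Corollary \ref{cr1} and reproduce the estimate (\ref{91}) verbatim. Given $z \in \Sigma_0$, choose $\bar{z} \in M \cap B_{1/10^3}(0)$ with $|\bar{z} - f(z)| \leq 2\,d(f(z), M \cap B_{1/10^3}(0))$; using (\ref{in}) exactly as before one shows $B_{240 r_k}(\tilde{x}_{jk}) \cup B_{240 r_m}(\tilde{x}_{im}) \subset B_{r_{k-4}}(\bar{z})$, whence
\[
\alpha(\tilde{x}_{jk},240 r_k) + \alpha(\tilde{x}_{im},240 r_m) \leq C(n, C_M)\,\alpha(\bar{z}, r_{k-4}),
\]
and therefore $\epsilon'_k(f_k(z))^2 \leq C(n, C_M, C_P)\,\alpha^2(\bar{z}, r_{k-4})$. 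Summing over $k$ and using the new hypothesis $\sum_{j \geq 1} \alpha^2(\bar{z}, 10^{-j}) \leq N$ (which is exactly the place where (\ref{aagain}) was invoked via Lemma \ref{l5} in the original proof), I obtain
\[
\sum_{k \geq 0} \epsilon'_k(f_k(z))^2 \leq 1 + C(n,C_M,C_P)\,N,
\]
so Corollary \ref{cr1} yields bi-Lipschitz constant $K = K(n, C_M, C_P, N)$.

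The proof presents no genuine new obstacle; the whole point of the restatement is bookkeeping, making explicit that the Carleson integral (\ref{103}) was used only as a convenient device to ensure two independent consequences, namely the uniform smallness of $\alpha(x,r)$ on the relevant range (which feeds the CCBP construction) and the uniform summability of $\alpha^2(x,10^{-j})$ (which feeds Corollary \ref{cr1}). The main thing to check carefully is simply that each occurrence of Lemma \ref{l5} in the proof of Theorem \ref{MTT'} falls cleanly into one of these two categories, which is precisely the content of Remark \ref{cargen}.
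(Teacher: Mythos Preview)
Your proposal is correct and is exactly the approach the paper intends: Theorem \ref{MTT'again} is presented as a direct restatement justified by Remark \ref{cargen}, without a separate written proof, and you have faithfully spelled out that remark by tracing through the proof of Theorem \ref{MTT'} and replacing every invocation of Lemma \ref{l5} with the corresponding direct hypothesis. There is nothing to add.
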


\section{Quasiconvexity of $M$} \label{QCM}

In this section, we show that the  Poincar\'{e}-type inequality (\ref{eqp}) that $M$ satisfies encodes some geometric information of $M$. More precisely, consider the metric measure space $(M, d_{0}, \mu)$ , where $M \subset B_{1}(0)$ is an $n$-Ahlfors regular rectifiable set in $\mathbb{R}^{n+1}$, $\mu =$  \( \mathcal{H}^{n} \mres M\) is the Hausdorff measure restricted to $M$., and $d_{0}$ is the restriction of the standard Euclidean distance in $\mathbb{R}^{n+1}$ to $M$ (which is obviously a metric on $M$). Our goal in this section is to show that if $M$ satisfies the  Poincar\'{e}-type inequality (\ref{eqp}), then $(M,d_{0},\mu)$ is quasiconvex.\\

\begin{definition}
A metric space $(X,d)$ is $\kappa$-quasiconvex if there exists a constant $\kappa \geq 1$ such that for any two points $x$ and $y$ in $X$, there exists a rectifiable curve $\gamma$ in $X$, joining $x$ and $y$, such that $\txt{length}(\gamma) \leq \kappa \, d(x,y)$.
\end{definition}

S. Keith proved a very nice theorem in his paper paper \cite{K1}, concerning the quasiconvexity of metric measure spaces supporting Poincar\'{e}-type inequalities. We are especially interested in a specific Poincar\'e-type inequality from \cite{K1}. To state his theorem with that Poincar\'e inequality, we first need to recall the notions of a doubling measure and a \emph{local Lipschitz constant function} on a metric measure space $(X,d,\nu)$.\\

\begin{definition}
Let $(X,d, \nu)$ be a metric measure space. We say that $\nu$ is a doubling measure if there is a constant $\kappa_{0} >0$ such that 
\ben \nu\left(B^{X}_{2r}(x)\right)\leq \kappa_{0} \, \nu\left(B^{X}_{r}(x)\right), \een where $x \in X$, $r>0$, and $B_{r}^{X}(x)$ denotes the metric ball in $X$, center $x$, and radius $r$.
\end{definition}

\begin{definition} Let $f$ be a Lipschitz function on a metric measure space $(X,d,\nu)$. The local Lipschitz constant function of $f$ is defined as follows
\be \label{lip} Lipf(x) = \lim_{r \to 0} \sup_{y \in B^{X}_{r}(x), \,y \neq x} \frac{|f(y) - f(x)|}{d(y,x)}, \,\,\,\,\, x \in X,\ee
where $B_{r}^{X}(x)$ denotes the metric ball in $X$, center $x$, and radius $r$.
\end{definition}

\textbf{Notation:} Let us note here that for any Lipschitz function $f$, $LIPf$ denotes the usual Lipschitz constant (see sentence below (\ref{lip1})), whereas $Lipf(.)$ stands for the local Lipschitz constant function defined above.\\

\begin{theorem} \label{qc} (see \cite{K1}, Lemma 9)
Let $(X,d,\nu)$ be a complete metric measure space, with $\nu$ a doubling measure. Let $\mathcal{B}$ be the collection of all balls in $X$, and assume that every ball in $X$ has positive and finite measure. Moreover, assume there is a constant $\kappa_{1} \geq 1$, such that for every Lipschitz function $f$ on $X$, and for every $B \in \mathcal{B}$, we have
\be \label{fct} \dashint_{B} \left| f(y) - f_{B}\right| \, d \nu(y) \leq \kappa_{1} \, \txt{diam}(B) \left(\,\dashint_{2B}(Lipf(y))^{2} \, d \nu(y) \right)^{\frac{1}{2}}, \ee
where $f_{B} := \dashint_{B} f \, d \nu$. Then $(X,d,\nu)$ is $\kappa$-quasiconvex, with $\kappa = \kappa(\kappa_{0}, \kappa_{1})$.
\end{theorem}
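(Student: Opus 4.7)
The plan is to convert the Poincar\'e inequality (\ref{fct}) into a pointwise connectivity statement by testing it against a Lipschitz function that records an ``inner'' chain-distance, then extracting an honest rectifiable curve via compactness. For $\epsilon>0$ and $x,y\in X$, set
\[
 d_\epsilon(x,y) \;=\; \inf\left\{\sum_{i=0}^{n-1} d(x_i,x_{i+1}) \;:\; x=x_0,\dots,x_n=y,\ d(x_i,x_{i+1})<\epsilon\right\},
\]
with $d_\epsilon=+\infty$ when no $\epsilon$-chain exists. Then $d_\epsilon\ge d$ and $d_\epsilon$ is decreasing in $\epsilon$. The theorem will follow from the quantitative comparison
\begin{equation}\label{qcgoal}
 d_\epsilon(x,y)\;\le\; \kappa\, d(x,y)\qquad\text{for all }x,y\in X\text{ and all }\epsilon>0,
\end{equation}
with $\kappa=\kappa(\kappa_0,\kappa_1)$: once (\ref{qcgoal}) is established, an Arzel\`a--Ascoli argument using doubling and completeness of $(X,d)$ produces, as $\epsilon\to 0$, a limit continuous path of length $\le\kappa\,d(x,y)$ joining $x$ to $y$.

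To prove (\ref{qcgoal}), fix $x,y$, let $r=d(x,y)$, and consider the test function
\[
 u(z)=\min\bigl(d_\epsilon(x,z),\,M r\bigr),
\]
where $M$ is a large constant to be chosen. Appending a single step of length $<\epsilon$ to an optimal chain gives $d_\epsilon(x,z')\le d_\epsilon(x,z)+d(z,z')$ whenever $d(z,z')<\epsilon$, so $u$ is globally Lipschitz with $Lip\, u(z)\le 1$ at every point (on the $\epsilon$-chain component where $d_\epsilon(x,\cdot)\equiv+\infty$, $u$ is constant). Applying (\ref{fct}) to $u$ on any ball $B\subset X$ then yields
\[
 \dashint_B |u-u_B|\,d\nu\;\le\; \kappa_1\,\txt{diam}(B).
\]

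The last step is a telescoping chain-of-balls argument that upgrades this averaged bound to a pointwise comparison of $u(x)$ and $u(y)$: one selects a chain of balls $\{B_k\}$ with radii $\sim 2^{-k} r$ centered along a discrete path from $x$ to $y$, applies (\ref{fct}) on each $B_k$, and telescopes, using doubling to bound the differences $|u_{B_k}-u_{B_{k+1}}|$ by a geometric series summing to $C(\kappa_0,\kappa_1)\,r$ and to absorb the inflation from $B$ to $2B$ on the right-hand side. Since $\nu$-almost every point is a Lebesgue point of $u$, this yields $|u(y)-u(x)|\le C(\kappa_0,\kappa_1)\,r$ for $\nu$-a.e.\ pair, and hence for all $x,y$ by continuity of $u$. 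Choosing $M$ larger than this constant rules out $u(y)=M r$ and forces $d_\epsilon(x,y)\le M r$, which is (\ref{qcgoal}).

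The main obstacle is making the telescoping chain-of-balls estimate precise in the abstract metric measure setting: the chain must be constructed so that consecutive balls have comparable $\nu$-measure with uniformly bounded overlap, which requires a Vitali-type selection inside annuli around $x$ and $y$ combined with doubling, and one must carefully absorb the passage from $B$ to $2B$ on the right side of (\ref{fct}). This interplay is precisely what makes $\kappa$ depend on both the doubling constant $\kappa_0$ and the Poincar\'e constant $\kappa_1$.
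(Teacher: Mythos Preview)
The paper does not contain its own proof of this theorem: Theorem~\ref{qc} is quoted from Keith \cite{K1} (Lemma~9) and used as a black box to derive Theorem~\ref{mt}. So there is no ``paper's proof'' to compare against here; your proposal is effectively a sketch of the external result.

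That said, your outline is the standard strategy (going back to Semmes and used by Cheeger, Keith, and others), and the key idea---testing (\ref{fct}) against a truncation of the $\epsilon$-chain distance $d_\epsilon(x,\cdot)$ and exploiting $Lip\,u\le 1$---is exactly right. Two points deserve tightening. First, in the telescoping step you speak of balls ``centered along a discrete path from $x$ to $y$'', but no such path is yet available (that is precisely what you are proving); the correct chain is the usual dyadic one, namely $B(x,2^{-k}r)$ shrinking to $x$, $B(y,2^{-k}r)$ shrinking to $y$, linked through a common ball of radius comparable to $r=d(x,y)$. Doubling controls the measure ratios between consecutive balls and absorbs the $B\to 2B$ inflation, so the series is geometric and $|u(x)-u(y)|\le C(\kappa_0,\kappa_1)\,r$ follows. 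Second, the passage from the uniform bound $d_\epsilon(x,y)\le\kappa\,d(x,y)$ to an honest rectifiable curve is not literally Arzel\`a--Ascoli on curves (an $\epsilon$-chain is a finite sequence of points, and there are no segments to interpolate with in an abstract metric space); rather one invokes the standard metric-geometry fact that in a complete, proper space (complete $+$ doubling $\Rightarrow$ proper) a uniform bound on $d_\epsilon$ for all $\epsilon>0$ yields a curve of length at most that bound, via a midpoint/bisection construction and compactness. With those two clarifications your sketch is sound.
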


We want to use Theorem \ref{qc} to prove the main theorem of this section:

\begin{theorem} \label{mt}
Let $(M, d_{0}, \mu)$ be the metric measure space where $M \subset B_{1}(0)$ is $n$-Ahlfors regular rectifiable set in $\mathbb{R}^{n+1}$, $\mu =$  \( \mathcal{H}^{n} \mres M\) is the Hausdorff measure restricted to $M$, and $d_{0}$ is the restriction of the standard Euclidean distance in $\mathbb{R}^{n+1}$ to $M$. Suppose that $M$ satisfies the Poincar\'{e}-type inequality (\ref{eqp}). Then $(M, d_{0}, \mu)$ is $\kappa$-quasiconvex, with $\kappa = \kappa(n,C_{M}, C_{P})$.
\end{theorem}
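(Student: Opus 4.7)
The plan is to invoke Keith's Theorem \ref{qc} applied to the metric measure space $(M, d_{0}, \mu)$. Three of the structural hypotheses are essentially immediate from the standing assumptions: $(M, d_{0})$ is complete because $M$ is closed by the very definition of an Ahlfors regular set; $\mu$ is doubling with constant depending only on $n$ and $C_{M}$ by the two-sided bound (\ref{alfh}); and every metric ball $B_{r}^{M}(x) = M \cap B_{r}(x)$ has positive and finite $\mu$-measure by the same bound. All that remains is to verify Keith's Poincar\'e-type inequality (\ref{fct}) with a constant $\kappa_{1} = \kappa_{1}(C_{P})$ for every Lipschitz $f: (M, d_{0}) \to \mathbb{R}$ and every metric ball $B$.

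The strategy is to reduce (\ref{fct}) to the given hypothesis (\ref{eqp}). Given a Lipschitz $f$ on $M$, I would extend $f$ to a Lipschitz function $\tilde{f}: \mathbb{R}^{n+1} \to \mathbb{R}$ via the McShane extension, which preserves the Lipschitz constant, and apply (\ref{eqp}) to $\tilde{f}$ at a ball $B_{r}(x)$ centered at $x \in M$. Since $\tilde{f}|_{M} = f$, the left-hand side of (\ref{eqp}) coincides with the left-hand side of (\ref{fct}) at $B = B_{r}^{M}(x)$; moreover, Ahlfors regularity forces $\txt{diam}(B)$ and $r$ to be comparable up to a constant depending only on $n$ and $C_{M}$, so the radius factors on the two right-hand sides match up to that constant. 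The entire problem thus reduces to the pointwise comparison
\be \label{pointwisecomp}
|\nabla^{M} \tilde{f}(y)| \leq Lip f(y) \quad \text{for $\mu$-a.e.\ } y \in M,
\ee
which is what transfers the right-hand side of (\ref{eqp}) into the right-hand side of (\ref{fct}).

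I expect the verification of (\ref{pointwisecomp}) to be the principal technical obstacle. By the rectifiability of $M$ (Definition \ref{rect}) combined with Rademacher's theorem applied to $\tilde{f} \circ f_{i}$ on each Lipschitz piece $A_{i} \subset \mathbb{R}^{n}$, the tangential gradient $\nabla^{M} \tilde{f}(y)$ exists at $\mu$-a.e.\ $y \in M$, and at such a point the approximate tangent plane $T_{y}M$ also exists by Theorem \ref{atp}. Fix such a good $y$ and a unit vector $v \in T_{y}M$. The weak convergence statement in Definition \ref{ats}, applied to a smooth bump supported in a thin cone of directions around $v$ at unit distance, gives a positive lower bound on the rescaled measure of points of $M$ lying near $y + \lambda v$ as $\lambda \to 0$; in particular one can select points $y_{\lambda} \in M$ with $y_{\lambda} \to y$ and $|y_{\lambda} - (y + \lambda v)| = o(\lambda)$. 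The tangential differentiability of $\tilde{f}$ at $y$ along $v$ combined with the global Lipschitz bound on $\tilde{f}$ yields $\tilde{f}(y_{\lambda}) - \tilde{f}(y) = \lambda \langle \nabla^{M}\tilde{f}(y), v\rangle + o(\lambda)$, and since $\tilde{f}|_{M} = f$ and $|y_{\lambda} - y| = \lambda + o(\lambda)$, we get
\ben
|\langle \nabla^{M}\tilde{f}(y), v\rangle| = \lim_{\lambda \to 0} \frac{|f(y_{\lambda}) - f(y)|}{|y_{\lambda} - y|} \leq Lip f(y).
\een
Taking the supremum over unit $v \in T_{y}M$ yields (\ref{pointwisecomp}).

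Substituting (\ref{pointwisecomp}) into (\ref{eqp}) applied to the McShane extension $\tilde{f}$ produces the Keith inequality (\ref{fct}) with $\kappa_{1} = \kappa_{1}(n, C_{M}, C_{P})$, at which point Theorem \ref{qc} delivers the $\kappa$-quasiconvexity of $(M, d_{0}, \mu)$ with $\kappa = \kappa(n, C_{M}, C_{P})$, as asserted.
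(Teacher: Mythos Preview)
Your proposal is correct and follows essentially the same route as the paper. The paper likewise reduces everything to Keith's Theorem \ref{qc}, verifies the structural hypotheses via Ahlfors regularity, extends $f$ by McShane--Whitney, and isolates the pointwise bound $|\nabla^{M}\bar f|\le Lip f$ $\mu$-a.e.\ as the crux; that bound is obtained (in the paper's Lemma \ref{l} and Proposition \ref{l'}) by exactly your mechanism---using the approximate tangent plane to produce points $y_{i}\in M$ with $(y_{i}-x)/h_{i}\to v$ and then comparing the directional derivative of the extension along $v$ with the difference quotients $|f(y_{i})-f(x)|/|y_{i}-x|$. Your remark that one needs Ahlfors regularity to compare $r$ with $\mathrm{diam}(B_{r}^{M}(x))$ is in fact more careful than the paper's quoted constant $\kappa_{1}=C_{P}/2$.
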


The following lemma which is needed to prove the theorem above appears in \cite{KT} (p.379, Lemma 2.1). But for the sake of completion, we include the proof here.\\

\begin{lemma} \label{l} Let $M$ be an $n$-Ahlfors regular rectifiable subset of $\mathbb{R}^{n+1}$, and let $\mu =$  \( \mathcal{H}^{n} \mres M\) be the Hausdorff measure restricted to $M$. Let $x$ be a point in $M$ such that the approximate tangent plane $T_{x}M$ at $x$ exists. Consider a sequence $\{h_{i}\}_{i \in \mathbb{N}}$ of positive real numbers such that $ h_{i}\xrightarrow[i \to \infty]{} 0$, and for every $i \in \mathbb{N}$, let $M_{i} = \displaystyle\frac{M-x}{h_{i}}$. Then, for every $a \in T_{x}M$, there exists a sequence  $\{a_{i}\}_{i \in \mathbb{N}}$, with $a_{i} \in M_{i}$ for all $i \in \mathbb{N}$, such that $a_{i}\xrightarrow[i \to \infty]{} a$.
\end{lemma}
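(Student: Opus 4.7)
The plan is to translate the lemma into a statement about weak convergence of blow-up measures and then extract the convergent sequence by a bump-function argument. For each $i$ let $\mu_i$ denote the pushforward of $h_i^{-n}(\mathcal{H}^n\mres M)$ under the dilation $y\mapsto(y-x)/h_i$, so that $\mu_i$ is supported on $M_i$ and $\mu_i(E)=h_i^{-n}\mathcal{H}^n\!\left(M\cap(x+h_iE)\right)$ for every Borel set $E$. Unraveling Definition \ref{ats}, the hypothesis that $T_xM$ is the approximate tangent plane to $M$ at $x$ says precisely
$$\int f\,d\mu_i\xrightarrow[i\to\infty]{}\int_{T_xM}f\,d\mathcal{H}^n\qquad\txt{for every }f\in C^1_c(\mathbb{R}^{n+1}).$$

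The core step is to show that for every fixed $a\in T_xM$ and every $r>0$, the intersection $M_i\cap B_r(a)$ is non-empty for all sufficiently large $i$. I would choose a non-negative bump function $\phi\in C^1_c(\mathbb{R}^{n+1})$ with $\txt{supp}(\phi)\subset B_r(a)$ and $\phi\geq 1$ on $B_{r/2}(a)$. Because $T_xM$ is an $n$-dimensional linear subspace of $\mathbb{R}^{n+1}$ and $a\in T_xM$, the slice $T_xM\cap B_{r/2}(a)$ is an $n$-disk of radius $r/2$, so
$$\int_{T_xM}\phi\,d\mathcal{H}^n\geq\mathcal{H}^n\!\left(T_xM\cap B_{r/2}(a)\right)=\omega_n\left(\frac{r}{2}\right)^n>0.$$
Consequently $\int\phi\,d\mu_i>0$ for $i$ large, and since $\phi$ is supported in $B_r(a)$, this forces $\mu_i(B_r(a))>0$ and hence $M_i\cap B_r(a)\neq\emptyset$.

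Once this is in hand, the sequence $\{a_i\}$ is produced by a standard diagonal selection. For each $k\in\mathbb{N}$ pick $N_k\in\mathbb{N}$, with $N_k<N_{k+1}$, such that $M_i\cap B_{1/k}(a)\neq\emptyset$ for every $i\geq N_k$; for $i<N_1$ choose $a_i\in M_i$ arbitrarily, and for $i\geq N_1$ let $k(i)=\max\{k:N_k\leq i\}$ and select $a_i\in M_i\cap B_{1/k(i)}(a)$. Since $k(i)\to\infty$ as $i\to\infty$, we have $|a_i-a|\leq 1/k(i)\to 0$, as required.

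The only delicate point is the first step, because Definition \ref{ats} only provides convergence tested against $C^1_c$ functions, which is a priori weaker than weak convergence of Radon measures. Nonetheless, a single non-negative bump function whose positivity set intersects $T_xM$ in a set of positive $\mathcal{H}^n$ measure is enough to force $\mu_i$-mass into $B_r(a)$ for large $i$, and this is all that is needed to conclude that the blow-ups $M_i$ approach every point of $T_xM$.
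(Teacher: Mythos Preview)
Your proof is correct and follows essentially the same idea as the paper's: both exploit a bump function supported near $a$ together with the defining property of the approximate tangent plane to force the rescaled sets $M_i$ to meet every small ball around $a$ for large $i$. The only cosmetic difference is that the paper argues by contradiction (assuming $d(a,M_i)\not\to 0$ along a subsequence and deriving $\omega_n(\epsilon_0/4)^n=0$), and then picks $a_i$ with $|a_i-a|\leq 2\,d(a,M_i)$, whereas you argue directly and finish with a diagonal selection; the substance of the argument is identical.
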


\begin{proof}
Let $x$, $\{h_{i}\}_{i \in \mathbb{N}}$, $\{M_{i}\}_{i \in \mathbb{N}}$, and $a$ be as stated above.
We first notice that it suffices to prove that $d(a,M_{i})\xrightarrow[i \to \infty]{} 0$. In fact, if the latter is satisfied, then for every $i \in \mathbb{N}$, let $a_{i} \in M_{i}$ such that $|a_{i} - a| \leq 2\,d(a,M_{i})$. Since $|a_{i} - a| \leq 2\,d(a,M_{i})\xrightarrow[i \to \infty]{} 0$, then, our sequence $\{a_{i}\}_{i \in \mathbb{N}}$, with $a_{i} \in M_{i}$ for all $i \in \mathbb{N}$, is such that $a_{i}\xrightarrow[i \to \infty]{} a$.\\
So, we prove that $d(a,M_{i})\xrightarrow[i \to \infty]{} 0$. We proceed by contradiction. Suppose that \\ $\displaystyle\lim_{i \to \infty}d(a,M_{i}) \neq 0$. Then, there exists an $\epsilon_{0} >0$, and a subsequence $\{M_{i_{k}} \}_{k \in \mathbb{N}}$ of $\{M_{i}\}_{i \in \mathbb{N}}$, such that $d(a,M_{i_{k}}) \geq \epsilon_{0}\,\,$ for every $k \in \mathbb{N}$. Thus, 

\be \label{200} B_{\frac{\epsilon_{0}}{2}}(a) \cap M_{i_{k}} = \emptyset, \,\,\,\,\, \forall k \in \mathbb{N}. \ee

Now, let $\varphi \in C_{c}^{\infty}(\mathbb{R}^{n+1})$ be a non-negative function on $\mathbb{R}^{n+1}$, such that $\varphi = 1$ on $B_{\frac{\epsilon_{0}}{4}}(a)$ and $\varphi = 0$ on $B^{c}_{\frac{\epsilon_{0}}{2}}(a)$. By the definition of the approximate tangent plane $T_{x}M$ at $x$, we know that 

\be \label{202} \lim_{k \to \infty} \frac{1}{h^{n}_{i_{k}}} \int_{M} \varphi\left(\frac{y-x}{h_{i_{k}}}\right) \, d\mathcal{H}^{n}(y) = \int_{T_{x}M} \varphi(y) \, d \mathcal{H}^{n}(y). \ee

Let us calculate the left hand side of (\ref{202}). Fix $k \in \mathbb{N}$. Then, for $y \in M$, we have 
$ \displaystyle \frac{y-x}{h_{i_{k}}} \in M_{i_{k}}$ which by (\ref{200}) implies that $ \displaystyle \frac{y-x}{h_{i_k}} \notin  B_{\frac{\epsilon_{0}}{2}}(a)$. However, we have chosen $\varphi$ such that spt$(\varphi)  \subset  B_{\frac{\epsilon_{0}}{2}}(a)$. Hence, we get
\be \label{203} \frac{1}{h^{n}_{i_{k}}} \int_{M} \varphi\left(\frac{y-x}{h_{i_{k}}}\right) \, d\mathcal{H}^{n}(y)= 0. \ee

Since (\ref{203}) holds for all $k \in \mathbb{N}$, then by plugging (\ref{203}) in (\ref{202}), we get

\be \label{204} \int_{T_{x}M} \varphi(y) \, d \mathcal{H}^{n}(y). \ee

Now, remembering that $\varphi = 1$ on $ B_{\frac{\epsilon_{0}}{4}}(a)$ and $\varphi \geq 0$, and using (\ref{204}), we get

\ben \omega_{n} \left(\frac{\epsilon_{0}}{4}\right)^{n} = \mathcal{H}^{n}( B_{\frac{\epsilon_{0}}{4}}(a) \cap T_{x}M)
=  \int_{ B_{\frac{\epsilon_{0}}{4}}(a) \cap T_{x}M} \varphi(y) \,d \mathcal{H}^{n}(y) 
= 0. \een
This is a contradiction, and thus the proof is done
\end{proof}

Now, let us turn our focus back to proving Theorem \ref{mt}. As we mentioned before, we want to apply Theorem \ref{qc} to prove Theorem \ref{mt}. In fact, we want to apply Theorem \ref{qc} to the metric measure space $(M, d_{0}, \mu)$. To do that, we show that the hypotheses of Theorem \ref{mt} imply those of Theorem \ref{qc}. In particular, we show that the Poincar\'{e}-type inequality (\ref{eqp}) from Theorem \ref{mt} implies the Poincar\'e-type inequality (\ref{fct}) from Theorem \ref{qc}.\\

 Let $(M, d_{0}, \mu)$ be the metric measure space where $M \subset B_{1}(0)$ is $n$-Ahlfors regular rectifiable set in $\mathbb{R}^{n+1}$, $\mu =$  \( \mathcal{H}^{n} \mres M\) is the Hausdorff measure restricted to $M$, and $d_{0}$ is the restriction of the standard Euclidean distance in $\mathbb{R}^{n+1}$ to $M$. Since $M$ is a closed and bounded subset of $\mathbb{R}^{n+1}$, then $M$ is complete. Now, let $\mathcal{B}$ be the collection of all metric balls in $(M, d_{0}, \mu)$, and take $B \in \mathcal{B}$. Let $x \in M$ be the center of $B$, and $r>0$ its radius. Denote such a ball by $B_{r}^{M}(x)$. It is trivial to see that
\be \label{ba} B_{r}^{M}(x) = B_{r}(x) \cap M, \ee
where $B_{r}(x)$ is the euclidean ball in $\mathbb{R}^{n+1}$ of center $x \in M$ and radius $r >0$. 
Notice that $\mu$ is doubling since it is Ahlfors regular, and for the same reason, every ball in $M$ has positive and finite measure. Hence, we are in the setting of Theorem \ref{qc}.\\

We want to show that the Poincar\'{e}-type inequality (\ref{eqp}) implies (\ref{fct}). To do that, we compare $|\nabla^{M}f|$ and $Lipf(.)$ when both of these functions are well defined. The following proposition gives us the relation between those two latter functions.

\begin{proposition} \label{l'}
Let $(M, d_{0}, \mu)$ be the metric measure space where $M \subset B_{1}(0)$ is $n$-Ahlfors regular rectifiable set in $\mathbb{R}^{n+1}$, $\mu =$  \( \mathcal{H}^{n} \mres M\) is the Hausdorff measure restricted to $M$, and $d_{0}$ is the restriction of the standard Euclidean distance in $\mathbb{R}^{n+1}$ to $M$. Let $f$ be a Lipschitz function on $M$. Then,
\ben |\nabla^{M} \bar{f}(x)| \leq Lipf(x) \,\,\,\,\, \mu\txt{-almost every}\,\, x \in M,\een
where $\bar{f}$ is a Lipschitz extension of $f$ to the whole space $\mathbb{R}^{n+1}$, with $f = \bar{f}$ on $M$, and $LIP\bar{f} \leq LIPf$.
\end{proposition}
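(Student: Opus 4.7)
The plan is to fix a $\mu$-generic $x \in M$ where both the approximate tangent plane $T_x M$ exists (full $\mu$-measure by Theorem \ref{atp}) and $\bar f|_L$ is classically differentiable at $x$, with $L := x + T_x M$. Writing $v := \nabla^M \bar f(x)$, the case $v=0$ is trivial, so assume $v \neq 0$ and set $\hat v := v/|v| \in T_x M$. I will exhibit a sequence $y_i \to x$ in $M$ along which the difference quotient $|f(y_i)-f(x)|/d_0(y_i,x)$ converges to $|v|$, which by the very definition of the local Lipschitz constant forces $\txt{Lip}f(x) \geq |v|$.

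The approximating sequence is produced by Lemma \ref{l}: for any null sequence $h_i \downarrow 0$ and the direction $\hat v \in T_x M$, there exist $a_i \in (M-x)/h_i$ with $a_i \to \hat v$. Setting $y_i := x + h_i a_i$, one has $y_i \in M$, $y_i - x = h_i \hat v + h_i(a_i - \hat v)$ with the correction term of order $o(h_i)$, and in particular $|y_i - x| = h_i(1+o(1))$, so $y_i \to x$.

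The core step is the telescoping decomposition
\ben \bar f(y_i) - \bar f(x) = \bigl[\bar f(x + h_i \hat v) - \bar f(x)\bigr] + \bigl[\bar f(y_i) - \bar f(x + h_i \hat v)\bigr]. \een
Since $x + h_i \hat v \in L$ and $\bar f|_L$ is differentiable at $x$ with gradient $v$, the first bracket equals $h_i \langle v,\hat v\rangle + o(h_i) = h_i |v| + o(h_i)$. The second bracket is bounded by $LIP\bar f \cdot |y_i - (x + h_i \hat v)| = LIP\bar f \cdot h_i|a_i - \hat v| = o(h_i)$. Hence $\bar f(y_i) - \bar f(x) = h_i |v| + o(h_i)$, and since $\bar f|_M = f$ and $|y_i - x| = h_i(1+o(1))$, the quotient $|f(y_i)-f(x)|/d_0(y_i,x)$ tends to $|v|$. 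For every $r>0$ the sequence $y_i$ eventually lies in $B^M_r(x) \setminus \{x\}$, so taking $\limsup_i$ inside the supremum defining $\txt{Lip}f(x)$ and then letting $r \to 0$ yields $\txt{Lip}f(x) \geq |v| = |\nabla^M \bar f(x)|$, which is exactly the claim.

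The main obstacle is not the pointwise argument above but the justification that the set of $x \in M$ where both $T_x M$ exists \emph{and} $\bar f|_L$ is classically differentiable at $x$ has full $\mu$-measure. The first condition is immediate from rectifiability via Theorem \ref{atp}; the second is the delicate part. I would handle it by decomposing $M$ (up to a $\mu$-null set) into countably many $C^1$ or Lipschitz graph pieces, applying Rademacher's theorem on each piece to get $\mathcal{H}^n$-a.e.\ differentiability of $\bar f$ restricted to the piece, and using that for $\mu$-a.e.\ $x \in M$ the approximate tangent plane $T_x M$ coincides with the classical tangent plane of the Lipschitz piece containing $x$. Once this measurability/differentiability reduction is in place, the sequence-based argument above applies pointwise and completes the proof.
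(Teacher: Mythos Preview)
Your proposal is correct and follows essentially the same route as the paper: fix a $\mu$-generic point $x$ with an approximate tangent plane, take the unit direction $\hat v = \nabla^M\bar f(x)/|\nabla^M\bar f(x)|$, invoke Lemma~\ref{l} to produce $y_i\in M$ with $(y_i-x)/h_i\to\hat v$, and compare $\bar f(y_i)-\bar f(x)$ to the directional derivative via the same telescoping split, using the global Lipschitz bound to kill the correction term. The paper first proves $|\langle\nabla^M\bar f(x),\tau\rangle|\le Lipf(x)$ for an arbitrary unit $\tau\in T_xM$ and then specializes to $\tau=\hat v$, whereas you go straight to $\hat v$; this is a cosmetic difference only. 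Your explicit discussion of why $\bar f|_{x+T_xM}$ is differentiable at $\mu$-a.e.\ $x$ (via the Lipschitz-graph decomposition of $M$ and Rademacher on each piece) is a point the paper passes over by simply citing Rademacher, so your added care there is warranted rather than superfluous.
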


\begin{proof}
Let $f$ be a Lipschitz function on $M$. Note that using the metric we have on $M$, we recall from (\ref{lip}) and (\ref{ba}) that

 \be \label{lip''} Lipf(x) = \lim_{r \to 0} \sup_{y \in B_{r}(x) \cap M, \,y \neq x} \frac{|f(y) - f(x)|}{|y-x|}, \,\,\,\,\, x \in M. \ee

By the well known Mcshane-Whitney extension lemma, $f$ extends to a Lipschitz function $\bar{f}$ defined on $\mathbb{R}^{n+1}$, with $ f = \bar{f}$ on $M$, and $LIP\bar{f} \leq LIPf$. Fix $x \in M$ such that the approximate tangent plane $T_{x}M$ exists. We prove that 
\be \label{205} |\nabla^{M} \bar{f}(x)| \leq  Lipf(x). \ee
Since $M$ is rectifiable, then, by Theorem \ref{atp}, $\mu$- a.e. point in $M$ admits an approximate tangent plane. Thus, by proving (\ref{205}), we would have proved the theorem. \\
\\
Let $ \tau(x)$ be a unit vector in $T_{x}M$. We claim that

\be \label{206} |<\nabla^{M} \bar{f}(x), \tau(x)>| \leq Lipf(x). \ee 

To see this, consider a sequence $\{h_{i} \}_{i \in \mathbb{N}}$ of positive numbers, such that $h_{i}\xrightarrow[i \to \infty]{} 0$. By Rademacher's theorem, we have 

\be \label{207} \lim_{i \to \infty} \frac{| \bar{f}(x+h_{i}\tau(x)) - \bar{f}(x) - h_{i} \, <\nabla^{M} \bar{f}(x), \tau(x)>|}{h_{i}} = 0. \ee

For simplicity, let us use the notation $\epsilon_{i}$ for the quantity inside the limit in the left hand side of (\ref{207}). Thus, we get
\be \label{208}  \lim_{i \to \infty} \epsilon_{i} = 0. \ee

Now, from the definition of $\epsilon_{i}$, we have

\ben \left | \frac{| \bar{f}(x+h_{i}\tau(x)) - \bar{f}(x)|}{h_{i}} -  |<\nabla^{M} \bar{f}(x), \tau(x)>|  \right | \leq \epsilon_{i}, \quad \forall i \in \mathbb{N}, \een

\be \label{209} |<\nabla^{M} \bar{f}(x), \tau(x)>| \leq  \frac{| \bar{f}(x+h_{i}\tau(x)) - \bar{f}(x)|}{h_{i}} +  \epsilon_{i},\quad \forall i \in \mathbb{N}.\ee

Let us now focus on the first summand of (\ref{209}). We want to show that \ben \limsup_{i \to \infty} \displaystyle  \frac{| \bar{f}(x+h_{i}\tau(x)) - \bar{f}(x)|}{h_{i}} \leq Lipf(x).\een The reason why this inequality is not straight forward is that for $i \in N$, the point $x + h_{i}\tau$ is not necessarily in $M$ (recall from (\ref{lip''}), $Lipf(x)$ only considers the points $y$ that are in $M$ and do not coincide with $x$). To remedy this, we need to move the points $x + h_{i}\tau, \,\, i \in \mathbb{N}$ just a little bit, to get a sequence of points $\{y_{i}\}_{i \in \mathbb{N}}$ that (just like the sequence $\{x + h_{i} \tau\}_{i \in \mathbb{N}}$) still approaches the point $x$ and does not coincide with it, but unlike the sequence $\{x + h_{i} \tau\}_{i \in \mathbb{N}}$, lives in $M$.\\

We proceed to constructing the sequence $\{y_{i}\}_{i \in \mathbb{N}}$. Since $\tau(x) \in T_{x}M$, then by Lemma \ref{l}, there exists a sequence $\{a_{i}\}_{i \in \mathbb{N}}$, with $a_{i} \in \displaystyle \frac{M-x}{h_{i}}$ for all $i \in \mathbb{N}$, such that  $a_{i}\xrightarrow[i \to \infty]{} \tau(x)$. Writing 

\be \label{lbl11}  a_{i} = \frac{y_{i}-x}{h_{i}} \,\,\,\,\, \forall i \in \mathbb{N}, \ee
we get a sequence  $\{y_{i}\}_{i \in \mathbb{N}}$, with $y_{i} \in M$ for all $i \in \mathbb{N}$, such that 

\be \label{lbl12} \lim_{i \to \infty} \left| \frac{y_{i}-x}{h_{i}} - \tau(x) \right| = 0,\ee
that is,
\be \label{210}  \lim_{i \to \infty} \left| \frac{y_{i}-x - h_{i}\tau(x)}{h_{i}} \right| = 0. \ee

Notice that from the definition of the $a_{i}$'s in (\ref{lbl11}), and recalling that $ \displaystyle\lim_{i \to \infty} a_{i} = \tau(x)$, $\tau(x)$ is a unit vector, and $\displaystyle\lim_{i \to \infty}h_{i} = 0$,  we can easily see that 

\be \label{211}  \lim_{i \to \infty} |y_{i} - x| =  \lim_{i \to \infty} h_{i}|a_{i}| = 0. \ee

Moreover, from (\ref{lbl12}) and the fact that $\tau(x)$ is a unit vector, we have 
\be \label{212}   \lim_{i \to \infty} \left|\frac{y_{i} - x}{h_{i}} \right| =  \lim_{i \to \infty} |a_{i}| = 1. \ee

Thus, by (\ref{212}), there exits $i_{0} \in \mathbb{N}$ such that for all $i \geq i_{0}$, we have 
$ \left| y_{i} - x \right| \geq \displaystyle \frac{h_{i}}{2}$, that is $ y_{i} \neq x$, for all $i \geq i_{0}.$
However, since all the limits and inequalities from (\ref{208}) till (\ref{212}) still hold when we restrict $i$ to $i \geq i_{0}$, then without loss of generality, we can assume that 

\be \label{221} y_{i} \neq x \quad \forall \, i \in \mathbb{N}. \ee

To sum up, $\{y_{i}\}_{i \in \mathbb{N}}$ is a sequence of points in $M$ that approaches the point $x \in M,$ and does not coincide with it.\\

Now, for $i \in \mathbb{N}$, we can write

\be \label{213}  \frac{| \bar{f}(x+h_{i}\tau(x)) - \bar{f}(x)|}{h_{i}} \leq  \frac{| \bar{f}(x+h_{i}\tau(x)) - \bar{f}(y_{i})|}{h_{i}} +  \frac{| \bar{f}(y_{i}) - \bar{f}(x)|}{h_{i}}. \ee

Rewriting the first term of the right hand side of (\ref{213}) and remembering that $\bar{f}$ is Lipschitz, we have

\bes \label{214}  \frac{| \bar{f}(x+h_{i}\tau(x)) - \bar{f}(y_{i})|}{h_{i}} &=&  \frac{| \bar{f}(x+h_{i}\tau(x)) - \bar{f}(y_{i})|}{|y_{i} - x - h_{i}\tau(x)|}\,\cdot \,  \frac{|y_{i} - x - h_{i}\tau(x)|}{h_{i}} \nonumber \\
&\leq& LIP\bar{f} \, \frac{|y_{i} - x - h_{i}\tau(x)|}{h_{i}}.\ees
(note that in case $y_{i} - x - h_{i}\tau = 0$, (\ref{214}) is satisfied trivially).\\

Also by rewriting the second term of the right hand side of (\ref{213}) (using(\ref{221})), and remembering that the points $y_{i}$ and $x$ are in $M$, and that $\bar{f} = f$ on $M$, we get

\bes \label{215}  \frac{| \bar{f}(y_{i}) - \bar{f}(x)|}{h_{i}} =  \frac{| \bar{f}(y_{i}) - \bar{f}(x)|}{|y_{i} - x|} \cdot \frac{|y_{i}-x|}{h_{i}} =  \frac{| f(y_{i}) - f(x)|}{|y_{i} - x|} \cdot \frac{|y_{i}-x|}{h_{i}}. \ees

Plugging (\ref{214}) and (\ref{215}) in (\ref{213}), we get 

\be \label{216}  \frac{| \bar{f}(x+h_{i}\tau(x)) - \bar{f}(x)|}{h_{i}} \leq  LIP\bar{f} \, \frac{|y_{i} - x - h_{i}\tau(x)|}{h_{i}} +  \frac{| f(y_{i}) - f(x)|}{|y_{i} - x|} \cdot \frac{|y_{i}-x|}{h_{i}}. \ee

Since (\ref{216}) holds for all $i \in \mathbb{N}$, then by taking the $\displaystyle \limsup_{i \to \infty}$ on both sides of (\ref{216}), we get using (\ref{210}) and (\ref{212}) that

\be \label{218}  \limsup_{i \to \infty} \frac{| \bar{f}(x+h_{i}\tau(x)) - \bar{f}(x)|}{h_{i}} \leq  \limsup_{i \to \infty}\frac{| f(y_{i}) - f(x)|}{|y_{i} - x|}. \ee

But, using (\ref{211}), (\ref{221}), and remembering that $y_{i} \in M$, it is easy to check that 

\be \label{219}  \limsup_{i \to \infty}\frac{| f(y_{i}) - f(x)|}{|y_{i} - x|} \leq Lipf(x). \ee

Thus, plugging (\ref{219}) back in (\ref{218}), we get 

\be \label{neww}  \limsup_{i \to \infty} \frac{| \bar{f}(x+h_{i}\tau(x)) - \bar{f}(x)|}{h_{i}} \leq Lipf(x). \ee 

Finally, taking $\displaystyle \limsup_{i \to \infty}$ on both sides of (\ref{209}), and using (\ref{neww}) and (\ref{208}), we get

\ben |<\nabla^{M} \bar{f}(x), \tau(x)>| \leq Lipf(x), \een
hence finishing the proof of (\ref{206}). (\ref{205}) follows directly from (\ref{206}) after plugging in  $\displaystyle \frac{\nabla^{M}\bar{f}(x)}{|\nabla^{M}\bar{f}(x)|}$ for $\tau(x)$ (the case when $\nabla^{M} \bar{f}(x) = 0$ is trivial). Proving (\ref{205}), as mentioned earlier, finishes the proof of the proposition
\end{proof}

\begin{corollary} \label{cor}
Let $(M, d_{0}, \mu)$ be the metric measure space where $M \subset B_{1}(0)$ is $n$-Ahlfors regular rectifiable set in $\mathbb{R}^{n+1}$, $\mu =$  \( \mathcal{H}^{n} \mres M\) is the Hausdorff measure restricted to $M$, and $d_{0}$ is the restriction of the standard Euclidean distance in $\mathbb{R}^{n+1}$ to $M$. Assume that $M$ satisfies the Poincar\'{e}-type inequality (\ref{eqp}). Let $f$ be a Lipschitz function on $M$. Then, for every $x \in M$, and radius $r >0$, we have 
\be \label{FOP2} \dashint_{B^{M}_{r}(x)} \left| f(y) -f_{B^{M}_{r}(x)} \right| \, d \mu(y) \leq C_{P} \, r \, \left(\dashint_{B^{M}_{2r}(x)}(Lipf(y))^{2} \, d \mu(y) \right)^{\frac{1}{2}}. \ee
\end{corollary}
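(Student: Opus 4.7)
The plan is to reduce the statement for a Lipschitz function $f$ on $M$ to the ambient Poincar\'e-type inequality (\ref{eqp}) via the McShane--Whitney extension, and then to replace the tangential gradient of the extension by $\mathrm{Lip}\,f$ on $M$ using Proposition \ref{l'}. This is a short argument; the only things to keep track of are that (a) $\mu$ is supported on $M$, so averages against $\mu$ over Euclidean balls agree with averages over the metric balls $B^M_r(x) = B_r(x)\cap M$, and (b) the Lipschitz extension need not satisfy $|\nabla^M\bar f| \le \mathrm{Lip}\,f$ pointwise everywhere, but only $\mu$-a.e.\ on $M$, which is exactly what the integral inequality requires.

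First, I would let $\bar f \colon \mathbb{R}^{n+1}\to\mathbb{R}$ be the McShane--Whitney extension of $f$, so that $\bar f = f$ on $M$ and $\mathrm{LIP}\,\bar f \le \mathrm{LIP}\,f$; in particular $\bar f$ is (globally) Lipschitz on $\mathbb{R}^{n+1}$, hence locally Lipschitz, so the hypothesis (\ref{eqp}) applies to it. Next, since $\mu = \mathcal{H}^n \mres M$, for every $x\in M$ and $r>0$ one has
\ben
\dashint_{B_r(x)} \bar f\,d\mu = \dashint_{B^M_r(x)} f\,d\mu = f_{B^M_r(x)},
\een
and likewise $\displaystyle \dashint_{B_r(x)}|\bar f - \bar f_{x,r}|\,d\mu = \dashint_{B^M_r(x)}|f - f_{B^M_r(x)}|\,d\mu$, and the right-hand side integral $\displaystyle \dashint_{B_{2r}(x)}|\nabla^M\bar f|^2\,d\mu = \dashint_{B^M_{2r}(x)}|\nabla^M\bar f|^2\,d\mu$.

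Applying the Poincar\'e-type inequality (\ref{eqp}) to $\bar f$ on $B_r(x)$, these identifications give
\ben
\dashint_{B^M_r(x)}|f - f_{B^M_r(x)}|\,d\mu \;\le\; C_P\, r\,\Bigl(\dashint_{B^M_{2r}(x)} |\nabla^M\bar f(y)|^2\,d\mu(y)\Bigr)^{1/2}.
\een
By Proposition \ref{l'}, $|\nabla^M\bar f(y)| \le \mathrm{Lip}\,f(y)$ for $\mu$-a.e.\ $y\in M$, so the integral on the right is bounded by $\displaystyle \dashint_{B^M_{2r}(x)}(\mathrm{Lip}\,f(y))^2\,d\mu(y)$, yielding (\ref{FOP2}).

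There is essentially no obstacle here: the extension step is standard, the coincidence of averages is just the fact that $\mu$ lives on $M$, and the comparison between $|\nabla^M\bar f|$ and $\mathrm{Lip}\,f$ is precisely the content of Proposition \ref{l'}. The only subtle point worth flagging is that a different choice of Lipschitz extension $\bar f$ could produce a larger $|\nabla^M\bar f|$ pointwise, but Proposition \ref{l'} bounds $|\nabla^M\bar f|$ by the \emph{intrinsic} quantity $\mathrm{Lip}\,f$ at $\mu$-a.e.\ point, so the conclusion is independent of the extension chosen.
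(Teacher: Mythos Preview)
Your proof is correct and follows essentially the same approach as the paper: extend $f$ to $\bar f$ via McShane--Whitney, apply the ambient Poincar\'e inequality (\ref{eqp}) to $\bar f$, and then use Proposition \ref{l'} together with the identification $B^M_r(x)=B_r(x)\cap M$ to pass from $|\nabla^M\bar f|$ to $\mathrm{Lip}\,f$. The paper's argument is identical in structure, and your additional remarks about $\mu$-a.e.\ versus everywhere and independence of the choice of extension are accurate clarifications.
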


\begin{proof}
Let $f$, $x$, and $r$ be as described above. Since $f$ is Lipschitz on $M$, we can extend it to a Lipschitz function $\bar{f}$ defined on $\mathbb{R}^{n+1}$, with $ f = \bar{f}$ on $M$, and $LIP\bar{f} \leq LIPf$. By construction, $\bar{f}$ is Lipschitz and thus locally Lipschitz on $\mathbb{R}^{n+1}$. Thus, we can apply the  Poincar\'{e}-type inequality (\ref{eqp}) to $\bar{f}$ at the point $x$ and radius $r$ to get 

\be \label{222} \dashint_{B_{r}(x)} \left| \bar{f}(y) - \bar{f}_{x,r} \right| \, d \mu(y) \leq C_{P} \, r \, \left(\,\dashint_{B_{2r}(x)}|\nabla^{M}\bar{f}(y)|^{2} \, d \mu(y) \right)^{\frac{1}{2}}. \ee

Using the fact that $\bar{f} = f$ on $M$ for the left hand side of (\ref{222}), and Proposition \ref{l'} for the right hand side of (\ref{222}), the latter becomes 

\be \label{cst}  \dashint_{B_{r}(x)} \left| f(y) -f_{x,r} \right| \, d \mu(y) \leq C_{P} \, r \, \left(\,\dashint_{B_{2r}(x)}(Lipf(y))^{2} \, d \mu(y) \right)^{\frac{1}{2}}. \ee

\noindent Hence, (\ref{FOP2}) follows directly from (\ref{cst}), (\ref{ba}), and the fact that $\mu =$  \( \mathcal{H}^{n} \mres M\)
\end{proof}

We are finally ready to put the pieces together and prove Theorem \ref{mt}: \\

\underline{\textbf{\textit{Proof of Theorem \ref{mt}:}}}
\begin{proof} 
We have already argued that $(M,d_{0},\mu)$ is a complete metric measure space, with $d_{0}$ being the restriction of the standard Euclidean distance in $\mathbb{R}^{n+1}$ to $M$,  and $\mu =$  \( \mathcal{H}^{n} \mres M\). We have also already shown that $\mu$ is a doubling measure with $\kappa_{0}= C(n, C_{M})$ and that the measure of every ball in $M$ is positive and finite. Moreover, by Corollary \ref{cor}, we have that $M$ satisfies the Poincar\'e-type inequality (\ref{fct}) with $\kappa_{1} = \displaystyle \frac{C_{P}}{2}$. Hence, by applying Theorem \ref{qc} to the metric measure space $(M, d_{0}, \mu)$, we get that  $(M, d_{0}, \mu)$ is $\kappa$-quasiconvex, with $\kappa = \kappa(n, C_{M}, C_{P})$, and the proof is done
\end{proof}

\begin{remark} \label{equiv}
As was kindly pointed out by the referee, it is very interesting here to study the connection between the Poincar\'e inequality (\ref{eqp}) used in this paper and other Poincar\'e-type inequalities found in literature that imply quasiconvexity (see for example \cite{Ch}, \cite{DJS}, \cite{K1} \cite{KM}). Apriori, these Poincar\'e-type inequalilties are different from eachother because the right hand side varies according to the notion of ``derivative" used on the metric space. However, it has been shown (see \cite{K1}, \cite{K2}, \cite{KM}) that if $(X, d, \mu)$ is a complete metric measure space with $\mu$ a doubling measure, and such that every ball has positive and finite measure, then all these Poincar\'e inequalities found in \cite{Ch}, \cite{DJS}, \cite{K1}, and \cite{KM} are equivalent. It turns out that this is also true for the Poincar\'e type inequality (\ref{eqp}). It is shown by the author in \cite{M} that for a Lipschitz function $f$ on $M$, $|\nabla^{M}\bar{f}|$ agrees $\mu$-almost everywhere with an upper gradient of $f$, where $\bar{f}$ is any extension of $f$ to a Lipschitz function on $\mathbb{R}^{n+1}$. This is the key point to showing that (\ref{eqp}) is indeed equivalent the other Poincar\'e-type inequalities mentioned above.\end{remark}

\begin{remark}
By Remark \ref{equiv} above, notice that Theorem \ref{MTT'} still holds if one replaces the Poincar\'e inequality (\ref{eqp}) with any other notion of the Poincar\'e inequality found in the references mentioned above. The reason why we prefer stating Theorem \ref{MTT'} with (\ref{eqp}) is that on a recitifiable set $M$, the tangential gradient is the obvious choice for a notion of derivative. Moreover, one can find several Sobolev and Poincar\'e-type inequalities on rectifiable sets that have the tangential gradient on the right hand side of the inequality (see \cite{Si}, \cite{H}, \cite{Se3}). In fact, as mentioned earlier in the introduction, Semmes has proved in \cite{Se3} that the Poincar\'{e}-type inequality (\ref{eqp}) is satisfied by chord-arc surfaces with small constant (CASSC).

\end{remark}

\begin{ack}
The author would like to thank T. Toro for her supervision, direction, and numerous insights into the subject of this project. The author would also like to thank Max Engelstein for the insightful discussions about \cite{DT1}. Last but not least, the author thanks the anonymous referee for his/her thorough reading of this manuscript, and his/her extremely helpful observations and comments.
\end{ack}

\bibliography{6161}{}
\bibliographystyle{amsalpha}

\end{document}